\documentclass[12pt,a4paper]{amsart}
\usepackage[foot]{amsaddr}

\makeatletter

\usepackage[english]{babel}
\usepackage[utf8]{inputenc}
\usepackage{csquotes}
\usepackage[T1]{fontenc}
\usepackage{latexsym}
\usepackage[leqno]{amsmath}
\usepackage{amssymb,amsthm,amsfonts}
\usepackage{mathtools}
\usepackage{mathrsfs}
\usepackage{dsfont}
\usepackage{stmaryrd}

\usepackage{todonotes}

\usepackage{geometry}
\usepackage{nicefrac}

\usepackage{enumerate}
\usepackage[shortlabels]{enumitem}
\setlist[enumerate]{label=(\alph*),font=\normalshape}
\setlist[itemize]{font=\normalshape}
\let\originalitem\item
\renewcommand{\item}[1][]{%
	\if\relax\detokenize{#1}\relax%
		\originalitem%
	\else%
		\originalitem[#1]%
		\phantomsection
		\def\@currentlabel{#1}
	\fi%
}

\usepackage{hyperref}

\usepackage[noabbrev,capitalize]{cleveref}

\usepackage[style=numeric-comp]{biblatex}

\let\originalleft\left
\let\originalright\right
\renewcommand{\left}{\mathopen{}\mathclose\bgroup\originalleft}
\renewcommand{\right}{\aftergroup\egroup\originalright}

\allowdisplaybreaks
\setlength{\topmargin}{-15mm}
\setlength{\parindent}{0pt}
\setlength{\parskip}{2ex plus 1ex minus 0.5ex}

\geometry{left=2cm, right=2cm, bottom=3cm}

\theoremstyle{plain}
\newtheorem{theorem}{Theorem}[section]
\newtheorem{definition}[theorem]{Definition}
\newtheorem{proposition}[theorem]{Proposition} 
\newtheorem{lemma}[theorem]{Lemma} 

\newtheorem{notation}[theorem]{Notation}

\theoremstyle{definition}
\newtheorem{remark}[theorem]{Remark}

\AddToHook{env/definition/begin}{\crefalias{theorem}{definition}}
\AddToHook{env/proposition/begin}{\crefalias{theorem}{proposition}}
\AddToHook{env/lemma/begin}{\crefalias{theorem}{lemma}}
\AddToHook{env/corollary/begin}{\crefalias{theorem}{corollary}}
\AddToHook{env/notation/begin}{\crefalias{theorem}{notation}}
\AddToHook{env/remark/begin}{\crefalias{theorem}{remark}}

\newcommand{\C}{\mathbb{C}} 
\newcommand{\R}{\mathbb{R}} 
 
\newcommand{\Z}{\mathbb{Z}} 
\newcommand{\Zodd}{\mathbb{Z}_\mathrm{odd}}

\newcommand{\N}{\mathbb{N}} 
\newcommand{\Nodd}{\mathbb{N}_\mathrm{odd}}
\newcommand{\Neven}{\mathbb{N}_\mathrm{even}}
\newcommand{\T}{\mathbb{T}}

\newcommand{\id}{I}

\DeclareMathAlphabet{\othermathbb}{U}{bbold}{m}{n}
\newcommand{\bbone}{\othermathbb{1}}

\newcommand{\der}[2][]{\@ifnextchar\der{\,#1\mathrm{d}#2\!}{\,#1\mathrm{d}#2}}
\newcommand{\Der}{\mathrm{d}}
\DeclareMathOperator{\dist}{dist}
\DeclareMathOperator{\supp}{supp}
\DeclareMathOperator{\lspan}{span}

\DeclareMathOperator*{\essinf}{ess~inf}
\DeclareMathOperator{\tr}{tr}
\DeclareMathOperator{\Id}{Id}
\renewcommand{\Re}{\operatorname{Re}}
\renewcommand{\Im}{\operatorname{Im}}

\newcommand{\landauO}{\mathcal{O}}
\newcommand{\landauo}{\hbox{o}}
\newcommand{\ee}{\mathrm{e}}
\newcommand{\ii}{\mathrm{i}}
\let\eps\varepsilon
\let\embeds\hookrightarrow

\newcommand{\set}[1]{{\left\{ #1 \right\}}}
\newcommand{\Set}[1]{\bigl\{ #1 \bigr\}}
\newcommand{\abs}[1]{\left\lvert #1 \right\rvert}
\newcommand{\Abs}[1]{\bigl\lvert #1 \bigr\rvert}
\newcommand{\norm}[1]{\left\lVert #1 \right\rVert}
\newcommand{\floor}[1]{\left\lfloor #1 \right\rfloor}
\newcommand{\ceil}[1]{\left\lceil #1 \right\rceil}

\newcommand{\impvar}{\,\cdot\,}
\newcommand{\ip}[2]{\left\langle #1 , #2 \right\rangle}
\newcommand{\dv}[3][]{%
	\if\relax\detokenize{#1}\relax%
		\frac{\mathrm{d} #2}{\mathrm{d} #3}%
	\else%
		\frac{\mathrm{d}^{#1} #2}{\mathrm{d} {#3}^{#1}}%
	\fi%
}
\newcommand{\dx}{\dv{}{x}}
\newcommand{\dxsquare}{\dv[2]{}{x}}
\newcommand{\loc}{\mathrm{loc}}
\newcommand{\per}{\mathrm{per}}
\newcommand{\ess}{\mathrm{ess}}
\DeclareMathOperator*{\Var}{Var}
\DeclareMathOperator{\lcm}{lcm}

\newcommand{\calH}{\mathcal{H}}
\newcommand{\calL}{\mathcal{L}}
\newcommand{\calM}{\mathcal{M}}

\renewcommand{\H}{\mathbb{H}}

\renewcommand{\S}{\mathbb{S}}
\newcommand{\SwR}{\mathbb{S}_*}

\makeatother

\bibliography{bibliography}

\begin{document}
	\title[Breather solutions for semilinear wave equations]{Breather solutions for semilinear wave equations}

	\author{Julia Henninger$^1$}
	\email{julia.henninger@kit.edu}
	\author{Sebastian Ohrem$^1$}
	\email{sebastian.ohrem@kit.edu}
	\author{Wolfgang Reichel$^1$}
	\email{wolfgang.reichel@kit.edu}
	\address{$^1$Institute for Analysis, Karlsruhe Institute of Technology (KIT), D-76128 Karlsruhe, Germany}
	
	\date{\today}
	\subjclass[2020]{Primary: 35L71, 49J35; Secondary: 35B10, 34L05}
	\keywords{semilinear wave equation, breather solutions, time-periodic solutions, variational methods, functional calculus, spectral measure}

	\begin{abstract}
We prove existence of real-valued, time-periodic and spatially localized solutions (breathers) of semilinear wave equations $V(x)u_{tt} - u_{xx} = \Gamma(x) |u|^{p-1} u$ on $\R^2$ for all values of $p\in (1,\infty)$. Using tools from the calculus of variations our main result provides breathers as ground states of an indefinite functional under suitable conditions on $V, \Gamma$ beyond the limitations of pure $x$-periodicity. Such an approach requires a detailed analysis of the wave operator acting on time-periodic functions. Hence a generalization of the Floquet-Bloch theory for periodic Sturm-Liouville operators is needed which applies to perturbed periodic operators.  For this purpose we develop a suitable functional calculus for the weighted operator $-\frac{1}{V(x)}\frac{\mathrm{d}^2}{\mathrm{d}x^2}$ with an explicit control of its spectral measure. Based on this we prove embedding theorems from the form domain of the wave operator into $L^q$-spaces, which is key to controlling nonlinearities. We complement our existence theory with  explicit examples of coefficient functions $V$ and temporal periods $T$ which support breathers. 
 
	\end{abstract}

	\maketitle


\section{Introduction and main results}

We are interested in breather solutions for the spatially heterogeneous semilinear wave equation 
\begin{equation}
    V(x)u_{tt} - u_{xx} = \Gamma(x) |u|^{p-1} u \mbox{ for } (x,t)\in\R^2 \label{eq:basic}
\end{equation}
for $p>1$. Breathers are real-valued, periodic in $t$ and localized in $x$, i.e., $\lim_{|x|\to\infty} u(x,t)=0$. Our goal is to give sufficient conditions on $V$ and $\Gamma$ so that non-trivial breathers of \eqref{eq:basic} exist. 
 
Breather solutions to \eqref{eq:basic} are a rare phenomenon in the sense that only few examples of coefficient classes $V(x)$ are known which support breathers -- a more detailed account of the literature will be given at the end of the introduction. 
The results known to us so far \cite{BlaSchneiChiril, hirschreichel, Maier, maier_reichel_schneider} and very recently \cite{martina_panos} all rely on spatially periodic coefficients $V_\per$ and on having very good information on the spectrum of the spatial operator $L=-\frac{1}{V_\per(x)}\dxsquare$. The main tool to describe periodic differential operators is Floquet-Bloch theory, which provides an explicit description of spectrum and spectral measure in terms of quasiperiodic eigenvalues and eigenfunctions on the periodicity cell. The basic idea in the approaches cited above is then to tailor $V_\per$ in such a way that $L$ has spectral gaps about $k^2 \omega^2$ for $k\in \Nodd$, with $\omega$ being the temporal frequency of the breather, e.g. by a careful design of a piecewise constant $V_\per$. As a consequence, the linear wave operator $V(x)\partial_t^2-\partial_x^2$ is invertible in suitable spaces of $T=\frac{2\pi}{\omega}$-periodic functions. 
In principle, this approach is not limited to spatially periodic coefficients $V_\per(x)$, but as soon as one leaves this class, several difficulties arise: 
\begin{itemize}
\item[(a)] finding a way to describe the spectrum of $L$ and to keep $k^2\omega^2$ out of it,
\item[(b)] finding a replacement for the Bloch-transform which diagonalizes the operator $L$. 
\end{itemize} 
In this paper, we overcome the limitations of spatially periodic coefficients to some extent and show existence of breathers to \eqref{eq:basic} for different kinds of perturbed periodic $V$. A prototypical case where our results apply is a perturbation $V=V_\per+V_\loc$ of a positive periodic potential $V_\per$ where $V_\loc$ has compact support. Concrete examples beyond the purely periodic setting are also provided.

As an essential tool to find breathers in the nonperiodic setting and in extension of \cite{coddington}, we show an explicit formula for the spectral measure of the perturbed periodic operator $L = - \frac{1}{V(x)} \dxsquare$, cf. \cref{thm:spectral_measure:explicit}. The spectral description uses generalized eigenvalues and eigenfunctions replacing the quasiperiodic Bloch-eigenfunctions, and their interaction in the perturbed region. Once this is available we use the calculus of variations  to find breathers of \eqref{eq:basic} as critical points of a functional $J$. We show that $J$ is well defined in the Hilbert space $\mathcal{H}=\{u\in L^2(\R\times\T): \int_{\R\times\T} u|\Box|u V(x)\der[] (x,t)<\infty\}$ where $\Box\coloneqq \partial_t^2 - \frac{1}{V(x)} \partial_x^2$ is the weighted wave operator and $|\Box|$ its spectrally defined absolute value. To achieve this, we show embedding properties of $\mathcal{H}$ into $L^q(\R\times\T)$-spaces using the functional calculus for $L$. Then, saddle-point tools from the calculus of variations \cite{szulkin_weth} provide breathers for \eqref{eq:basic} as ground states of $J$. 

With this brief introduction we can now describe our main result. We consider potentials $V$ which coincide outside a finite interval with a periodic function $V_\per^+$ in a neighborhood of $+\infty$ and $V_\per^-$ in a neighborhood of $-\infty$. This also includes the case where $V$ is a purely periodic function. We use the following basic assumptions. To improve readability, we use the $\pm$ symbol, and statements involving double symbols should be read choosing always the top symbol, or always the bottom symbol. 
\begin{enumerate}
    \item[(A1)]
	$V, \Gamma \in L^\infty(\R)$ with $\essinf_\R V > 0$ and $\Gamma > 0$ almost everywhere. Moreover, $V$ has locally bounded variation.
	\label{ass:bounded} \label{ass:first}
    \item[(A2)] There exist $X^\pm>0$, $R^\pm\in \R$, and $X^\pm$-periodic functions $V_\per^\pm\in L^\infty(\R)$ such that $V(x)=V^+_\per(x)$ for $x>R^+$ and $V(x)=V^-_\per(x)$ for $x<R^-$.  \label{ass:periodic_infinity}
\end{enumerate}

The next two assumptions concern the spectrum of the weighted Sturm-Liouville operator $L$. If we consider the $\frac{1}{V(x)}$-weighted wave operator $\Box =\partial_t^2 - \frac{1}{V(x)} \partial_x^2$ applied to a time periodic function $u$, then a Fourier-decomposition of $u=\sum_{k\in \Z} \hat u_k(x) \ee^{\ii k\omega t}$ results into a Fourier-decomposition of $\Box$ into a family $(L_k)_{k\in \Z}$ of Sturm-Liouville operators
\begin{align}\label{eq:def:Lk}
	L_k\coloneqq-\frac{1}{V(x)}\dxsquare - k^2\omega^2,	
\end{align}
where $\omega=\frac{2\pi}{T}$ is the frequency and $T$ is the time period of the breather. As we shall see later, it is advantageous to consider only $k\in\Zodd$ which amounts to a restriction to $\frac{T}{2}$-antiperiodic functions $u$. 
Since $L_k = L_{-k}$, the main assumption is now to keep $0$ out of the spectrum of the family $(L_k)_{k\in\Nodd}$ or, in other words, to keep $k^2\omega^2$ out of the spectrum $\sigma(L)$ of the weighted Sturm-Liouville operator $L = -\frac{1}{V(x)}\dxsquare$. In fact, a stronger assumption is needed as follows.

\begin{enumerate}
    \item[(A3)] There exists $\omega>0$ such that $\inf\{|\sqrt{\lambda} - k\omega|: \lambda\in \sigma(L), k\in \Nodd\}>0$. 
	\label{ass:spectrum}

	\item[(A4)] 
	The point spectrum $\sigma_p(L)$ satisfies $\sum_{\lambda \in \sigma_p(L)} \lambda^{-r} < \infty$ for all $r > \frac12$, which means that $\sigma_p(L)$ grows at least quadratically or is finite.
	\label{ass:point_spectrum}
	\label{ass:last}
\end{enumerate}

The restriction to $k\in\Nodd$ in \ref{ass:spectrum}, or in other words the restriction to $\frac{T}{2}$-antiperiodic functions, avoids the $k=0$ mode in the Fourier decomposition which would otherwise be incompatible with \ref{ass:spectrum}. 
With these assumptions we can now formulate our main theorem. We use the notation $\T \coloneqq \R \slash_{T \Z}$ for the one-dimensional torus of length $T$.

\begin{theorem} \label{thm:main} Let $1<p<\infty$ and assume \ref{ass:bounded}, \ref{ass:periodic_infinity}, \ref{ass:spectrum}, \ref{ass:point_spectrum}. Then \eqref{eq:basic} has infinitely many nontrivial $T=\frac{2\pi}{\omega}$-periodic breathers if additionally one of the following assumptions are satisfied.
\begin{enumerate}
    \item[(a)] (compact case) $\lim_{|x|\to \infty} \Gamma(x)=0$.
    \item[(b)] (asymptotically periodic case) $V=V_\per$ is $X$-periodic on $\R$ and $\Gamma=\Gamma_\per+\Gamma_\loc$ where $\Gamma_\per$ is $X$-periodic and $\Gamma_\loc\geq 0$, $\lim_{|x|\to\infty} \Gamma_\loc(x)=0$.
\end{enumerate}
The solutions are strong $H^2(\R\times\T)$-solutions and satisfy \eqref{eq:basic} pointwise almost everywhere. 
The theorem also holds if we replace $\Gamma$ by $-\Gamma$ in \eqref{eq:basic}.
\end{theorem}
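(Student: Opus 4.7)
The plan is to realize breathers as critical points of the indefinite energy functional
\begin{equation*}
J(u) = \frac12 \int_{\R\times\T} u \, \Box u \, V(x) \, \mathrm{d}(x,t) \;-\; \frac{1}{p+1} \int_{\R\times\T} \Gamma(x) |u|^{p+1} \, \mathrm{d}(x,t),
\end{equation*}
defined on the subspace of $\frac{T}{2}$-antiperiodic functions in $\calH$, which isolates the Fourier modes $k\in\Zodd$ as required by \ref{ass:spectrum}. The functional calculus for $\Box$ yields a spectral decomposition $\calH = \calH^+ \oplus \calH^-$ along which $\Box = |\Box|(P_+ - P_-)$, so that the quadratic part of $J$ becomes $\tfrac12(\norm{u^+}_\calH^2 - \norm{u^-}_\calH^2)$; both subspaces are infinite-dimensional thanks to the spectral description of $L$. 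The embedding $\calH \embeds L^{p+1}(\R\times\T)$ announced in the introduction, which follows from the explicit spectral measure of $L$ combined with \ref{ass:spectrum} and \ref{ass:point_spectrum}, ensures that the nonlinear term is of class $C^1$, with derivative sequentially weak-to-strong continuous on bounded sets modulo translation.

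With this setup I would apply the generalized Nehari manifold framework of Szulkin--Weth~\cite{szulkin_weth} to the strongly indefinite functional $J$. One verifies the standard abstract requirements: for each $u\in\calH^+\setminus\{0\}$ the restriction $J|_{\R^+u\oplus\calH^-}$ attains a unique strict positive maximum at a point $m(u)$ (using the strict convexity of $s\mapsto |s|^{p+1}$ for $p>1$); the ground state level $c=\inf_\calN J$ is positive, where $\calN$ is the set of $u\in\calH\setminus\calH^-$ satisfying $J'(u)|_{\R u \oplus \calH^-}=0$; and Palais--Smale sequences at level $c$ are bounded by the superquadratic structure of the nonlinearity.

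The main obstacle is the compactness of Palais--Smale sequences, since \eqref{eq:basic} is translation-invariant at infinity. In case (a), the decay $\Gamma(x)\to 0$ as $|x|\to\infty$ together with local compactness of $\calH\embeds L^{p+1}_\loc$ upgrades weak convergence of a bounded (PS)-sequence to strong convergence of the weighted nonlinear term $\Gamma|u_n|^{p-1}u_n$ in the dual of $\calH$, yielding (PS)-compactness directly. In case (b) the full problem is $X\Z$-translation-invariant at infinity, so mass may escape to $\pm\infty$. Here I would employ a Lions-type concentration-compactness / splitting lemma tailored to the weighted operator $L$: a bounded (PS)-sequence either converges up to a sequence of integer $X$-translations, or decomposes into a sum of profiles each solving the purely periodic limit problem with coefficients $V_\per$, $\Gamma_\per$. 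Since $\Gamma_\loc\geq 0$ breaks ties in favor of concentration at the perturbation, the ground state level $c$ is no larger than---and by a strict-inequality comparison when $\Gamma_\loc\not\equiv 0$ dominated by---the level of the periodic limit problem, and a standard Brezis--Lieb argument then yields a nontrivial minimizer $u\in\calN$. The purely periodic sub-case of (b) with $\Gamma_\loc\equiv 0$ is handled by solving on $\calN$ up to translations, which is the classical setting of \cite{szulkin_weth}.

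Finally, the $\Z_2$-symmetry $J(-u) = J(u)$ inherent to the odd nonlinearity allows one to invoke the multiplicity part of~\cite{szulkin_weth} (via a Krasnoselskii-type genus argument or a pseudogradient flow on the symmetric Nehari manifold), producing an unbounded sequence of critical values and hence infinitely many geometrically distinct critical points. Regularity is obtained by a bootstrap: from $u\in\calH$ and the embedding $\calH\embeds L^{p+1}(\R\times\T)$ one deduces $\Gamma|u|^{p-1}u \in L^2(\R\times\T)$, whence $\Box u \in L^2(\R\times\T)$; the functional calculus for $\Box$ combined with the $H^2$-regularity of the one-dimensional Sturm--Liouville operator $L$ in $x$ and Fourier smoothness in $t$ then promotes $u$ to $H^2(\R\times\T)$, so that \eqref{eq:basic} holds pointwise almost everywhere. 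The sign-flipped case, replacing $\Gamma$ by $-\Gamma$, follows by applying the argument to $-J$ with the roles of $\calH^+$ and $\calH^-$ exchanged.
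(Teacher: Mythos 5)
Your existence argument follows essentially the same route as the paper (Szulkin--Weth generalized Nehari--Pankov manifold, complete continuity of the nonlinearity in case (a), concentration-compactness plus a comparison with the periodic ground-state level $c^\per$ in case (b)), but two steps would not close as written. First, in the perturbed periodic case you assert that $\Gamma_\loc \not\equiv 0$ forces the strict inequality $c < c^\per$. That is not justified: mapping the periodic minimizer $u^\per$ into $\mathcal{M}$ via the map $m$ only gives $c \leq J(m(u^\per)) \leq J^\per(m(u^\per)) \leq J^\per(u^\per) = c^\per$, and strictness would require knowing that $m(u^\per)$ does not vanish identically on the support of $\Gamma_\loc$, over which you have no control. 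The paper instead treats the possible equality $c = c^\per$ separately: equality along the chain forces $\Gamma_\loc\, m(u^\per) = 0$ and $m(u^\per) = u^\per$, so the periodic minimizer is then itself a minimizer and critical point of $J$; only when $c < c^\per$ is the escape-to-infinity scenario excluded by the contradiction argument you sketch. You need this case distinction (or an actual proof of strictness) for your case (b) argument to be complete.

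Second, and more seriously, your multiplicity claim rests on the $\Z_2$-symmetry together with a genus/pseudogradient scheme on the Nehari manifold, which in \cite{szulkin_weth} requires the Palais--Smale condition (condition (iii) of their Theorem~35). That condition holds in case (a) by the complete continuity of $J_1'$, but it is not available in case (b): in the purely periodic subcase translation invariance destroys (PS), and in the perturbed periodic subcase neither (PS) nor translation invariance holds, so neither the genus argument nor the ``infinitely many geometrically distinct solutions modulo translations'' results apply directly, and your claim of an unbounded sequence of critical values is unsupported there. The paper obtains infinitely many breathers uniformly in all cases by a different device: it minimizes $J$ on the subspaces of $\nicefrac{T}{2m}$-antiperiodic functions for each $m \in \Nodd$ (admissible because $|u|^{p-1}u$ inherits the antiperiodicity), and the resulting ground states $u_m$ are pairwise distinct since the lowest nonzero temporal Fourier mode of $u_m$ is at least $m$. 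You would need this, or a comparable mechanism, to justify the asserted infinitude of solutions in case (b).
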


\begin{remark}
    Note that in case (b) we have $V_\per^+=V_\per^-=V_\per$ and $X^+=X^-=X$. Case (b) also includes the purely periodic case where $\Gamma_\loc=0$. Assumption (a) provides a convenient way to overcome compactness problems for Palais-Smale sequences. It remains open how to generalize (a), (b) to cases where $\inf_\R\Gamma>0$ and still $V$ is a perturbation of a periodic potential. 
\end{remark}

Examples of coefficients $V$ which satisfy \ref{ass:bounded}, \ref{ass:periodic_infinity}, \ref{ass:spectrum}, \ref{ass:point_spectrum} are given in Appendix~\ref{sec:examples}. The structure of the paper will be given at the end of the introduction after the account of the relevant literature which follows next.

The study of breather solutions to nonlinear wave equations may have its origins in completely integrable cases such as the Sine-Gordon equation $u_{tt}-u_{xx}+\sin u=0$ on $\R\times\R$ where explicit breather families can be constructed through the inverse scattering method \cite{ablowitz}. Although next to Sine-Gordon, a number of completely integrable systems with breather solutions are known, e.g., Korteweg-de Vries, nonlinear Schr\"odinger, Toda lattice etc., these systems are still rare, and therefore the search for methods to prove existence of breathers beyond inverse scattering continues. The first attempts of studying generalizations of the Sine-Gordon equation turned out to be fruitless since in \cite{birnir_etal, denzler, segur_kruskal, kowalczyk_et_al} it was shown that if the $\sin u$ nonlinearity in Sine-Gordon is generalized to an analytic perturbation $f(u)$ with $f(0) = 0$, $f'(0)=1$, $f''(0)=0$ then only the $f(u)=\sin u$ nonlinearity can support a breather solution. On the other hand, nonlinear lattice equations can support breather solutions \cite{mackay_aubry}, e.g. the 2-atomic Fermi-Pasta-Ulam-Tsingou chains \cite{james_noble, james_cuevas}. A similar result on the PDE-side of nonlinear wave equations was obtained by Blank, Chirilus-Bruckner, Lescarret and Schneider \cite{BlaSchneiChiril}, where breathers for spatially-periodic nonlinear wave equations of the type \eqref{eq:basic} were shown to exist. As we have explained above, the idea was to use the band-structure of the spectrum of the spatially periodic operator $L=-\frac{1}{V(x)}\dxsquare$ and design the coefficient function $V(x)$ in such a way that the temporal frequencies $k^2\omega^2$ with $\omega=\frac{2\pi}{T}$ being the breather frequency, fall into the gaps of the spectrum of $L$,  cf. \cite{bruckner_wayne} for a systematic approach to such coefficient functions based on inverse spectral theory. By adding a suitable linear potential with a bifurcation parameter to $L$, breather solutions bifurcating from the edge of the essential spectrum were constructed by the center-manifold theorem and spatial dynamics. The same approach was also used for finding breathers for constant-coefficient nonlinear wave equations on metric graphs \cite{Maier}, where the spatial heterogeneity of the graph generates a banded spectrum of the spatial operator.

A methodically different approach using calculus of variation tools rather than spatial dynamics came up in \cite{hirschreichel} and was also used in \cite{maier_reichel_schneider}. Variational methods for time-periodic solutions of nonlinear wave equations on spatially bounded intervals have been used before  \cite{brezis_coron_nirenberg, brezis_coron, hofer}. But its applicability to breathers on unbounded spatial domains like the real line has been substantially obstructed due to essential spectra rather than point spectra of the underlying spatial differential operators. The variational techniques constructed breathers as saddle points of an energy functional on a suitable Hilbert space.
It was successful for \eqref{eq:basic} with a periodic potential $V(x)$ \cite{hirschreichel} or on a periodic metric graph \cite{maier_reichel_schneider}, and it required essentially the same spectral situation as in the spatial dynamics approaches. As an additional feature the variational method does not rely on a bifurcation parameter and can therefore generate "large" breathers instead of "small" breathers locally bifurcating from zero as for the spatial dynamics method.  

From the results mentioned so far one might get the impression that (except for the completely integrable cases) spatial heterogeneity is a necessary prerequisite for the existence of breathers. It is surprising that in space dimensions 2 or higher this is not the case, cf. \cite{mandel_scheider_breather, scheider_breather} for the construction of weakly localized breathers.

Our paper is structured as follows: in Section~\ref{sec:proof_main} we set up the problem in a way that it can be treated by tools from variational calculus, i.e., we give a proper definition of the energy functional $J$ and its domain $\mathcal{H}$. Then we prove that ground states exists and are strong solutions of \eqref{eq:basic}. In Section~\ref{sec:embeddings} the function calculus for $L$ is introduced and used to prove embedding theorems $\mathcal{H}\embeds L^q(\R\times\T)$. Three appendices complete our paper: in Appendix~\ref{sec:vector_L2} we collect some definitions and properties of vector valued $L^2$-spaces based on positive-definite matrix-valued measures, which appear in the functional calculus for $L$. In Appendix~\ref{sec:eigenfunction_bounds} we show how $L^2$ and $L^\infty$-bounds for solutions of $-u''=\lambda V(x)u$ can be mutually estimated uniformly in $\lambda$. Finally, in Section~\ref{sec:examples} we give several classes of potential $V$ for which the assumptions 
\ref{ass:bounded}, \ref{ass:periodic_infinity}, \ref{ass:spectrum}, \ref{ass:point_spectrum} our our main result of Theorem~\ref{thm:main} hold.


\section{Proof of the main result} \label{sec:proof_main}

We begin with the functional analytic framework. The one-dimensional torus $\T = \R \slash_{T \Z}$ is equipped with the measure $\Der t = \frac{1}{T} \Der \lambda$ where $\Der\lambda$ is the Lebesgue measure on $[0, T]$. We use the notation $L^2_V(\R) \coloneqq L^2(\R, V(x)\Der x)$ and $L^2_V(\R\times\T) \coloneqq L^2(\R\times\T,V(x)\der (x,t))$. 
Our goal is to obtain breathers as critical points of a suitable functional. Purely formally, this can be achieved via the functional
$$
J(u) = \int_{\R\times\T} -V(x)u_t^2 + u_x^2 - \frac{2}{p+1} \Gamma(x) |u|^{p+1} \der[] (x,t)
$$
where $u\colon\R\times\R\to\R$ is a temporally $\frac{T}{2}$-antiperiodic function of time. 
Note that we have not yet specified the domain of the functional $J$. This will be our next task. First, we decompose $u$ into its temporal Fourier modes by writing 
$$
u(x,t) = \sum_{k\in \Zodd} \hat u_k(x) e_k(t)
$$
where $e_k(t) = \ee^{\ii \omega k t}$, $k\in\Z$ is an orthonormal basis of $L^2(\T)$. Then the functional $J$ takes the form 
$$
J(u) = J_0(u) - J_1(u)
$$
where 
$$
J_0(u) = \sum_{k\in \Zodd}\int_\R |\hat u_k'|^2 -k^2\omega^2 V(x)|\hat u_k|^2 \der x, 
\quad
J_1(u) = \tfrac{2}{p+1}\int_{\R\times\T} \Gamma(x) |u|^{p+1} \der (x,t).
$$
For the operator $L_k$ as defined in \eqref{eq:def:Lk}, we know by \ref{ass:spectrum} that $0$ does not belong to its spectrum.
Therefore the following constructions are possible.
Based on the spectral resolution $L=\int_\R \lambda \der P_\lambda$ of the selfadjoint operator $L: H^2(\R)\subset L^2_V(\R)\to L^2_V(\R)$ we can define 
$$
\sqrt{|L_k|} = \int_\R \sqrt{|\lambda-k^2\omega^2|} \der P_\lambda,
$$
which has domain $H^1(\R)$. 
We equip $H^1(\R)$ with the norm $\|v\|_{\mathcal{H}_k}\coloneqq \|\sqrt{|L_k|}v\|_{L^2_V}$ and thus obtain the Hilbert space $\mathcal{H}_k$.
The two orthogonal projections $P_k^\pm \colon \calH_k \to \calH_k$ given by
\begin{align*}
	v^+ \coloneqq P_k^+[v] \coloneqq \int_{k^2\omega^2}^\infty 1 \der P_\lambda[v],
	\qquad
	v^- \coloneqq P_k^-[v] \coloneqq \int_{-\infty}^{k^2\omega^2} 1 \der P_\lambda[v]
\end{align*}
yield an orthogonal decomposition $\mathcal{H}_k=\mathcal{H}_k^+\oplus\mathcal{H}_k^-$, $v = v^+ + v^-$ such that
\begin{align*}
	\int_\R |v'|^2 -k^2\omega^2 V(x)|v|^2 \der x =\|v^+\|_{\mathcal{H}_k}^2-\|v^-\|_{\mathcal{H}_k}^2, \quad \|v\|_{\mathcal{H}_k}^2 =\|v^+\|_{\mathcal{H}_k}^2+\|v^-\|_{\mathcal{H}_k}^2.
\end{align*}
For later purposes, let us also introduce the bilinear form associated to $L_k$ by 
$$
b_k(v,w) = \int_\R v' w' - k^2\omega^2 V(x) vw\der x, \quad v,w \in \mathcal{H}_k.
$$
 
\medskip

As we shall see in \cref{prop:Lp-embedding}, the proper domain for $J$ is the Hilbert space 
$$
\mathcal{H} \coloneqq \{u\in L^2(\R\times\T): u(x,t) = \sum_{k\in \Zodd} \hat u_k(x) e_k(t), \;\hat u_k=\overline{\hat u_{-k}} \;\forall k\in\Zodd, \;\|u\|_\mathcal{H}<\infty\}
$$
equipped with the norm $\|\cdot\|_\mathcal{H}$ according to 
$$
\|u\|_\mathcal{H}^2 \coloneqq \sum_{k\in \Zodd} \langle \sqrt{|L_k|}\hat u_k,\sqrt{|L_k|}\hat u_k\rangle_{L^2_V}=2\sum_{k\in \Nodd} \norm{\hat u_k}^2_{\mathcal{H}_k}.
$$
By \ref{ass:spectrum} we have that $\mathcal{H}$ continuously embeds into $L^2(\R\times\T)$.
Moreover, $\mathcal H = \oplus_{k\in \Nodd} \mathcal{H}_k$ and the orthogonal decomposition of $\mathcal{H}_k=\mathcal{H}_k^+\oplus \mathcal{H}_k^-$ now readily extends to $\mathcal{H}$ so that $\mathcal{H}=\mathcal{H}^+\oplus\mathcal{H}^-$, $\mathcal H^\pm = \oplus_{k\in \Nodd} \mathcal{H}_k^\pm$ and
\begin{align*}
J_0(u) =\|u^+\|_{\mathcal{H}}^2-\|u^-\|_{\mathcal{H}}^2, \quad \|u\|_{\mathcal{H}}^2 =\|u^+\|_{\mathcal{H}}^2+\|u^-\|_{\mathcal{H}}^2
\end{align*}
where $u=u^++u^-$ and $u^\pm=P^\pm[u]\in \mathcal{H}^\pm$ according to the two orthogonal projectors $P^\pm : \mathcal{H} \to \mathcal{H}^\pm$.

\begin{remark} \label{rem:dense}
    It is clear that functions $u=\sum_{|k|\leq K} \hat u_k e_k(t)$ with finitely many non-zero Fourier-modes and where $\hat{u}_k\in H^1(\R)$ has compact support are dense in $\mathcal{H}$.
\end{remark}

Since $J$ is neither bounded from above nor from below, a critical point of $J$ is necessarily a saddle point. A general tool for obtaining existence of saddle points is given by the generalized Nehari-Pankov manifold, cf. \cite{pankov, szulkin_weth}. It consists of minimizing $J$ on the set 
$$
\mathcal{M} = \{ u\in \mathcal{H}\setminus\mathcal{H}^-: J'(u)[w] = 0 \mbox{ for all }
w\in [u]+\mathcal{H}^-\}.
$$
The following properties of $J$ and $\mathcal{M}$ are important prerequisites for applying abstract results from \cite{szulkin_weth}.   

\begin{lemma} \label{lem:property_J_M} The following statements hold true:
\begin{itemize}
\item[(i)] $J_1$ is weakly lower semicontinuous, 
\begin{align*} 
J_1(0)=0 \quad \text{ and } \quad \frac{1}{2}J_1'(u)[u]> J_1(u)>0 \text{ for } u\neq 0.
\end{align*}
\item[(ii)] $\lim_{u\to 0} \frac{J_1'(u)}{\|u\|_{\mathcal{H}}}=0$ and $\lim_{u\to 0} \frac{J_1(u)}{\|u\|^2_{\mathcal{H}}}=0$.
\item[(iii)] For a weakly compact set $U\subset \mathcal{H}\setminus\{0\}$ we have $\lim_{s\to\infty} \frac{J_1(s u)}{s^2} = \infty$ uniformly w.r.t. $u\in U$.
\item[(iv)] In case (a) of Theorem~\ref{thm:main} the map $u \mapsto J_1'(u)$ is completely continuous from $\mathcal{H}$ to $\mathcal{H}'$.
\item[(v)] For each $w\in\mathcal{H}\setminus \mathcal{H}^-$ let $\mathcal{H}(w)=\R_{\geq 0}w+ \mathcal{H}^-$. Then there exists a unique nontrivial critical point $m(w)$ of $J|_{\mathcal{H}(w)}$. Moreover, $m(w)\in \mathcal{M}$ is the unique global maximizer of $J|_{\mathcal{H}(w)}$ as well as $J(m(w))>0$.
\item[(vi)] There exists $\delta>0$ such that $\| m(w)^+\|_{\mathcal{H}} \geq \delta$ for all $w\in \mathcal{H}\setminus \mathcal{H}^-$.
\end{itemize}
\end{lemma}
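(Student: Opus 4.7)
\emph{Plan.} All six statements rest on two tools: the continuous embedding $\mathcal{H}\embeds L^{p+1}(\R\times\T)$ established in \cref{sec:embeddings}, and the $(p+1)$-homogeneity together with strict convexity of $J_1$. Items (i)--(iii) are soft. For (i), convexity of $|\cdot|^{p+1}$ makes $J_1$ weakly lower semicontinuous via the embedding; the homogeneity identity $J_1'(u)[u]=(p+1)J_1(u)$ together with $p>1$ gives $\tfrac{1}{2}J_1'(u)[u]=\tfrac{p+1}{2}J_1(u)>J_1(u)$, while $J_1(u)>0$ for $u\neq 0$ uses $\Gamma>0$ almost everywhere. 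For (ii), the embedding yields $J_1(u)\lesssim\|u\|_{\mathcal{H}}^{p+1}$ and $\|J_1'(u)\|_{\mathcal{H}'}\lesssim\|u\|_{\mathcal{H}}^p$, so the claimed limits reduce to $p-1>0$. For (iii), write $J_1(su)/s^2=\tfrac{2}{p+1}s^{p-1}\int\Gamma|u|^{p+1}\der(x,t)$; it suffices to show $\alpha\coloneqq\inf_{u\in U}\int\Gamma|u|^{p+1}\der(x,t)>0$, and weak compactness of $U$ combined with the lower semicontinuity from (i) yields a minimizer $u^*\in U$, which cannot be zero, so $\alpha>0$.

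For (iv), the decay $\Gamma\to 0$ at infinity (case (a)) is essential. Given $u_n\rightharpoonup u$ in $\mathcal{H}$, fix $\eps>0$ and $R$ with $\Gamma(x)<\eps$ for $|x|>R$. For any test function $w$ with $\|w\|_\mathcal{H}\le 1$, H\"older bounds the contribution of $\{|x|>R\}$ to $(J_1'(u_n)-J_1'(u))[w]$ by $C\eps$ uniformly in $n$; on $[-R,R]\times\T$ the local compact embedding $\mathcal{H}\embeds L^{p+1}([-R,R]\times\T)$ (from \cref{sec:embeddings}) together with continuity of the Nemytskii operator $u\mapsto|u|^{p-1}u$ from $L^{p+1}$ to $L^{(p+1)/p}$ forces this contribution to zero, so sending $\eps\to 0$ completes the argument.

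For (v), which I expect to be the main obstacle, I parametrize $\mathcal{H}(w)=\R_{\ge 0}w^++\mathcal{H}^-$ by $(t,v)\in[0,\infty)\times\mathcal{H}^-$ via $u=tw^++v$ (using $w^+\neq 0$ since $w\notin\mathcal{H}^-$) and study $g(t,v)\coloneqq J(u)=t^2\|w^+\|^2_{\mathcal{H}}-\|v\|^2_\mathcal{H}-J_1(tw^++v)$. Strict concavity of $v\mapsto g(t,v)$ gives a unique maximizer $v(t)$ for each $t\ge 0$; (ii) yields $g(t,0)>0$ for small $t>0$, and (iii) applied on the unit sphere in $\R w^+$ together with the coercive $-\|v\|^2_\mathcal{H}$ gives $g(t,v)\to -\infty$ as $t+\|v\|_\mathcal{H}\to\infty$. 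Hence $g$ attains a positive maximum at some $(t_0,v_0)$ with $t_0>0$; a direct check identifies the Euler--Lagrange equations of $g$ at $(t_0,v_0)$ with the defining condition of $\mathcal{M}$, so $m(w)\coloneqq t_0 w^++v_0\in\mathcal{M}$ and $J(m(w))>0$. Uniqueness and global maximality follow from the Pankov inequality: for $\bar u\in\mathcal{M}$ and any $u=t\bar u+v\in\mathcal{H}(\bar u)\setminus\{\bar u\}$ with $t\ge 0$, $v\in\mathcal{H}^-$, the critical point equations for $\bar u$ combined with a pointwise strict convexity estimate for $s\mapsto|s|^{p+1}$ (scalar prototype $\tfrac{t^{p+1}-1}{p+1}-\tfrac{t^2-1}{2}>0$ for $t\ge 0$, $t\neq 1$, valid precisely because $p>1$) yield $J(u)<J(\bar u)$. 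This is the technically most delicate step and where $p>1$ is used essentially.

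Finally (vi): testing $J'(m(w))=0$ with $m(w)$ itself gives $2\|m(w)^+\|_\mathcal{H}^2-2\|m(w)^-\|_\mathcal{H}^2=(p+1)J_1(m(w))\ge 0$, so $\|m(w)\|_\mathcal{H}^2\le 2\|m(w)^+\|_\mathcal{H}^2$; testing with $m(w)^+\in[m(w)]+\mathcal{H}^-$ gives $2\|m(w)^+\|_\mathcal{H}^2=J_1'(m(w))[m(w)^+]\lesssim\|m(w)\|_\mathcal{H}^p\|m(w)^+\|_\mathcal{H}$. Combining, $\|m(w)^+\|_\mathcal{H}\lesssim\|m(w)^+\|_\mathcal{H}^p$, so $\|m(w)^+\|_\mathcal{H}^{p-1}\gtrsim 1$ uniformly in $w$, providing the desired $\delta>0$.
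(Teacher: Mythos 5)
Your proposal is correct and follows essentially the same route as the paper, whose proof simply defers to the standard Szulkin--Weth / Maier--Reichel--Schneider arguments: $(p+1)$-homogeneity plus the embedding $\mathcal{H}\embeds L^{p+1}(\R\times\T)$ of \cref{prop:Lp-embedding} for (i)--(iii), decay of $\Gamma$ combined with the locally compact embedding and Nemytskii continuity for (iv), maximization of $J$ over $\mathcal{H}(w)=\R_{\geq0}w^++\mathcal{H}^-$ together with the Pankov-type pointwise inequality for (v), and testing the Nehari relations with $m(w)$ and $m(w)^+$ for (vi). The one imprecise spot is the coercivity step in (v): applying (iii) to ``the unit sphere in $\R w^+$'' does not control $J_1(tw^++v)$ as $v$ varies, and you should instead either run the Szulkin--Weth weak-limit dichotomy for normalized sequences $u_n/\|u_n\|_{\mathcal H}$ or split into the cases $\|v\|_{\mathcal H}^2\geq 2t^2\|w^+\|_{\mathcal H}^2$ (where $-\|v\|_{\mathcal H}^2$ alone forces $g\to-\infty$) and $\|v\|_{\mathcal H}^2<2t^2\|w^+\|_{\mathcal H}^2$, in which case (iii) applied to the weakly compact set $\set{w^++\zeta:\zeta\in\mathcal{H}^-,\ \|\zeta\|_{\mathcal H}\leq\sqrt{2}\,\|w^+\|_{\mathcal H}}$ gives the blow-up of $J_1(tw^++v)/t^2$ --- a routine repair which, together with weak upper semicontinuity of $J$ along $\mathcal{H}(w)$ (from (i) and weak lower semicontinuity of the norm), yields the attained maximum.
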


\begin{proof}
The proof of (iv) is standard. The proof of the remaining claims is essentially contained in \cite{maier_reichel_schneider}. For (i)--(iii) see the proof of \cite[Lemma~5.1]{maier_reichel_schneider} where for Fatou's lemma the positivity of $\Gamma$ is used, cf. \ref{ass:first}. For (v)--(vi) we refer to the proof of \cite[Lemma~5.2]{maier_reichel_schneider}.
\end{proof}

Another important property of $\mathcal{M}$ is that is does not generate Lagrange-multipliers for critical points of $J$ when restricted to $\mathcal{M}$. This is the main meaning of the following result whose proof can be found in \cite{bartsch_dohnal_plum_reichel}.
 \begin{lemma}
     The set $\mathcal{M}$ is a $C^1$-manifold, Moreover, if $\mathcal{M}_0$ is a bounded subset of $\mathcal{M}$ then there exists a constant $C>0$ with the following property: if $u\in \mathcal{M}_0$ , $\nabla J[u]=\tau+\sigma$ with $\tau\in T_u\mathcal{M}$ and $\sigma\perp_\mathcal{H} \tau$ then 
     $$
     \|\nabla J(u)\|_{\mathcal H}\leq C\|\tau\|_{\mathcal H}.
     $$
 \end{lemma}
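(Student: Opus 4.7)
The plan is to use the parametrization of $\mathcal{M}$ by the unit sphere $S^+$ of $\mathcal{H}^+$: define $\hat m\colon S^+\to \mathcal{M}$, $\hat m(w) = m(w)$, which by parts (v)--(vi) of Lemma~\ref{lem:property_J_M} is a bijection with inverse $u\mapsto u^+/\|u^+\|_\mathcal{H}$ and satisfies $\|u^+\|_\mathcal{H}\geq \delta$ uniformly on $\mathcal{M}$.

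To show that $\mathcal{M}$ is a $C^1$-manifold, I would apply the implicit function theorem to the relation that defines $u = s(w)w + v^-(w)\in\mathcal{M}$ as the unique nontrivial critical point of $J$ on $\R_{\geq 0}w + \mathcal{H}^-$. Concretely, consider the zero set of
\[
\Phi(s, v^-, w) = \bigl(\,J'(sw+v^-)[w],\; J'(sw+v^-)|_{\mathcal{H}^-}\,\bigr),
\]
viewed as a map from $\R_{>0}\times \mathcal{H}^-\times S^+$ to $\R\times (\mathcal{H}^-)'$. The partial derivative $D_{(s,v^-)}\Phi$ at the unique solution $(s_w, v_w^-)$ corresponds to the Hessian $J''(u)$ restricted to $\R w + \mathcal{H}^-$, which is negative definite because by (v) the point $u=m(w)$ is the strict global maximum of $J$ on $\R_{\geq 0}w + \mathcal{H}^-$. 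Hence $w\mapsto (s(w), v^-(w))$ is $C^1$, thus $\hat m$ is $C^1$, and $\mathcal{M}$ inherits a $C^1$-structure.

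For the gradient inequality, the key observation is that the Nehari--Pankov condition defining $\mathcal{M}$ is equivalent to $\nabla J(u)\in N_u \coloneqq (\R u + \mathcal{H}^-)^\perp = \mathcal{H}^+ \cap (u^+)^\perp$. Differentiating $\hat m(w) = s(w)w + v^-(w)$ at $w_0 = u^+/\|u^+\|_\mathcal{H}$ shows that for every $n\in T_{w_0}S^+ = N_u$,
\[
d\hat m_{w_0}(n) = s_0 n + \xi(n),
\]
with $\xi(n)\in \R u + \mathcal{H}^-$ linear and bounded in $n$ and $s_0 = \|u^+\|_\mathcal{H} > 0$. This exhibits $T_u\mathcal{M}$ as a graph over $N_u$, i.e.\ $T_u\mathcal{M} = \{n + F(n) : n\in N_u\}$ for some bounded linear $F\colon N_u\to \R u + \mathcal{H}^-$. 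A short computation of the orthogonal projection $P_{T_u\mathcal{M}}$ on such a graph gives $\|P_{T_u\mathcal{M}} h\|_\mathcal{H}^2 = \langle (I+F^*F)^{-1}h, h\rangle_\mathcal{H}$ for $h\in N_u$; applied to $h=\nabla J(u)\in N_u$ this yields
\[
\|\tau\|_\mathcal{H}^2 \geq \frac{1}{1+\|F\|^2}\|\nabla J(u)\|_\mathcal{H}^2,
\]
and hence the claimed estimate with $C = \sqrt{1+\|F\|^2}$.

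The main technical obstacle is obtaining a uniform bound on $\|F\|$ across $\mathcal{M}_0$. Unravelling the implicit function theorem, $F$ is built from $(D_{(s,v^-)}\Phi)^{-1}\circ D_w\Phi$, so one needs a uniform coercivity bound for $-J''(u)|_{\R w_0 + \mathcal{H}^-}$ together with a uniform upper bound on the cross derivative $D_w\Phi$. Both rely on the quantitative properties of $J_1$ collected in (i)--(iii) of Lemma~\ref{lem:property_J_M} and on the fact that on $\mathcal{M}_0$ the scalar $s_0 = \|u^+\|_\mathcal{H}$ stays bounded above by hypothesis and below by (vi).
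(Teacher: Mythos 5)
The paper defers the proof of this lemma entirely to \cite{bartsch_dohnal_plum_reichel}, so there is no in-text argument to compare against; the following assesses your proposal on its own merits. The overall strategy is viable: parametrizing $\mathcal{M}$ by the unit sphere $S^+$ via $\hat m$, observing that the Nehari--Pankov condition means precisely $\nabla J(u) \in N_u := (\R u + \mathcal{H}^-)^\perp$, exhibiting $T_u\mathcal{M}$ as the graph of a bounded linear map $F\colon N_u \to N_u^\perp = \R u + \mathcal{H}^-$, and then invoking the correct identity $\|P_{T_u\mathcal{M}}h\|_\mathcal{H}^2 = \langle (I+F^*F)^{-1}h, h\rangle_\mathcal{H}$ for $h\in N_u$ to obtain $\|\nabla J(u)\|_\mathcal{H}\leq\sqrt{1+\|F\|^2}\,\|\tau\|_\mathcal{H}$.

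There are, however, genuine gaps. First, the justification that $D_{(s,v^-)}\Phi$ is an isomorphism ``because $m(w)$ is the strict global maximum of $J$ on $\R_{\geq 0}w + \mathcal{H}^-$'' is invalid reasoning: a strict maximum of a $C^2$ functional at a critical point only forces the Hessian to be negative \emph{semi}definite (consider $t\mapsto -t^4$). Negative definiteness of $J''(u)|_{\R u + \mathcal{H}^-}$ does hold, but it must be derived from the structure of $J$: insert $v = \alpha u + w^-$ into $J''(u)[v,v] = 2J_0(v) - 2p\int_{\R\times\T}\Gamma|u|^{p-1}v^2\der(x,t)$, use the Nehari--Pankov identities $J_0(u)=\int\Gamma|u|^{p+1}$ and $\langle u^-,w^-\rangle_\mathcal{H}=-\int\Gamma|u|^{p-1}u\,w^-$ together with Cauchy--Schwarz in the weight $\Gamma|u|^{p-1}$, and one arrives at an estimate of the form $J''(u)[v,v]\leq -2(p-1)(1-\eta)\alpha^2\int\Gamma|u|^{p+1} - c_\eta\|w^-\|_\mathcal{H}^2$ for $\eta\in(\tfrac{p-1}{p},1)$. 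Second, the uniform bound on $\|F\|$ over $\mathcal{M}_0$, which you correctly identify as the crux, is not established, and the appeal to (i)--(iii) of \cref{lem:property_J_M} is insufficient: the essential ingredient is a uniform lower bound $\int\Gamma|u|^{p+1}\geq a_0>0$ on $\mathcal{M}$, which instead follows from \cref{lem:property_J_M}(vi) together with the embedding $\mathcal{H}\embeds L^{p+1}(\R\times\T)$ of \cref{prop:Lp-embedding}; this, combined with $\sup_{\mathcal{M}_0}\|u\|_\mathcal{H}<\infty$, is what makes the coercivity constant --- and hence $\|F\|$ --- uniform. Finally, for $1<p<2$ the nonlinearity $J_1$ is generally only $C^1$, so an implicit-function argument built on $J''$ requires extra care in that range of exponents.
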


 The next result provides the important boundedness of Palais-Smale sequences. It differs methodically in its proof from \cite{maier_reichel_schneider}. In the following proofs $C$ denotes a constant that is independent of $u$ but may change from line to line.

\begin{lemma} \label{lem:boundedness_of_PS}
There exist constants $C, \eps>0$ such that 
\begin{align}\label{eq:loc:manifold_est}
	\eps\leq \|u\|_\mathcal{H} \leq C J(u)^\frac{p}{p+1} \mbox{ for all } u\in \mathcal{M}.
\end{align}
Moreover, any Palais-Smale sequence $(u_n)_{n\in\N}$ of $J:\mathcal{H}\to \R$ is bounded.
\end{lemma}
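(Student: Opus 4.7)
The plan is to exploit two key algebraic identities that hold on $\mathcal{M}$: that $u^+$ and every element of $\mathcal{H}^-$ are valid test directions (being elements of $[u]+\mathcal{H}^-$), and that $J_1$ is $(p+1)$-homogeneous so that $J_1'(u)[u]=(p+1)J_1(u)$. Combining these gives $2\|u^+\|_\mathcal{H}^2 = J_1'(u)[u^+]$, $-2\|u^-\|_\mathcal{H}^2 = J_1'(u)[u^-]$, and from $J'(u)[u]=0$ the identity $J_0(u)=\tfrac{p+1}{2}J_1(u)$, hence
\begin{equation*}
J(u) = \tfrac{p-1}{2} J_1(u) \quad \text{on } \mathcal{M}.
\end{equation*}
The lower bound $\|u\|_\mathcal{H}\geq \eps$ is essentially free: because $u \notin \mathcal{H}^-$ one has $u^+\neq 0$, and \cref{lem:property_J_M}(vi) yields $\|u\|_\mathcal{H}\geq \|u^+\|_\mathcal{H}\geq \delta$.

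For the upper bound I would estimate $J_1'(u)[v]=2\int_{\R\times\T}\Gamma|u|^{p-1}uv\der(x,t)$ by Hölder and then apply the $L^{p+1}$-embedding of $\mathcal{H}$ (anticipated in \cref{prop:Lp-embedding}) together with $\Gamma\in L^\infty$ to get
\begin{equation*}
|J_1'(u)[v]| \leq C\bigl(\tfrac{p+1}{2}J_1(u)\bigr)^{p/(p+1)}\|v\|_\mathcal{H}.
\end{equation*}
Feeding $v=u^\pm$ into the two identities above yields $\|u^\pm\|_\mathcal{H}\leq C J_1(u)^{p/(p+1)}$, hence $\|u\|_\mathcal{H}\leq C J_1(u)^{p/(p+1)}$, and substituting $J_1(u)=\tfrac{2}{p-1}J(u)$ finishes \eqref{eq:loc:manifold_est}.

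For Palais--Smale boundedness, let $(u_n)$ satisfy $J(u_n)\to c$ and $J'(u_n)\to 0$ in $\mathcal{H}'$. The relation $2J(u_n)-J'(u_n)[u_n]=(p-1)J_1(u_n)$ yields
\begin{equation*}
J_1(u_n) \leq C\bigl(1 + \|u_n\|_\mathcal{H}\bigr).
\end{equation*}
Testing $J'(u_n)$ against $u_n^\pm$ (both legitimate test directions as elements of $\mathcal{H}$) gives $2\|u_n^\pm\|_\mathcal{H}^2 = \pm J_1'(u_n)[u_n^\pm] + o(\|u_n^\pm\|_\mathcal{H})$. Applying the same Hölder/embedding estimate as before,
\begin{equation*}
\|u_n^\pm\|_\mathcal{H} \leq C J_1(u_n)^{p/(p+1)} + o(1) \leq C\bigl(1+\|u_n\|_\mathcal{H}\bigr)^{p/(p+1)} + o(1).
\end{equation*}
Summing in $\pm$ gives $\|u_n\|_\mathcal{H}\leq C(1+\|u_n\|_\mathcal{H})^{p/(p+1)}+o(1)$, and since $\tfrac{p}{p+1}<1$ this forces boundedness.

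The main technical input is the continuous embedding $\mathcal{H}\hookrightarrow L^{p+1}(\R\times\T)$; the rest is algebraic manipulation driven by the homogeneity of $J_1$. The one subtlety I expect to check carefully is that $u^+\in[u]+\mathcal{H}^-$ (so $u^+$ genuinely is a Nehari--Pankov test direction), which follows from writing $u^+ = 1\cdot u + (-u^-)$ with $-u^-\in\mathcal{H}^-$.
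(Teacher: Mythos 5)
Your proof is correct and follows essentially the same route as the paper: test $J'(u)$ against $u^\pm$ (legitimate since $u^\pm\in[u]+\mathcal{H}^-$), estimate $J_1'(u)[u^\pm]$ via H\"older together with the $L^{p+1}$-embedding of \cref{prop:Lp-embedding}, use the homogeneity identities $J=\tfrac{p-1}{2}J_1$ on $\mathcal{M}$ resp. $2J(u_n)-J'(u_n)[u_n]=(p-1)J_1(u_n)$ for the Palais--Smale part, and close the argument by absorbing the sublinear power $\tfrac{p}{p+1}<1$. The only (harmless) deviation is the lower bound $\|u\|_\mathcal{H}\geq\eps$, which you obtain from \cref{lem:property_J_M}(vi) via the identification $m(u)=u$ for $u\in\mathcal{M}$, whereas the paper gets it by combining the upper bound with $J(u)\leq C\|u\|_\mathcal{H}^{p+1}$; both arguments are valid.
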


\begin{proof} 
	In the following we use the embedding of $\mathcal{H}$ into $L^{p+1}(\R\times\T)$, cf. \cref{prop:Lp-embedding}. For $u\in \mathcal{M}$ we find 
	\begin{align*}
	\|u^+\|^2_\mathcal{H} &= \int_{\R\times\T} u_x u^+_x - V(x) u_t u^+_t \\
	& =\underbrace{J'(u)[u^+]}_{=J'(u)[u-u^-]=0} + \int_{\R\times\T} \Gamma(x) |u|^{p-1}uu^+\der (x,t)\\
	& \leq \|\Gamma\|_\infty^\frac{1}{p+1} \left(\int_{\R\times\T} \Gamma(x) |u|^{p+1}\der (x,t)\right)^\frac{p}{p+1} \|u^+\|_{L^{p+1}}\\
	& \leq C\|\Gamma\|_\infty^\frac{1}{p+1} \left(\int_{\R\times\T} \Gamma(x) |u|^{p+1}\der (x,t)\right)^\frac{p}{p+1} \|u^+\|_\mathcal{H}.
	\end{align*}
 	Together with a similar estimate for $u^-$ and $J(u)= \frac{p-1}{p+1} \int_{\R\times\T} \Gamma(x)|u|^{p+1} \der (x,t)$ this implies the second inequality of \eqref{eq:loc:manifold_est}. Since $J(u)\leq C\|u\|_\mathcal{H}^{p+1}$ for $u\in\mathcal{M}$ by \cref{prop:Lp-embedding} and $u\not=0$ we also get the first inequality of \eqref{eq:loc:manifold_est}.

	\medskip
	
	It remains to show the boundedness of a Palais-Smale sequence. Let $(u_n)_{n\in\N}$ be a Palais-Smale sequence for $J$ in $\mathcal{H}$. From the identities 
	\begin{align*}
	-J'(u_n)[u_n^-] &= \|u_n^-\|_\mathcal{H}^2 + \int_{\R\times\T} \Gamma(x) |u_n|^{p-1} u_n u_n^-\der (x,t),  \\
	J'(u_n)[u_n^+] &= \|u_n^+\|_\mathcal{H}^2 - \int_{\R\times\T} \Gamma(x) |u_n|^{p-1} u_n u_n^+\der (x,t)
	\end{align*}
	we find 
	$$
	\|u_n^\pm\|_\mathcal{H}^2 \leq \left( C\|\Gamma\|_\infty^\frac{1}{p+1} \left(\int_{\R\times\T} \Gamma(x)|u_n|^{p+1}\der (x,t)\right)^\frac{p}{p+1} + o(1)\right)\|u_n^\pm\|_\mathcal{H}
$$
so that 
\begin{equation} \label{eq:normbound}
	\|u_n\|_\mathcal{H} \leq C\left( \left(\int_{\R\times\T} \Gamma(x)|u_n|^{p+1}\der (x,t)\right)^\frac{p}{p+1} + 1\right).
\end{equation}
Using the identity $J(u_n)-\tfrac12 J'(u_n)[u_n] = \frac{p-1}{p+1} \int_{\R\times\T} \Gamma(x)|u_n|^{p+1} \der (x,t)$ and \eqref{eq:normbound} we obtain 
\begin{align*}
\int_{\R\times\T} \Gamma(x)|u_n|^{p+1}\der (x,t) &\leq C(J(u_n)+\|u_n\|_\mathcal{H}) \\
& \leq C\left(J(u_n) + \left(\int_{\R\times\T} \Gamma(x)|u_n|^{p+1}\der (x,t)\right)^\frac{p}{p+1} + 1\right).
\end{align*}
This implies
$$
\int_{\R\times\T} \Gamma(x)|u_n|^{p+1}\der (x,t) \leq C(J(u_n)+1)
$$
and once again with the help of \eqref{eq:normbound} 
$$
\|u_n\|_\mathcal{H}^\frac{p+1}{p} \leq C(J(u_n)+1)
$$
which proves boundedness of Palais-Smale sequences as claimed. 
\end{proof}

The next lemma is a variant of P.L.Lions' concentration compactness result, cf.~\cite{Willem}. Since the norm in $\mathcal{H}$ is nonlocal in space, the result cannot be derived in the standard way by combining H\"older- and Sobolev-inequality. Instead, one uses the embedding from $\mathcal{H}$ into another space $H$ defined by
$$
H \coloneqq \{u\in L^2(\R\times\T): u(x,t) = \sum_{k\in \Zodd} \hat u_k(x) e_k(t), \;\hat u_k=\overline{\hat u_{-k}} \;\forall k\in\Zodd, \;\|u\|_H<\infty\} 
$$
equipped with the norm $\|\cdot\|_H$ according to 
$$
\|u\|_H^2 \coloneqq \sum_{k\in \Zodd} \frac{1}{|k|}\|\hat u_k'\|_{L^2}^2+|k|\|\hat u_k\|_{L^2_V}^2. 
$$
The norm in $H$ is local in space and is suited for standard concentration-compactness arguments. 
In \cref{prop:H_embedding} we show boundedness of the embeddings $\calH \embeds H \embeds L^p(\R \times \T)$ for $p \in [2, 4)$.
Next we state the concentration-compactness lemma. A proof using the intermediate space $H$ and its embedding properties, interpolation arguments, and $\R=\bigcup_{x\in2r\Z}[x-r,x+r]$ for $r>0$ can be found in \cite{maier_reichel_schneider}.

\begin{lemma} \label{lem:ConccompLemma}
Let $q\in [2,\infty)$ and $r>0$ be given and let $(u_n)_{n\in\N}$ be a bounded sequence in $\mathcal{H}$ such that
\begin{align*}
\sup_{x\in 2r\Z} \int_{[x-r,x+r]\times\T} |u_n|^q \der (x,t) \to 0 \text{ as } n\to\infty.
\end{align*}
Then $u_n\to 0$ in $L^{\tilde{q}}(\R\times\T)$ as $n\to\infty$ for all $\tilde{q}\in (2,\infty)$.
\end{lemma}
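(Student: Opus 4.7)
The plan is to imitate P.L.\ Lions' classical concentration-compactness argument, but exploit the auxiliary Hilbert space $H$ whose norm is spatially local (so that restriction to intervals is well-behaved). The key point is that the $\mathcal{H}$-norm, being defined through $\sqrt{\abs{L_k}}$, is nonlocal in $x$; direct restriction to $I_x\coloneqq[x-r,x+r]$ is therefore not meaningful in $\mathcal{H}$, and this is exactly the reason for interposing $H$. The argument has two stages: first extract an $L^{\tilde{q}_0}$-decay for a single intermediate exponent, then bootstrap to all $\tilde q\in(2,\infty)$ via interpolation using the full embedding $\mathcal{H}\embeds L^p(\R\times\T)$ from \cref{prop:Lp-embedding}.

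For the first stage, fix $q_0\in(\max(q,2),4)$. By \cref{prop:H_embedding} we have $\mathcal{H}\embeds H\embeds L^{q_0}(\R\times\T)$; moreover the local version of the latter embedding, $\|u\|_{L^{q_0}(I_x\times\T)}\leq C\|u\|_{H(I_x\times\T)}$, holds with a constant $C$ independent of $x\in2r\Z$ because the $H$-norm is a sum of locally defined Sobolev-type norms in each Fourier mode. For $\tilde q\in(q,q_0)$ sufficiently close to $q$, standard interpolation yields
\begin{equation*}
\|u_n\|_{L^{\tilde q}(I_x\times\T)}^{\tilde q}\leq\|u_n\|_{L^q(I_x\times\T)}^{(1-\theta)\tilde q}\,\|u_n\|_{L^{q_0}(I_x\times\T)}^{\theta\tilde q}
\end{equation*}
with $\frac{1}{\tilde q}=\frac{1-\theta}{q}+\frac{\theta}{q_0}$ and $\theta\in(0,1)$ such that $(1-\theta)\tilde q>q$.

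Now I factor out the vanishing sup: write $\|u_n\|_{L^q(I_x\times\T)}^{(1-\theta)\tilde q}\leq\bigl(\sup_{y\in2r\Z}\|u_n\|_{L^q(I_y\times\T)}\bigr)^{(1-\theta)\tilde q-q}\|u_n\|_{L^q(I_x\times\T)}^q$ and bound $\|u_n\|_{L^{q_0}(I_x\times\T)}^{\theta\tilde q}\leq\|u_n\|_{L^{q_0}(\R\times\T)}^{\theta\tilde q}$. Summing over $x\in2r\Z$ and using $\R=\bigcup_x I_x$ (with bounded overlap) gives
\begin{equation*}
\|u_n\|_{L^{\tilde q}(\R\times\T)}^{\tilde q}\leq C\,\Bigl(\sup_{x\in2r\Z}\|u_n\|_{L^q(I_x\times\T)}\Bigr)^{(1-\theta)\tilde q-q}\|u_n\|_{L^q(\R\times\T)}^{q}\|u_n\|_{L^{q_0}(\R\times\T)}^{\theta\tilde q}.
\end{equation*}
Since $(u_n)$ is bounded in $\mathcal{H}$, it is bounded in $L^q$ and $L^{q_0}$ by \cref{prop:Lp-embedding}, and the first factor tends to zero by assumption and the positivity of the exponent. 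Hence $u_n\to0$ in $L^{\tilde q}(\R\times\T)$ for this particular $\tilde q\in(2,4)$.

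In the second stage, I extend the decay to every $\tilde q'\in(2,\infty)$. Since $(u_n)$ is bounded in $L^p(\R\times\T)$ for all $p\in[2,\infty)$ by \cref{prop:Lp-embedding}, the standard three-exponents interpolation $\|u_n\|_{L^{\tilde q'}}\leq\|u_n\|_{L^{\tilde q}}^{\sigma}\|u_n\|_{L^p}^{1-\sigma}$ with $p$ chosen large (for $\tilde q'>\tilde q$) or $\|u_n\|_{L^{\tilde q'}}\leq\|u_n\|_{L^2}^{\sigma}\|u_n\|_{L^{\tilde q}}^{1-\sigma}$ (for $\tilde q'\in(2,\tilde q)$) gives $u_n\to0$ in $L^{\tilde q'}$ in both regimes, completing the argument. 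The main technical obstacle is the first step: obtaining the local embedding $H(I_x\times\T)\embeds L^{q_0}(I_x\times\T)$ with a constant uniform in $x$ and tracking the exponent $(1-\theta)\tilde q-q$ so that it stays strictly positive; the restriction $q_0<4$ coming from \cref{prop:H_embedding} is precisely what forces the two-stage structure of the argument.
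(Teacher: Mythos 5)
Your overall plan (local interpolation between $L^q$ and an exponent $q_0<4$ reachable through $H$, summation over the intervals $[x-r,x+r]$, $x\in 2r\Z$, then a global interpolation to reach all exponents) is the route of the argument the paper cites from \cite{maier_reichel_schneider}, and your second stage is fine. The first stage, however, contains a genuine error in the exponent bookkeeping. In the interpolation inequality the parameter $\theta$ is not free: it is determined by $\frac{1}{\tilde q}=\frac{1-\theta}{q}+\frac{\theta}{q_0}$, which forces
\begin{equation*}
(1-\theta)\tilde q \;=\; \frac{q\,(q_0-\tilde q)}{q_0-q}\;<\;q
\qquad\text{for every }\tilde q\in(q,q_0),
\end{equation*}
so there is no admissible $\theta$ with $(1-\theta)\tilde q>q$. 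Consequently the exponent $(1-\theta)\tilde q-q$ you place on the supremum is strictly negative, and the step
$\norm{u_n}_{L^q([x-r,x+r]\times\T)}^{(1-\theta)\tilde q}\le\bigl(\sup_{y\in2r\Z}\norm{u_n}_{L^q([y-r,y+r]\times\T)}\bigr)^{(1-\theta)\tilde q-q}\norm{u_n}_{L^q([x-r,x+r]\times\T)}^{q}$
fails: since each local norm is $\le$ the supremum, raising to a negative power reverses the inequality. Even taken at face value, your final display would contain the vanishing supremum raised to a negative power, i.e.\ a factor tending to $+\infty$, so no conclusion follows. Note also that the uniform local embedding of $H$ into $L^{q_0}$ on $[x-r,x+r]\times\T$, which you correctly single out as the reason for introducing $H$, is never actually used in your chain of estimates (you bound the local $L^{q_0}$-norm by the global one), and this omission is exactly why the exponents cannot close.

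The classical Lions bookkeeping repairs this: keep the entire factor $\bigl(\sup_{y\in2r\Z}\norm{u_n}_{L^q([y-r,y+r]\times\T)}\bigr)^{(1-\theta)\tilde q}$ with its positive exponent, estimate the local $L^{q_0}$-norm by the $H$-norm restricted to $[x-r,x+r]$ (constant uniform in $x$), and choose $q<\tilde q<q_0<4$ so that $\theta\tilde q=\frac{q_0(\tilde q-q)}{q_0-q}\ge 2$ (for $q=2$ take, e.g., $q_0$ close to $4$ and $\tilde q\in[4-\frac{4}{q_0},q_0)$). Then the summation over $x\in2r\Z$ uses $\sum_{x}\norm{u_n}_{H,[x-r,x+r]}^{\theta\tilde q}\le\bigl(\sum_{x}\norm{u_n}_{H,[x-r,x+r]}^{2}\bigr)^{\theta\tilde q/2}=\norm{u_n}_H^{\theta\tilde q}$, where $\norm{\cdot}_{H,[x-r,x+r]}$ denotes the $H$-norm with the spatial integrals restricted to $[x-r,x+r]$; this additivity over a partition of $\R$ is precisely where the locality of $H$ enters, and boundedness in $\calH$ together with \cref{prop:H_embedding} then yields decay in one $L^{\tilde q}$. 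A further, smaller gap: your choice $q_0\in(\max(q,2),4)$ is empty when $q\ge4$, so the stated range $q\in[2,\infty)$ is not covered as written; this is easily fixed by first lowering the exponent in the hypothesis, interpolating the local $L^q$-norm against the local $L^2$-norms, which are bounded uniformly in $x$ by $\norm{u_n}_{L^2(\R\times\T)}\lesssim\norm{u_n}_{\calH}$.
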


\begin{proposition} \label{prop:H_embedding} Suppose that \ref{ass:bounded}, \ref{ass:periodic_infinity}, \ref{ass:spectrum} are fulfilled. Then the following holds true:
\begin{enumerate}
    \item[(i)] The embedding $\iota^\star: \mathcal{H}\to H$ is continuous.
    \item[(ii)] For every $q\in [2,4)$ the embedding $\tilde\iota: H \to L^q(\R\times \T)$ is continuous and locally compact, i.e., $\tilde\iota: H \to L^q(A\times \T)$ is compact for every compact set $A\subset \R$.
\end{enumerate}
\end{proposition}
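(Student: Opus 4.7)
My plan is to treat (i) and (ii) separately: (i) reduces to a mode-wise spectral comparison using only assumption \ref{ass:spectrum}, while for (ii) I would factor the embedding through the anisotropic half-order Sobolev space $H^{1/2}(\R\times\T)$ on the two-dimensional product manifold.

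For part (i), I would decompose $u = \sum_{k \in \Zodd} \hat u_k e_k$ and, for each $k \in \Nodd$, introduce the scalar spectral measure $\mu_k$ of $\hat u_k \in H^1(\R)$ with respect to $L$ on $L^2_V(\R)$. The spectral theorem then gives
\begin{equation*}
    \int_\R \Der\mu_k = \|\hat u_k\|_{L^2_V}^2, \qquad
    \int_\R \lambda \Der\mu_k(\lambda) = \|\hat u_k'\|_{L^2}^2, \qquad
    \int_\R |\lambda - k^2\omega^2| \Der\mu_k(\lambda) = \|\hat u_k\|_{\calH_k}^2,
\end{equation*}
so that $\|u\|_H \leq C \|u\|_\calH$ would follow from the pointwise inequality
\begin{equation*}
    \frac{\lambda}{|k|} + |k| \leq C \,|\lambda - k^2\omega^2|, \qquad \lambda \in \sigma(L) \subseteq [0,\infty),\; k \in \Nodd,
\end{equation*}
with $C$ independent of $\lambda, k$. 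This is where assumption \ref{ass:spectrum} enters: it furnishes $c_0 > 0$ with $|\sqrt\lambda - |k|\omega| \geq c_0$ for all such $\lambda, k$, giving the dichotomy $\sqrt\lambda \leq |k|\omega - c_0$ (whence $k^2\omega^2 - \lambda \geq c_0 |k|\omega$, which controls the left-hand side as it is at most $|k|(\omega^2 + 1)$) or $\sqrt\lambda \geq |k|\omega + c_0$ (whence the splitting $\lambda/|k| = (\lambda - k^2\omega^2)/|k| + |k|\omega^2$, together with $|k| \geq 1$ and $\lambda - k^2\omega^2 \geq 2c_0|k|\omega$, absorbs both terms). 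Integrating the resulting inequality against $\mu_k$ and summing over $k \in \Nodd$ then finishes (i).

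For part (ii), I would first use Plancherel in $x$, together with the two-sided bound $0 < c \leq V \leq \|V\|_\infty$ from \ref{ass:bounded}, to rewrite $\|u\|_H^2$ (up to constants) as $\sum_{k \in \Zodd} \int_\R \frac{\xi^2 + k^2}{|k|}\, |\F_x \hat u_k(\xi)|^2 \Der\xi$. The elementary bounds $\xi^2/|k| + |k| \geq \max(|k|, 2|\xi|) \geq 1$ for $|k|\geq 1$ (using AM-GM) then yield $1 + |\xi| + \omega|k| \leq C(\omega)(\xi^2/|k| + |k|)$, so the $H$-weight dominates the anisotropic $H^{1/2}(\R\times\T)$-weight $(1 + \xi^2 + \omega^2 k^2)^{1/2}$ uniformly in $k, \xi$. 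This yields the continuous inclusion $H \embeds H^{1/2}(\R\times\T)$, and the standard Sobolev embedding $H^{1/2}(\R\times\T) \embeds L^q(\R\times\T)$ for $q \in [2, 4)$ -- valid on the two-dimensional product manifold, the critical exponent being $q = 4$ -- establishes continuity of $\tilde\iota$. For local compactness with compact $A \subseteq \R$, I would compose the continuous restriction $H \to H^{1/2}(A \times \T)$ with the compact Rellich-Kondrachov embedding $H^{1/2}(A \times \T) \embeds L^q(A \times \T)$, valid for $q < 4$.

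The hard part will be the case analysis for the pointwise spectral inequality in (i), where one must carefully combine assumption \ref{ass:spectrum} with the $|k|^{-1}$ scaling of the $H$-norm to produce a constant uniform in both $k$ and $\lambda$; the weight comparison and Sobolev embedding in (ii) are then routine. The restriction $q < 4$ in the statement reflects the failure of compactness of the Rellich-Kondrachov embedding at the critical Sobolev exponent on the two-dimensional manifold $\R \times \T$.
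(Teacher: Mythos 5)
Your proposal is correct, and it diverges from the paper in an interesting way. For (i) the underlying idea is the same as the paper's: assumption (A3) gives $|\lambda-k^2\omega^2|=|\sqrt\lambda-|k|\omega|\,|\sqrt\lambda+|k|\omega|\gtrsim |k|$ on $\sigma(L)$, and this is what both arguments exploit; the difference is organizational. The paper proves the two bounds $\|v\|_{\mathcal{H}_k}^2\geq \delta\omega|k|\|v\|_{L^2_V}^2$ and $\|v\|_{\mathcal{H}_k}^2\geq \frac{C}{|k|}\|v'\|_{L^2}^2$ separately on $\mathcal{H}_k^+$ and $\mathcal{H}_k^-$ and then recombines via the orthogonal splitting, whereas you integrate the single pointwise inequality $\frac{\lambda}{|k|}+|k|\leq C|\lambda-k^2\omega^2|$ against the scalar spectral measure of $\hat u_k$ (whose support lies in $\sigma(L)\subseteq[0,\infty)$, and whose moments reproduce $\|\hat u_k\|_{L^2_V}^2$, $\|\hat u_k'\|_{L^2}^2$, $\|\hat u_k\|_{\mathcal{H}_k}^2$ since $\hat u_k\in H^1(\R)$ is in the form domain); this avoids the $\pm$ decomposition and is arguably cleaner, and your case analysis is sound. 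For (ii) your route is genuinely different: the paper works entirely by hand, combining a Riesz--Thorin (Hausdorff--Young) estimate in $t$, the Gagliardo--Nirenberg inequality in $x$ for each mode, and a triple H\"older inequality in $k$ (where the convergence of $\sum_{k}|k|^{-2/(q-2)}$ is exactly what enforces $q<4$), and obtains local compactness by truncating the Fourier series and passing to the operator-norm limit; you instead observe that the $H$-weight $\frac{\xi^2+k^2}{|k|}$ dominates the $H^{1/2}$-weight $(1+\xi^2+\omega^2k^2)^{1/2}$ uniformly for $|k|\geq1$ (using $V\approx 1$ from (A1) and Plancherel), so that $H\embeds H^{1/2}(\R\times\T)$, and then invoke the two-dimensional Sobolev embedding and fractional Rellich--Kondrachov on bounded cylinders. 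Your weight comparison is correct, and your approach is shorter and even gives continuity at the endpoint $q=4$ (only compactness fails there), while the paper's computation is self-contained and elementary, needing no fractional Sobolev theory on the noncompact cylinder $\R\times\T$ (for which you should at least record that the embedding $H^{1/2}(\R\times\T)\embeds L^4$ follows, e.g., by a uniformly locally finite partition of unity in $x$ or a standard reference, and that restriction to $A\times\T$ is bounded before applying compactness on a compact cylinder). These are presentation-level caveats, not gaps.
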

\begin{proof} (i): By assumption \ref{ass:spectrum} we have $\delta\coloneqq\inf\{|\sqrt{\lambda} - k\omega|: \lambda\in \sigma(L), k\in \Nodd\}>0$ and hence
$$
|\lambda-k^2\omega^2| = |\sqrt{\lambda}-|k|\omega||\sqrt{\lambda}+|k|\omega|\geq \delta |k|\omega
$$
for all $\lambda\in\sigma(L)$ and all $k\in \Zodd$. Thus $|L_k|\geq \delta\omega |k|\id$ on $H^1(\R)$ and for $v\in H^1(\R)$ we therefore have 
\begin{equation} \label{eq:teil1}
\|v\|_{\mathcal{H}_k}^2 = \langle \sqrt{|L_k|}v,\sqrt{|L_k|}v\rangle_{L^2_V}  \geq \delta \omega|k|\|v\|_{L^2_V}^2. 
\end{equation}
For $C>0$ small enough and for $v\in \mathcal{H}_k^+$, $k\in \Zodd$, we therefore find
$$
\int_\R |v'|^2-k^2\omega^2 \abs{v}^2 V(x) \der x = \|v\|_{\mathcal{H}_k}^2 \geq \frac{\omega^2 k^2C}{|k|-C} \|v\|_{L^2_V}^2
$$
which implies 
$$
\left(1 - \frac{C}{|k|}\right)\int_R |v'|^2\der x \geq k^2\omega^2 \|v\|_{L^2_V}^2
$$
and hence by rearranging terms 
\begin{equation} \label{eq:teil2}
\|v\|_{\mathcal{H}_k}^2\geq \frac{C}{|k|}\|v'\|_{L^2}^2.
\end{equation}
For $v\in \mathcal{H}_k^-$ we have $\int_\R |v'|^2-k^2\omega^2 \abs{v}^2 V(x)\der x\leq 0$ so that $\|v'\|_{L^2}^2\leq k^2\omega^2 \|v\|_{L^2_V}^2$ and hence in this case \eqref{eq:teil2} follows from \eqref{eq:teil1}. 
Having established \eqref{eq:teil2} separately for $v^+ \in \calH_k^+$ and $v^- \in \calH_k^-$, we obtain the estimate \eqref{eq:teil2} for $v \in \calH_k$:
\begin{align*}
	\frac{C}{2 |k|}\|v'\|_{L^2}^2
	\leq \frac{C}{|k|} \bigl( \|(v^+)'\|_{L^2}^2 + \|(v^-)'\|_{L^2}^2 \bigr)
	\leq 
	\|v^+\|_{\mathcal{H}_k}^2 + 
	\|v^-\|_{\mathcal{H}_k}^2
	=
	\|v\|_{\mathcal{H}_k}^2.
\end{align*}
Then \eqref{eq:teil1} and \eqref{eq:teil2} for all $k\in\Zodd$ imply the continuity of the embedding $\iota^\star: \mathcal{H}\to H$. 

\medskip

    (ii): First we see that for $u(x,t) = \sum_{k\in \Zodd} \hat u_k(x) e_k(t)$ we get 
    $$
    \|u\|_{L^q(\R\times\T)} \leq C_q \left(\sum_{k\in\Zodd} \|\hat u_k\|_{L^q}^{q'} \right)^{\nicefrac{1}{q'}}
    $$
    for all $q\in [2,\infty]$ by standard Riesz-Thorin interpolation. With the Gagliardo-Nirenberg inequality for $2<q<4$ and $\theta=\frac{1}{2}-\frac{1}{q}$
\begin{align*}
\|v\|_{L^q} \leq C_{GN} \|v'\|_{L^2}^\theta \|v\|_{L^2}^{1-\theta}, \qquad v\in H^1(\R)
\end{align*}
and a triple H\"older inequality with exponents $\frac{4(q-1)}{q-2}$, $\frac{4(q-1)}{q+2}$, and $\frac{2(q-1)}{q-2}$ we obtain 
\begin{align*}
\|u\|_{L^q(\R\times\T)}^{q'} & \leq  
C \sum_{k\in\Zodd} \|u_k\|_{L^q}^{q'} \leq C \sum_{k\in\Zodd} \|u_k'\|_{L^2}^{q'(\frac{1}{2}-\frac{1}{q})} \|u_k\|_{L^2}^{q'(\frac{1}{2}+\frac{1}{q})} \\
& =  C \sum_{k\in\Zodd} (|k|^{-\frac{1}{2}}\|u_k'\|_{L^2})^{q'(\frac{1}{2}-\frac{1}{q})} (|k|^\frac{1}{2} \|u_k\|_{L^2})^{q'(\frac{1}{2}+\frac{1}{q})} |k|^\frac{-1}{q-1}\\
&  \leq C\left(\sum_{k\in\Zodd} |k|^{-1} \|u_k'\|_{L^2}^2\right)^\frac{q-2}{4(q-1)} \left(\sum_{k\in\Zodd} |k|\|u_k\|_{L^2}^2\right)^\frac{q+2}{4(q-1)} \left(\sum_{k\in\Zodd} |k|^\frac{-2}{q-2}\right)^\frac{q-2}{2(q-1)} \\ 
&\leq  C\|u\|_H^{q'}. 
\end{align*}
Note that $\sum_{k\in\Zodd} |k|^\frac{-2}{q-2}$ converges due to $2<q<4$. This established the continuity of the embedding $\tilde\iota: H \to L^q(\R\times \T)$. For $q=2$ the embedding is clear. The local compactness can be seen as follows. First, we modify $\tilde\iota$ by setting $\tilde\iota_K\coloneqq \tilde\iota\circ T_K$ for $K\in\N$, where $T_K$ truncates the Fourier series to modes $k\in \Zodd$ with $|k|\leq K$. Then $\tilde\iota_K\to \tilde\iota$ in the operator norm as $K\to \infty$. Since $\tilde\iota_N$ maps compactly into $L^q(A\times\T)$ for every compact set $A\subset \R$, the same holds for $\tilde\iota$. 
\end{proof}

We are now ready to give the proof of Theorem~\ref{thm:main}. We employ the abstract result Theorem~35 from \cite{szulkin_weth}, which provides a Palais-Smale sequence $(u_n)_{n\in\N}$ for the functional $J$ that belongs to $\mathcal{M}$ and is minimizing for $J|_\mathcal{M}$. This is possible since by Lemma~\ref{lem:property_J_M} the conditions (B1), (B2) and (i) and (ii) of Theorem~35 in \cite{szulkin_weth} are fulfilled. At this point, we do not claim that also (iii) of Theorem~35 in \cite{szulkin_weth}, which leads to the Palais-Smale condition, holds. But as we shall see this is indeed the case in case (a) of Theorem~\ref{thm:main} but not necessarily in case (b). As a consequence of Theorem~35 in \cite{szulkin_weth} we obtain a minimizing Palais-Smale $(u_n)_{n\in \N}$ in $\mathcal{M}$ with $J'(u_n)\to 0$ as $n\to\infty$. By Lemma~\ref{lem:boundedness_of_PS} we know that $(u_n)_{n\in\N}$ is bounded, and hence there is $u\in\mathcal{H}$ and a subsequence (again denoted by $(u_n)_{n\in\N}$) such that $u_n\rightharpoonup u$ as $n\to\infty$. Using that $u_n\to u$ in $L^{p+1}_\loc(\R\times\T)$ and that compactly supported functions are dense in $\mathcal{H}$ (cf. \cref{rem:dense}) we deduce that $J'(u)=0$.

It remains to shows that the limit function $u$ belongs to $\mathcal{M}$ (which implies that it is non-zero) and that it is a minimizer of $J$ on $\mathcal{M}$. For this purpose we give different proofs depending on which of the assumptions (a) or (b) of \cref{thm:main} holds.

\medskip

Case (a): By Lemma~\ref{lem:property_J_M}(iv) also property (iii) of Theorem~35 in \cite{szulkin_weth} holds and hence it provides a (necessarily nontrivial) ground state of $J|_\mathcal{M}$ and infinitely many bound states. 

\medskip

Case (b) -- purely periodic case: Let us first assume $\Gamma_\loc\equiv 0$ so that we are in a purely periodic setting. In this case property (iii) of Theorem~35 in \cite{szulkin_weth} does not hold. However, due to the periodic structure, concentration-compactness arguments can be used, and the proof is essentially the same as in \cite{maier_reichel_schneider}. Therefore, we only sketch the argument. First, since $u_n \in \calM$ we have $J(u_n)= \frac{p-1}{p+1}\int_{\R\times\T} \Gamma(x) |u_n|^{p+1} \der(x,t)$, and using \cref{lem:boundedness_of_PS} it follows that $0$ is not a limit point of $(u_n)_{n\in\N}$ in $L^{p+1}(\R\times\T)$. Then, by \cref{lem:ConccompLemma} with $r=\frac X2$ there exist $x_n\in X\Z$ and $\delta>0$ such that for $v_n(x,t)\coloneqq u(x+x_n,t)$ we have
$$
\int_{[-\frac{X}{2},\frac{X}{2}] \times \T} |v_n|^2\der (x,t)\geq\delta>0.
$$
The sequence $(v_n)_{n\in\N}$ is still a Palais-Smale sequence for $J$, and hence (by \cref{lem:boundedness_of_PS,prop:Lp-embedding}) has a weakly convergent subsequence $v_n\rightharpoonup v\not=0$ as $n\to\infty$ with $v\in\mathcal{H}$.
Following our observations at the beginning of the proof, we find that $J'(v)=0$ and due to $0=J'(v)[v]=\|v^+\|_\mathcal{H}^2-\|v^-\|_\mathcal{H}^2-(p+1)J_1(v)$ we see that $v^+\not =0$. Hence, $v\in\mathcal{M}$ and by a Fatou-type argument we see that $v$ is indeed a minimizer of $J$ on $\mathcal{M}$. 

\medskip

Case (b) -- perturbed periodic case: Now we assume $\Gamma_\loc\not\equiv 0$. In this case let us consider two functionals: next to $J=J_0-J_1$ we also consider $J^\per = J_0-J_1^\per$ with 
$$
J_1^\per(u) = \tfrac{2}{p+1}\int_{\R\times\T} \Gamma_\per (x) |u|^{p+1}\der (x,t)
$$
The only difference between the two functionals is that $J_1^\per \leq J_1$ due to the assumption that $\Gamma_\per \leq \Gamma=\Gamma_\per+\Gamma_\loc$. While the Hilbert space $\mathcal{H}$ and the decomposition $\calH = \calH^+ \oplus \calH^-$ stay the same for both functionals, the underlying Nehari-Pankov manifolds are different, i.e., next to $\mathcal{M}$ we also have 
$$
\mathcal{M}^\per = \{ u\in \mathcal{H}\setminus\mathcal{H}^-: {J^\per}'(u)[w] = 0 \mbox{ for all }
 w\in [u]+\mathcal{H}^-\}
$$
together with the two ground-state levels
$$
c^\per = \inf\{ J^\per(u): u \in \mathcal{M}^\per\}, \quad  c = \inf\{J(u): u \in \mathcal{M}\}.
$$
We already know from the previous case that $c^\per$ is attained for a minimizer $u^\per$ with ${J^\per}'(u^\per) = 0$. Let us show that $c \leq c^\per$. Associated to $\calM$ and $\calM^\per$ we have the two maps $m\colon \calH\setminus\calH^-\to \calM$ and $m^\per \colon  \calH\setminus\calH^-\to \calM^\per$, cf. \cref{lem:property_J_M}.
Since $m^\per(u^\per) = u^\per \not \in \mathcal{H}^-$ and $u \coloneqq m(u^\per) \in [0, \infty) u^\per+\mathcal{H}^-$, \cref{lem:property_J_M}~(v) shows $J^\per(u) \leq J^\per(u^\per)=c^\per$. On the other hand, $\Gamma_\loc\geq 0$ implies $J(u) \leq J^\per(u)$. Combining these, we have $c \leq J(u) \leq J^\per(u) \leq J^\per(u^\per) = c^\per$ as claimed.

Let us first consider the case where $c = c^\per$. For the above inequalities to be equalities, $\Gamma_\loc u = 0$ and $u = u^\per$ must hold. But then $u = u^\per$ is both a minimizer and a solution since $J'(u) = {J^\per}'(u^\per) = 0$. So let us assume in the following that $c < c^\per$ holds. Based on this inequality let us now verify that $c$ is attained. As in the purely periodic case we start with the bounded Palais-Smale sequence $(u_n)_{n\in\N}$ for $J$ on $\mathcal{M}$ where $0$ is not a limit point of $(u_n)_{n\in\N}$ in $L^{p+1}(\R\times\T)$. By Lemma~\ref{lem:ConccompLemma} this time for the exponent $q=p+1$ there exists a sequence $x_n\in X\Z$ and $\delta>0$ such that
$$
\int_{[x_n-\frac{X}{2},x_n+\frac{X}{2}] \times \T} |u_n|^{p+1} \der (x,t)\geq \delta
$$
for all $n\in \N$. We claim that due to $c<c^\per$ we have that $0$ is not a limit point of $(u_n)_{n\in\N}$ in $L^{p+1}_\loc(\R\times\T)$, which is enough to conclude that a weakly convergent subsequence $u_n\rightharpoonup u$ provides a minimizer $u$ of $J$ on $\mathcal{M}$ (the multiplicity result is then the same as in the purely periodic case). So let us a assume for contradiction that a subsequence of $(u_n)_{n\in\N}$ (again denoted by $(u_n)_{\in\N}$) converges to $0$ in $L^{p+1}_\loc(\R\times\T)$. Then necessarily $|x_n|\to \infty$ as $n\to\infty$. If we set $v_n(x,t) \coloneqq u_n(x-x_n,t)$ then (up to a subsequence) $v_n\rightharpoonup v$ in $\mathcal{H}$ and $v_n \to v$ in $L^{p+1}_\loc(\R\times\T)$ and pointwise a.e. as $n\to\infty$ for some $v\in\mathcal{H}\setminus\{0\}$. Recalling Remark~\ref{rem:dense} let us take a function $\varphi\in \mathcal{M}$ with compact support and set $\varphi_n(x,t) \coloneqq \varphi(x+x_n,t)$. Then 
\begin{align*}
o(1) = J'(u_n)[\varphi_n] & = J_0'(u_n)[\varphi_n] - {J_1^\per}'(u_n)[\varphi_n] - 2\int_{\R\times\T} \Gamma_\loc(x) |u_n|^{p-1} u_n\varphi_n \der (x,t) \\
& = J_0'(v_n)[\varphi] - {J_1^\per}'(v_n)[\varphi] -2\int_{\supp\varphi} \Gamma_\loc(x-x_n) |v_n|^{p-1} v_n\varphi\der (x,t) \\
& \to {J^\per}'(v)[\varphi]
\end{align*}
where we have used that  $v_n^\pm\rightharpoonup v^\pm$, $v_n\to v$ in $L^{p+1}_\loc(\R\times\T)$, $\varphi$ has compact support and $\Gamma_\loc(x)\to 0$ as $|x|\to \infty$. Hence $v$ is a nontrivial critical point of $J^\per$ and belongs to $\mathcal{M}^\per$. Thus 
\begin{align*}
    c^\per\leq J^\per(v)& = J^\per(v)-\frac{1}{2} {J^\per}'(v)[v] \\
    &= \frac{p+1}{p-1} \int_{\R\times\T} \Gamma^\per(x) |v|^{p+1} \der (x,t) \\
    & \leq \frac{p+1}{p-1} \liminf_{n\in\N} \int_{\R\times\T} \Gamma^\per(x) |v_n|^{p+1} \der (x,t) \\
    & \leq \frac{p+1}{p-1} \liminf_{n\in\N} \int_{\R\times\T} \Gamma(x) |v_n|^{p+1} \der (x,t) \\
    & = \liminf_{n\in\N} \left(J(u_n)-\frac{1}{2} J'(u_n)[u_n]\right) \\
    & = c
\end{align*}
which contradicts the already established inequality $c<c^\per$. This contradiction completes the proof of the perturbed periodic case.

\medskip

The multiplicity result can finally be obtained as follows: instead of restricting to $\nicefrac{T}{2}$-antiperiodic functions, we can consider the set of $\nicefrac{T}{2m}$-antiperiodic functions for $m\in \Nodd$. Note that $u$ being $\nicefrac{T}{2m}$-antiperiodic implies that also $|u|^{p-1}u$ is $\nicefrac{T}{2m}$-antiperiodic. Hence, $\nicefrac{T}{2m}$-antiperiodicity is compatible with the underlying equation \eqref{eq:basic}. Assuming $\nicefrac{T}{2m}$-antiperiodicity amounts to allowing temporal Fourier-modes in the set $m\Zodd$ instead of $\Zodd$. We can now find a ground state $u_m$ of $J$ in the set of $\nicefrac{T}{2m}$-antiperiodic functions for every $m\in \Nodd$. The set $\{u_m: m\in \Nodd\}$ cannot be finite since the (in absolute value) lowest index of all non-zero Fourier modes of $u_m$ is at least $m$.

To finish the proof of Theorem~\ref{thm:main} we show that our obtained critical  point $u \in \mathcal{H}$ of $J$ lies in $H^2(\R \times \T)$ and satisfies the equation \eqref{eq:basic} pointwise almost everywhere.
The proof is based on the ideas of Chapter 6 in \cite{maier_reichel_schneider}. We present in detail only the parts which differ from \cite{maier_reichel_schneider}. \medskip

We start with analyzing the linear problem
\begin{align*}
    V(x) \partial_t^2 w - \partial_x^2 w = f .
\end{align*}
\begin{lemma}\label{lem:reglin}
Let $f \in \mathcal{H}'$. Then there exists a unique solution $w \in \mathcal{H}$ to
\begin{align}\label{eq:linprob}
    \sum_{k\in \Zodd} b_k(w_k,\varphi_k) 
	= \langle f, \varphi \rangle_{\mathcal{H}' \times \mathcal{H}} \quad \text{ for all } \varphi \in \mathcal{H}
\end{align}
and  $\norm{w}_\mathcal{H}=\norm{f}_{\mathcal{H}'}$ holds.
Further, if $f \in L^2_V(\R \times \T)$ then $w\in H^1(\R\times\T)$. 
\end{lemma}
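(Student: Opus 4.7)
The bilinear form
\[
B(w,\varphi)\coloneqq\sum_{k\in\Zodd}b_k(w_k,\varphi_k)
\]
appearing on the left of \eqref{eq:linprob} is indefinite, so the direct Lax--Milgram theorem is not applicable. My plan is to exploit the $b_k$-orthogonality of $\mathcal{H}_k^+$ and $\mathcal{H}_k^-$ together with $b_k(v^\pm,v^\pm)=\pm\|v^\pm\|_{\mathcal{H}_k}^2$ to rewrite
\[
B(w,\varphi)=\langle w^+,\varphi^+\rangle_{\mathcal{H}}-\langle w^-,\varphi^-\rangle_{\mathcal{H}}=\langle Sw,\varphi\rangle_{\mathcal{H}},
\]
where $S\coloneqq P^+-P^-$ is an isometric involution on $\mathcal{H}$. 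This reduces the problem to the Riesz representation theorem: for $f\in\mathcal{H}'$ I let $\widetilde w\in\mathcal{H}$ be the Riesz representative satisfying $\langle\widetilde w,\varphi\rangle_{\mathcal{H}}=\langle f,\varphi\rangle_{\mathcal{H}'\times\mathcal{H}}$ and $\|\widetilde w\|_{\mathcal{H}}=\|f\|_{\mathcal{H}'}$. Setting $w\coloneqq S\widetilde w=\widetilde w^+-\widetilde w^-$ then yields $B(w,\varphi)=\langle S^2\widetilde w,\varphi\rangle_{\mathcal{H}}=\langle f,\varphi\rangle_{\mathcal{H}'\times\mathcal{H}}$, and $\|w\|_{\mathcal{H}}=\|\widetilde w\|_{\mathcal{H}}=\|f\|_{\mathcal{H}'}$ by the isometry of $S$. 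Uniqueness is immediate: if $B(w,\cdot)\equiv 0$ then $Sw=0$ in $\mathcal{H}$, hence $w=0$.

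For the regularity statement I would first identify $f\in L^2_V(\R\times\T)$ with the element $\varphi\mapsto\int_{\R\times\T}V f\varphi\,\Der(x,t)$ of $\mathcal{H}'$, which is continuous via the embedding $\mathcal{H}\embeds L^2(\R\times\T)$ mentioned just before \cref{rem:dense}. Expanding $f=\sum_{k\in\Zodd}f_k e_k$ and using orthogonality of the Fourier modes decouples \eqref{eq:linprob} into the one-dimensional problems $L_k w_k=f_k$ in $L^2_V(\R)$ for every $k\in\Zodd$. The quantitative spectral gap \ref{ass:spectrum} factorises as
\[
|\lambda-k^2\omega^2|=|\sqrt\lambda-|k|\omega|\,(\sqrt\lambda+|k|\omega)\ge\delta\omega|k|\quad\text{for every }\lambda\in\sigma(L),
\]
and the functional calculus for $L$ then yields the resolvent bound $\|w_k\|_{L^2_V}\le(\delta\omega|k|)^{-1}\|f_k\|_{L^2_V}$, which controls the time derivative via $\sum_{k\in\Zodd}k^2\omega^2\|w_k\|_{L^2_V}^2\le\delta^{-2}\|f\|_{L^2_V}^2$. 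Testing $L_k w_k=f_k$ against $w_k$ itself converts the bilinear form into
\[
\|w_k'\|_{L^2}^2=k^2\omega^2\|w_k\|_{L^2_V}^2+\langle f_k,w_k\rangle_{L^2_V}\le C\,\|f_k\|_{L^2_V}^2
\]
with $C$ independent of $k$, and summing in $k$ combined with Plancherel gives $\|w_x\|_{L^2}^2+\|w_t\|_{L^2}^2\le C'\|f\|_{L^2_V}^2$, so that $w\in H^1(\R\times\T)$ as claimed.

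The existence, uniqueness and isometric identity are clean once the $\pm$-decomposition is recognised as the bilinear form of an isometric involution. The expected technical obstacle lies in the $k$-uniform derivative estimate: a priori the indefinite contribution $k^2\omega^2\|w_k\|_{L^2_V}^2$ blows up individually as $|k|\to\infty$, and it is only the \emph{quantitative} spectral gap $|\sqrt\lambda-k\omega|\ge\delta$ -- as opposed to a merely qualitative $0\notin\sigma(L_k)$ -- that supplies the compensating $|k|^{-1}$ factor in the resolvent bound on which the $k$-summation crucially depends.
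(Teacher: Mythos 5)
Your proof is correct and follows essentially the same route as the paper: the existence/uniqueness/isometry part is the Riesz-representation-plus-involution argument that the paper outsources to \cite{maier_reichel_schneider}, and the $H^1$-regularity rests on exactly the same quantitative factorization $|\lambda-k^2\omega^2|\ge\delta\omega|k|$ from \ref{ass:spectrum} to control $\partial_t w$ and then the spatial derivative. The only (cosmetic) difference is that you bound $\|\hat w_k'\|_{L^2}^2$ mode-wise by testing $L_k \hat w_k=\hat f_k$ against $\hat w_k$, whereas the paper sums the form identity globally and uses $\|w\|_{\mathcal{H}}=\|f\|_{\mathcal{H}'}$; both yield the same $k$-uniform estimate.
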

\begin{proof}
The first statement follows directly from Lemma 6.1 in \cite{maier_reichel_schneider}. 
For the proof of the second statement we use the spectral resolution $L=\int_\R \lambda \der P_\lambda$ to expand 
\begin{align*}
	f(x, t) =\sum_{k \in \Zodd} \ee^{\ii \omega k t} \left(\int_{\R} \der P_\lambda[\hat f_k]\right)(x)
\end{align*}
and the solution $w$ to \eqref{eq:linprob} as
\begin{align*}
	w(x,t) =\sum_{k \in \Zodd} \ee^{\ii \omega k t} \left(\int_{\R} \frac{1}{\lambda - k^2 \omega^2} \der P_\lambda[\hat f_k]\right) (x)
\end{align*}
so that
\begin{align*}
	\partial_t w(x, t) = \sum_{k \in \Zodd}  \ee^{\ii \omega k t} \left( \int_{\R} \frac{\ii \omega k}{\lambda - k^2 \omega^2} \der P_\lambda[\hat f_k] \right)(x)
\end{align*}
By our spectral assumption \ref{ass:spectrum} there exists a constant $c >0$ such that
\begin{align*}
    \frac{|\omega k|}{|\lambda - k^2 \omega^2|}= \frac{\left| \omega k\right|}{|\sqrt{\lambda}- |k| \omega| \left|\sqrt{\lambda}+ |k| \omega\right|} \leq c \frac{|\omega k|}{\left|\sqrt{\lambda}+ |k| \omega\right|} \leq c
\end{align*}
for all $k \in \Zodd$ and $\lambda \in \sigma(L)$. We conclude
\begin{align*}
    \norm{ \partial_t w}_{L^2_V(\R \times \T)}^2 &= \sum_{k \in \Zodd} \int_\R \frac{\omega^2 k^2}{(\lambda-k^2 \omega^2)^2 } \der \norm{P_\lambda \hat f_k}_{L^2_V(\R)}^2  \\
    & \leq \sum_{k \in \Zodd} \int_\R c^2 \der \norm{P_\lambda \hat{f}_k}_{L^2_V(\R)}^2 =  c^2 \norm{f}_{L^2_V(\R \times \T)}^2.
\end{align*}
Next, 
\begin{align*}
    \sum_{k\in \Zodd}\int_\R |\hat w_k'|^2 -k^2\omega^2 V(x)|\hat w_k|^2 \der x  = \norm{w^+}_{\mathcal{H}}^2-\norm{w^-}_{\mathcal{H}}^2 \leq \norm{w^+}_{\mathcal{H}}^2+ \norm{w^-}_{\mathcal{H}}^2=\norm{w}_\mathcal{H}^2
\end{align*}
implies
\begin{align*}
    \sum_{k\in \Zodd}\int_\R |\hat w_k'|^2 \der x &\leq \sum_{k\in \Zodd}\int_\R k^2\omega^2 V(x)|\hat w_k|^2 \der x + \norm{w}_\mathcal{H}^2
	=\norm{\partial_t w}_{L^2_V(\R \times \T)}^2 + \norm{w}_\mathcal{H}^2
\end{align*}
and hence
\begin{align*}
    \norm{ \partial_x w}_{L^2(\R \times \T)}^2 = \sum_{k \in \Zodd} \norm{\hat{w}_k'}_{L^2(\R)}^2 < \infty. &\qedhere
\end{align*}
\end{proof}

Weak solutions to \eqref{eq:basic} are defined as follows.

\begin{definition} \label{def:weak_sol} A time periodic function $u\in H^1(\R\times\T)$ is called a weak solution of \eqref{eq:basic} if
\begin{equation} \label{eq:weak}
\int_{\R\times\T} \left(u_x\phi_x-V(x) u_t\phi_t - \Gamma(x)\abs{u}^{p-1}u\phi\right) \der (x,t)=0
\end{equation}
holds for every time periodic $\phi\in H^1(\R\times\T)$. 
\end{definition}

Next we transfer the result from Lemma \ref{lem:reglin} to the nonlinear case and show that critical points of $J$ are weak solutions to \eqref{eq:basic}.
\begin{lemma}
    Let $u \in \mathcal{H}$ be a critical point of $J$. Then $u\in H^1(\R\times\T)$ is a weak solution to \eqref{eq:basic} in the sense of Definition \ref{def:weak_sol}.
\end{lemma}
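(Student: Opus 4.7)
The plan is to first upgrade the $\mathcal{H}$-level Euler--Lagrange identity for $u$ to $H^1$-regularity via \cref{lem:reglin}, and then to extend the resulting weak form from $\mathcal{H}$-test functions to arbitrary time-periodic $H^1$ test functions.

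Since $u\in\mathcal{H}$ is a critical point of $J$, it satisfies
\begin{align*}
\sum_{k\in\Zodd} b_k(\hat u_k,\hat\varphi_k) = \int_{\R\times\T} \Gamma(x)|u|^{p-1}u\,\varphi\,d(x,t) \quad\text{for all }\varphi\in\mathcal{H}.
\end{align*}
Setting $f\coloneqq \Gamma|u|^{p-1}u/V$, the boundedness of $V,\Gamma$ together with $\essinf V>0$ and the embedding $\mathcal{H}\embeds L^{2p}(\R\times\T)$ from \cref{prop:Lp-embedding} yield $f\in L^2_V(\R\times\T)$. Rewriting the right-hand side above as $\int_{\R\times\T} f\,\varphi\,V\,d(x,t)$, I see that $u$ is a solution of \eqref{eq:linprob} with this $f$, and the regularity conclusion of \cref{lem:reglin} together with its uniqueness statement forces $u\in H^1(\R\times\T)$.

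With $u\in H^1(\R\times\T)$ in hand, Parseval's identity in the time variable shows that for every $\varphi\in\mathcal{H}$,
\begin{align*}
\sum_{k\in\Zodd} b_k(\hat u_k,\hat\varphi_k) = \int_{\R\times\T} (u_x\varphi_x - V(x)u_t\varphi_t)\,d(x,t),
\end{align*}
so that the critical point identity becomes exactly \eqref{eq:weak} tested against $\varphi\in\mathcal{H}$. To extend this to an arbitrary time-periodic $\phi\in H^1(\R\times\T)$, I would decompose $\phi=\phi_a+\phi_p$ into its $T/2$-antiperiodic and $T/2$-periodic parts, both of which lie in $H^1(\R\times\T)$. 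Since $u$, $u_x$, $u_t$ and $|u|^{p-1}u$ are all $T/2$-antiperiodic, each term of \eqref{eq:weak} tested against $\phi_p$ vanishes on integrating in $t$. For the antiperiodic part $\phi_a$, the test functions from \cref{rem:dense}---finitely many Fourier modes with compactly supported $x$-profiles---lie in $\mathcal{H}$ and are dense in the closed subspace of $T/2$-antiperiodic functions in $H^1(\R\times\T)$ with respect to the $H^1$-norm. The functional $\phi\mapsto\int_{\R\times\T} u_x\phi_x - V(x)u_t\phi_t - \Gamma(x)|u|^{p-1}u\,\phi\,d(x,t)$ is continuous in the $H^1$-norm thanks to $u\in H^1\cap L^{2p}$ and the two-dimensional Sobolev embedding $H^1(\R\times\T)\embeds L^q$ for all finite $q$, so the identity passes to the limit and \eqref{eq:weak} holds for $\phi_a$, hence for $\phi$.

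The main obstacle is Step~1: one must secure sufficient $L^q$-integrability of $u\in\mathcal{H}$ so that the nonlinearity $\Gamma|u|^{p-1}u/V$ lands in $L^2_V$. This is precisely the point where the paper's $\mathcal{H}\embeds L^{2p}$ embedding in \cref{prop:Lp-embedding} is invoked; once available, \cref{lem:reglin} does the heavy lifting. Steps~2 and~3 are a Parseval identification followed by a routine density argument exploiting the $T/2$-antiperiodic structure.
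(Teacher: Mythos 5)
Your proposal is correct and follows essentially the same route as the paper: the $\calH\embeds L^q(\R\times\T)$ embedding of \cref{prop:Lp-embedding} puts the nonlinearity in $L^2_V$, \cref{lem:reglin} (with its uniqueness) upgrades $u$ to $H^1(\R\times\T)$, and the antiperiodic/periodic splitting plus density of the test functions from \cref{rem:dense} yields \eqref{eq:weak} for all time-periodic $H^1$ test functions. Your version is in fact slightly more explicit than the paper's at the final step, spelling out the density and continuity argument that the paper compresses into ``by density''.
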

\begin{proof}
Since $u\in \mathcal{H}$ is a critical point of $J$ it satisfies \eqref{eq:linprob} with $f=\Gamma(x)|u|^{p-1}u$. Moreover, since $u \in L^q_V(\R \times \T)$ for all $q \in [2,\infty)$ by Proposition \ref{prop:Lp-embedding} and $\Gamma \in L^\infty(\R)$ by assumption \ref{ass:bounded} we see that $f \in L^2_V(\R \times \T)$. Then Lemma~\ref{lem:reglin} applies and we obtain $u\in H^1(\R\times\T)$. Clearly, \eqref{eq:weak} holds for all $\phi\in \mathcal{H}$, which consists of $\frac{T}{2}$-antiperiodic functions. By density, \eqref{eq:weak} holds for all $\frac{T}{2}$-antiperiodic functions in $H^1(\R\times\T)$. Finally, we note that \eqref{eq:weak} trivially holds for all $\frac{T}{2}$-periodic functions in $H^1(\R\times\T)$ since then all three integrands are products of a $\frac{T}{2}$-antiperiodic function with a $\frac{T}{2}$-periodic function and thus integrate to $0$. This finishes the proof of the lemma.
\end{proof}
\begin{lemma}
    Let $u \in \mathcal{H}$ be a weak solution to \eqref{eq:basic}. Then  $u \in H^2(\R \times \T)$ satisfies \eqref{eq:basic} pointwise almost everywhere. 
\end{lemma}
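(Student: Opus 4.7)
My plan is to bootstrap from $u\in H^1(\R\times\T)$ (obtained in the previous lemma) to $u\in H^2(\R\times\T)$ via a temporal difference-quotient argument exploiting translation invariance of $V$ and $\Gamma$ in $t$ and the isometry $\|w\|_\mathcal{H}=\|f\|_{\mathcal{H}'}$ of \cref{lem:reglin}. The bootstrap has two linear steps: first show $\partial_t u\in\mathcal{H}$, then upgrade to $\partial_t u\in H^1$, so that the equation itself delivers $u_{xx}\in L^2$.

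For $h\neq 0$ set $D_t^h w(x,t)\coloneqq h^{-1}(w(x,t+h)-w(x,t))$. Substituting $\phi(x,t-h)$ for $\phi$ in the weak formulation \eqref{eq:weak} and subtracting (legitimate by $T$-periodicity in $t$) shows that $D_t^h u\in\mathcal{H}$ weakly solves \eqref{eq:linprob} with right-hand side $D_t^h f$, where $f=\Gamma|u|^{p-1}u$. The mean-value inequality $|D_t^h f|\leq p\|\Gamma\|_\infty\bigl(|u(\cdot,\cdot+h)|+|u(\cdot,\cdot)|\bigr)^{p-1}|D_t^h u|$ combined with Hölder's inequality with conjugate exponents $r=\tfrac{2(p+1)}{p-1}$ and $s=2$ and the embedding $\mathcal{H}\embeds L^{2(p+1)}(\R\times\T)$ from \cref{prop:Lp-embedding} yields
\begin{align*}
\|D_t^h f\|_{L^{(p+1)/p}(\R\times\T)}\leq C\|u\|_{L^{2(p+1)}}^{p-1}\|D_t^h u\|_{L^2}\leq C\|u\|_{\mathcal{H}}^{p-1}\|u_t\|_{L^2}
\end{align*}
uniformly in $h$. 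The dual embedding $L^{(p+1)/p}\embeds\mathcal{H}'$ converts this into a uniform bound on $\|D_t^h f\|_{\mathcal{H}'}$, and \cref{lem:reglin} delivers $\|D_t^h u\|_{\mathcal{H}}=\|D_t^h f\|_{\mathcal{H}'}\leq C$ uniformly in $h$. Weak compactness in the reflexive Hilbert space $\mathcal{H}$, together with the identification of the weak limit via $D_t^h u\to\partial_t u$ in $L^2$, yields $\partial_t u\in\mathcal{H}$.

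Second step: since now $\partial_t u\in\mathcal{H}\embeds L^q(\R\times\T)$ for every $q<\infty$, a plain Hölder estimate (e.g.\ with exponents $4$ and $4$) gives $\partial_t f=p\Gamma|u|^{p-1}u_t\in L^2(\R\times\T)$. Differentiating \eqref{eq:weak} in $t$ shows that $\partial_t u$ weakly solves \eqref{eq:linprob} with right-hand side $\partial_t f$, so the second statement of \cref{lem:reglin} promotes $\partial_t u$ to $H^1(\R\times\T)$; in particular $u_{tt},u_{xt}\in L^2$. The equation itself then yields $u_{xx}=Vu_{tt}-f\in L^2$, so $u\in H^2(\R\times\T)$. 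Because all weak second derivatives are genuine $L^2$-functions, integration by parts in \eqref{eq:weak} is legitimate and \eqref{eq:basic} holds pointwise almost everywhere on $\R\times\T$.

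The main obstacle is the first step: to produce a uniform bound on $\|D_t^h u\|_{\mathcal{H}}$ while using only the information $u\in H^1$, we must absorb the nonlinear factor $|u|^{p-1}$ by paying with the weaker dual norm $L^{(p+1)/p}\embeds\mathcal{H}'$ (rather than $L^2\embeds\mathcal{H}'$), which is precisely what the embedding $\mathcal{H}\embeds L^{2(p+1)}$ from \cref{prop:Lp-embedding} buys us. All subsequent steps reduce to the linear theory already in place and to an elementary bootstrap.
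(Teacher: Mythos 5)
Your proposal is correct and takes essentially the same route as the paper, which likewise gains $\partial_t^2 u$ and $\partial_x\partial_t u\in L^2(\R\times\T)$ by temporal difference quotients (deferring the details to Lemmas~6.3--6.5 of \cite{maier_reichel_schneider}) and then reads $\partial_x^2 u\in L^2$ off the equation; your version merely makes this self-contained through the isometry of \cref{lem:reglin} and the embedding $\mathcal{H}\embeds L^{2(p+1)}(\R\times\T)$. One small repair: for $1<p<\tfrac{3}{2}$ the Hölder pair $(4,4)$ in your second step does not directly give $|u|^{p-1}u_t\in L^2$ on the unbounded domain (it would need $u\in L^{4(p-1)}$ with $4(p-1)<2$), but the pointwise bound $|u|^{p-1}\le 1+|u|$, or a different admissible exponent pair, fixes this immediately.
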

\begin{proof}
By using difference quotients as is Lemma~6.3 and Lemma~6.4 in \cite{maier_reichel_schneider} we infer $ \partial_t^2 u, \partial_x \partial_t u \in L^2(\R \times \T)$. 
As in the proof of Lemma 6.5 in \cite{maier_reichel_schneider} we conclude $\partial_x^2u \in L^2(\R \times \T)$.
\end{proof}
	

\section{\texorpdfstring{$L^p$}{Lp}-embeddings}
\label{sec:embeddings}

This section is devoted to the proof of the following embedding property for the Hilbert space $\calH$.

\begin{proposition} \label{prop:Lp-embedding} Let \ref{ass:bounded}, \ref{ass:periodic_infinity}, \ref{ass:spectrum}, and \ref{ass:point_spectrum} be fulfilled. Then for every $p\in [2,\infty)$ the embedding $\calH \embeds L^p(\R\times \T)$ is bounded and locally compact, i.e., $\mathcal{H} \embeds L^p(A\times \T)$ is compact for every compact set $A \subset \R$. 
\end{proposition}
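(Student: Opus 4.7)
The plan is mode-by-mode estimation. Fourier-decomposing $u = \sum_{k\in\Zodd} \hat u_k(x) e_k(t)$ and applying Riesz--Thorin interpolation in the time variable (exactly as in the proof of \cref{prop:H_embedding}(ii)) produces
$$
\norm{u}_{L^p(\R\times\T)} \leq C \Bigl(\sum_{k\in\Zodd} \norm{\hat u_k}_{L^p(\R)}^{p'}\Bigr)^{1/p'},\qquad p'=\tfrac{p}{p-1},
$$
so it suffices to control each $\norm{\hat u_k}_{L^p(\R)}$ by $\norm{\hat u_k}_{\mathcal{H}_k}$ with enough decay in $|k|$ that the resulting $k$-series is dominated by $\sum_k\norm{\hat u_k}_{\mathcal{H}_k}^2 = \tfrac12\norm{u}_\mathcal{H}^2$.

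For $p\in[2,4)$ this is already provided by \cref{prop:H_embedding}. The new range is $p\in[4,\infty)$, where the mode-wise Gagliardo--Nirenberg bound used for \cref{prop:H_embedding}(ii) fails: the $k$-series $\sum_k|k|^{-2/(p-2)}$ diverges at $p=4$. For this regime I would invoke the explicit spectral resolution of $L$ from \cref{thm:spectral_measure:explicit}: write $\hat u_k = \int_\R \phi_\lambda\, g_k(\lambda)\,d\mu(\lambda)$ through the generalized eigenfunctions $\phi_\lambda$ of $L$, and identify $\norm{\hat u_k}_{\mathcal{H}_k}^2 = \int_\R |\lambda-k^2\omega^2|\,|g_k(\lambda)|^2\,d\mu(\lambda)$. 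The uniform pointwise bounds on $\phi_\lambda$ from Appendix~\ref{sec:eigenfunction_bounds} combined with Cauchy--Schwarz then give
$$
\norm{\hat u_k}_{L^\infty(\R)}^2 \leq C\Bigl(\int_\R \frac{d\mu(\lambda)}{|\lambda-k^2\omega^2|}\Bigr) \norm{\hat u_k}_{\mathcal{H}_k}^2.
$$

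The crux is to show the right-hand integral is of order $|k|^{-1}\log|k|$. The essential-spectrum part of $d\mu$ has density of order $\lambda^{-1/2}$, a periodic-Sturm--Liouville feature preserved under the compact perturbation in \ref{ass:periodic_infinity}; the separation $|\lambda-k^2\omega^2|\geq \delta\omega|k|$ already derived in the proof of \cref{prop:H_embedding}(i) from \ref{ass:spectrum}, together with the point-spectrum control \ref{ass:point_spectrum}, then furnish the claimed bound. Interpolating this $L^\infty$-estimate with $\norm{\hat u_k}_{L^2_V}\leq C|k|^{-1/2}\norm{\hat u_k}_{\mathcal{H}_k}$ yields, for every $p\in[2,\infty)$,
$$
\norm{\hat u_k}_{L^p(\R)} \leq C|k|^{-1/2}(\log|k|)^{(p-2)/(2p)}\norm{\hat u_k}_{\mathcal{H}_k},
$$
and inserting this into the Riesz--Thorin bound and applying Hölder on the $k$-sum produces a convergent series for every such $p$, establishing the continuous embedding.

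For local compactness on $A\times\T$ with $A\subset\R$ compact, I would truncate to finitely many Fourier modes, $u^{(K)} = \sum_{|k|\leq K}\hat u_k e_k$. Each $u^{(K)}$ is bounded in $H^1(A\times\T)$ by the mode-wise bound $\norm{\hat u_k'}_{L^2}\leq C|k|^{1/2}\norm{\hat u_k}_{\mathcal{H}_k}$ extracted in the proof of \cref{prop:H_embedding}(i), so Rellich--Kondrachov gives $L^p(A\times\T)$-convergent subsequences for each fixed $K$, and a diagonal extraction together with the uniform tail bound $\norm{u-u^{(K)}}_{L^p(\R\times\T)}\to 0$ (from the continuous embedding just proved) concludes. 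The main obstacle throughout is the density estimate for the spectral measure of the perturbed periodic operator $L$: this is where the full strength of \cref{thm:spectral_measure:explicit}, both assumptions \ref{ass:spectrum} and \ref{ass:point_spectrum}, and the $L^2/L^\infty$-comparison of generalized eigenfunctions from Appendix~\ref{sec:eigenfunction_bounds} all enter critically.
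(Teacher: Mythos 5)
The crux of your argument is the claimed bound $\int_\R \frac{\mathrm{d}\mu(\lambda)}{|\lambda-k^2\omega^2|}\lesssim |k|^{-1}\log|k|$, and this is precisely the point that does not follow from the hypotheses or from anything established in the paper. For the absolutely continuous part, \cref{thm:spectral_measure:explicit} together with \cref{lem:applied_uniform_bound} only yields control of the per-band integrals $\int_{I_n^\pm}|\rho_\pm'|\,\mathrm{d}\lambda\le\pi$, so your integral reduces to $\sum_n \dist(k^2\omega^2,I_n^\pm)^{-1}$; the only information the paper has on band locations is the min--max bound $\lambda_j^\pm\gtrsim j^2$, which permits $\mathcal{O}(k)$ band edges whose square roots all sit at distance $\approx\delta$ from $k\omega$, in which case this sum is merely $\mathcal{O}(1)$ and not $\mathcal{O}(|k|^{-1}\log|k|)$ --- to get your rate you would need a unit-interval counting/spacing statement for $\sqrt{\smash{\lambda_j^\pm}}$ (Weyl asymptotics for the weighted periodic problem), which neither you nor the paper proves. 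The point spectrum is worse: \ref{ass:point_spectrum} genuinely does not imply your rate, since $\sum_{\lambda\in\sigma_p(L)}\lambda^{-r}<\infty$ for all $r>\frac12$ is compatible with $\sim K^{1-\eps}$ eigenvalues clustering in a unit $\sqrt{\lambda}$-interval around $K=k\omega$ at $\sqrt{\lambda}$-distance $\delta$, and then $\sum_{\lambda\in\sigma_p(L)}|\lambda-k^2\omega^2|^{-1}\gtrsim k^{-\eps}$. Feeding the rates that are actually available through your interpolation and H\"older-in-$k$ step gives a convergent $k$-series only for $p<4$ (up to $\eps$), i.e.\ nothing beyond \cref{prop:H_embedding}: the Cauchy--Schwarz weight $|\lambda-k^2\omega^2|^{-1}$ (exponent $s=1$) is simply too weak for the new range $p\ge 4$, no matter how the constants are organized.

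The paper circumvents exactly this obstruction by never asking for a per-$k$ decay rate. In \cref{lem:Lp-embedding} the weight is $|\lambda-\nu_k|^{-s}$ with $s=\frac{p}{p-2}>1$ (multiplier $m_k(\lambda)=|\lambda-\nu_k|^{-s/2}$, an $L^\infty$ endpoint bound, interpolation at $\theta=\frac1s$), and the resulting condition \eqref{eq:loc:condition_for_Lp} is a \emph{joint} double sum over $k$ and the spectral parameter. In \cref{thm:Lp-embedding:general} this sum is estimated by exchanging the order of summation, so that the spacing exploited is that of the temporal frequencies $\sqrt{\nu_k}=k\omega$ (which are honestly $\omega$-separated), while for the spatial spectrum only $\lambda_j^\pm\gtrsim j^2$ and $\sum_{\lambda\in\sigma_p(L)}\lambda^{-s/2}<\infty$ (that is, \ref{ass:point_spectrum} with $r=\frac{s}{2}>\frac12$) are needed. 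Your overall skeleton --- explicit spectral measure plus the uniform eigenfunction bounds of Appendix~\ref{sec:eigenfunction_bounds}, an $L^2$--$L^\infty$ interpolation, and truncation/Rellich plus a tail estimate for local compactness --- coincides with the paper's, and the compactness step is fine once boundedness is known; to close the gap you should abandon the fixed-$k$, $s=1$ estimate and instead run the argument with the $s=\frac{p}{p-2}$ weight and the joint summation over $(k,\lambda)$ as in \cref{lem:Lp-embedding,thm:Lp-embedding:general}.
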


As $\calH$ depends strongly on the spectral projections of the operator $L$, we first develop a functional calculus for the operator $L$. 
In Section \ref{subsec:functional_calc} we give a general description of the functional calculus, cf. \cref{thm:functional_calculus:general}. 
In Section \ref{subsec:spectral_measure} we develop a description of the associated spectral measure. This ends in \cref{thm:spectral_measure:explicit}, where we calculate the density of the spectral measure  with respect to the sum of the Lebesgue measure (for the essential spectrum) and the counting measure (for the point spectrum). 
Lastly, in Section \ref{subsec:embeddings} we consider $L^p$-embeddings in a slightly more general setting. Using uniform bounds on (generalized) eigenfunctions and estimates on the spectrum, we show the embedding result \cref{thm:Lp-embedding:general}, of which \cref{prop:Lp-embedding} is a special case (with $\alpha = \beta = 0$). Throughout this section, we will always assume that the potential $V$ satisfies \ref{ass:bounded} and \ref{ass:periodic_infinity}.

\subsection{A functional calculus for \texorpdfstring{$L$}{L}}\label{subsec:functional_calc}

We describe a functional calculus for the spectral problem 
\begin{align}\label{eq:evp}
	-u'' = \lambda V(x) u
	\qquad\text{for}\quad
	x \in \R.
\end{align}
Recall that $V \in L^\infty(\R; \R)$ satisfies $\essinf_\R V > 0$.
We follow \cite{coddington} and begin with some preliminary notation.
\begin{notation}
	We denote the upper half-plane by $\H \coloneqq \set{z \in \C \colon \Im[z] > 0}$, the unit circle by $\S \coloneqq \set{z \in \C \colon \abs{z} = 1}$, the unit circle except two points by $\SwR = \S \setminus \set{-1, 1}$. By a \emph{solution} to \eqref{eq:evp} we mean a function $u \in W^{2,1}_\loc(\R; \C)$ solving \eqref{eq:evp} pointwise almost everywhere. 
	
\end{notation}
\begin{definition}
	For $\lambda \in \C$, we denote by $\Psi_1(x; \lambda), \Psi_2(x; \lambda)$ the solutions to \eqref{eq:evp} with initial data
	\begin{align*}
		\Psi_1(0; \lambda) = 1, 
		\quad
		\Psi_1'(0; \lambda) = 0,
		\qquad
		\Psi_2(0; \lambda) = 0, 
		\quad 
		\Psi_2'(0; \lambda) = 1
	\end{align*}
    and write $\Psi(x;\lambda) = \begin{pmatrix} \Psi_1(x;\lambda) \\ \Psi_2(x;\lambda) \end{pmatrix}$.
	We further define the Wronskian 
	\begin{align*}
		W(f,g) \coloneqq f g' - f' g
		\qquad\text{so that}\qquad
		W(f,g) \big\vert_a^b 
		=\int_a^b \left(L f \cdot g - f \cdot L g\right) \der[V] x.
	\end{align*}
	In particular, $W(f, g)$ is constant if $f, g$ both solve $L u = \lambda u$. 
\end{definition}

The following three results about the solutions of \eqref{eq:evp} can be found in \cite[Chapter 9]{coddington} for the case $V=1$ and with $-\dx p(x) \dx + q(x)$ in place of $-\dxsquare$, but proofs can easily be adapted to the weighted setting. 
More precisely, the following three results correspond to Theorem~2.4, Theorem~2.3, and Section~5 in \cite{coddington}. 
Related is also the work \cite{bennewitz_everitt_chapter} where the authors treat the weighted setting on the half-line.

\begin{theorem} \label{thm:limit_point}
	$L$ is of ``limit-point type'' at $+\infty$ and $-\infty$. That is, for any $\lambda \in \C$ at most one linearly independent solution of \eqref{eq:evp} lies in $L_V^2([0, \infty))$, and the same holds for $L_V^2((-\infty, 0])$. 
\end{theorem}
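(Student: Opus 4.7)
The plan is to exploit the asymptotic periodicity from assumption \ref{ass:periodic_infinity} together with classical Floquet theory. Since $[0, R^+]$ has finite length and $V$ is bounded, any solution $u$ of \eqref{eq:evp} lies in $L_V^2([0, \infty))$ if and only if it lies in $L_V^2([R^+, \infty))$. On this asymptotic half-line the equation reads $-u'' = \lambda V^+_\per(x) u$ with an $X^+$-periodic coefficient, so it suffices to analyse this purely periodic problem.

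For the periodic equation I would introduce the monodromy matrix $M^+(\lambda)$ obtained by shifting the phase vector $(u, u')^\top$ over one period $X^+$. Because the Wronskian of any two solutions is constant (the equation is in Sturm-Liouville form with leading coefficient $1$), one has $\det M^+(\lambda) = 1$, so its eigenvalues satisfy $\mu_1 \mu_2 = 1$. I would then run through the standard Floquet trichotomy. In the \emph{hyperbolic case} $|\mu_1| < 1 < |\mu_2|$ the decaying Floquet solution $u_1(x) = \mu_1^{x/X^+} p_1(x)$, with $p_1$ bounded and $X^+$-periodic, lies in $L_V^2([R^+, \infty))$, while the growing partner does not; any non-trivial linear combination inherits the exponential growth and so is excluded. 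In the \emph{elliptic case} $|\mu_1| = |\mu_2| = 1$ with $M^+(\lambda)$ diagonalizable, both Floquet solutions are bounded and quasi-periodic, and no non-trivial combination lies in $L_V^2$. In the \emph{parabolic (band-edge) case} where $\mu_1 = \mu_2 \in \{\pm 1\}$ and $M^+(\lambda)$ is a non-trivial Jordan block, Floquet's theorem produces one (anti)periodic bounded solution and a second solution containing a linear factor $x$, so again no non-trivial combination is integrable.

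Since the space of solutions of \eqref{eq:evp} on $[0, \infty)$ is two-dimensional and the restriction to $[R^+, \infty)$ is a linear isomorphism onto the solution space of the periodic equation there, the trichotomy above forces at most one linearly independent solution of \eqref{eq:evp} to lie in $L_V^2([0, \infty))$. The argument at $-\infty$, using $V^-_\per$ and $R^-$, is completely symmetric.

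The main point that requires care is the parabolic band-edge case, where one must invoke the sharper form of Floquet's theorem (the polynomial-in-$x$ factor for non-trivial Jordan blocks) to rule out an $L_V^2$ second solution. Once this is in place the trichotomy is exhaustive and the limit-point property follows immediately.
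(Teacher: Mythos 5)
Your argument is correct, but it proves the theorem by a genuinely different route than the paper. The paper does not argue via periodicity at all: it imports the classical Weyl limit-point/limit-circle theory from \cite[Chapter~9, Theorem~2.4]{coddington} (adapted to the weight $V$), whose underlying mechanism is the Weyl alternative -- either all solutions are square-integrable near $+\infty$ for every $\lambda$, or at most one is for every $\lambda$ -- combined with the observation that the first alternative fails because, e.g. for $\lambda=0$, the solutions $1$ and $x$ are not in $L^2_V([0,\infty))$ since $\essinf_\R V>0$. That argument needs only \ref{ass:bounded}, so the theorem holds for arbitrary bounded potentials with positive infimum. You instead invoke \ref{ass:periodic_infinity} and run the Floquet trichotomy for the monodromy matrix on the periodic tail; this is legitimate here because \ref{ass:periodic_infinity} is a standing assumption of the section, and it buys more explicit information (the exponentially decaying Floquet solution in the hyperbolic case), essentially anticipating what the paper later proves in \cref{lem:monodromy} and \cref{lem:rho}. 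The price is less generality and a few details you should not leave implicit: in the elliptic case the claim that no nontrivial combination $c_1u_1+c_2u_2$ of the two bounded Bloch solutions is in $L^2_V$ needs the standard argument that $\|c_1\mu_1^n u_1+c_2\mu_2^n u_2\|_{L^2_V(\mathrm{cell})}\geq c\bigl(|c_1|+|c_2|\bigr)$ uniformly in $n$ by equivalence of norms on the two-dimensional span over one period, so the per-cell contributions do not sum; a similar per-cell lower bound handles the parabolic Jordan-block case, which you correctly single out as the delicate one. With these points made explicit your proof is complete; the paper's citation-based route is simply shorter and independent of the tail periodicity.
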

\begin{theorem}\label{thm:existence_of_m}
	There exist holomorphic functions $m_\pm \colon \C\setminus\R \to \C$ such that for each $\lambda \in \C \setminus \R$, the solution
	\begin{align*}
		u = \Psi_1(\impvar; \lambda) + m_\pm(\lambda) \Psi_2(\impvar; \lambda) 
	\end{align*}
	lies in $L^2(\pm [0, \infty))$. Moreover, we have the identity
	\begin{align*}
		\Im[m_\pm(\lambda)] = \Im[\lambda] \int_0^{\pm\infty} \abs{u}^2 \der[V]{x}.
	\end{align*}
\end{theorem}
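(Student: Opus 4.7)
My plan is to prove \cref{thm:existence_of_m} by the classical Weyl-Titchmarsh nested disk construction, focusing on $m_+$ and $\lambda \in \H$; extension to $\lambda$ in the lower half-plane is by Schwarz reflection $m_+(\lambda) \coloneqq \overline{m_+(\bar\lambda)}$, and $m_-$ is treated symmetrically on $(-\infty, 0]$. Fix $\lambda \in \H$ and $b > 0$, and for $m \in \C$ consider $u_m \coloneqq \Psi_1(\impvar; \lambda) + m \Psi_2(\impvar; \lambda)$. From $-u_m'' = \lambda V u_m$ one obtains $\dx W(u_m, \overline{u_m}) = 2\ii \Im[\lambda] V \abs{u_m}^2$, and combined with $W(u_m, \overline{u_m})(0) = \bar m - m = -2\ii \Im[m]$, integration yields the key identity
\begin{align*}
	\tfrac{1}{2\ii} W(u_m, \overline{u_m})(b) = \Im[\lambda] \int_0^b \abs{u_m}^2 \der[V] x - \Im[m].
\end{align*}

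For each real $\alpha$ the boundary condition $\cos\alpha \, u_m(b) + \sin\alpha \, u_m'(b) = 0$ solves for $m = m_b(\alpha; \lambda)$ as a fractional-linear function of $\tan\alpha \in \R \cup \set{\infty}$, so as $\alpha$ ranges the resulting values trace a circle $\partial D_b(\lambda) \subset \C$. Because such a real boundary condition forces $W(u_m, \overline{u_m})(b) = 0$, the identity above characterizes the closed Weyl disk as
\begin{align*}
	D_b(\lambda) = \Set{ m \in \C : \Im[\lambda] \int_0^b \abs{u_m}^2 \der[V] x \leq \Im[m] },
\end{align*}
and monotonicity of the integral gives the nesting $D_{b'}(\lambda) \subset D_b(\lambda)$ whenever $b' > b$. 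By \cref{thm:limit_point} the radii $r_b(\lambda)$ of $D_b(\lambda)$ shrink to $0$ as $b \to \infty$, since two distinct limit points would yield two linearly independent $L^2_V$-solutions. Hence $\bigcap_{b>0} D_b(\lambda) = \set{m_+(\lambda)}$, and with $u_+ \coloneqq \Psi_1(\impvar; \lambda) + m_+(\lambda) \Psi_2(\impvar; \lambda)$, the inclusion $m_+(\lambda) \in D_b(\lambda)$ for every $b$ gives $u_+ \in L^2_V([0,\infty))$ together with $\int_0^\infty \abs{u_+}^2 \der[V] x \leq \Im[m_+(\lambda)]/\Im[\lambda]$.

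The matching lower bound follows by picking any sequence $m_n \in \partial D_{b_n}(\lambda)$ with $b_n \to \infty$; the shrinking radii force $m_n \to m_+(\lambda)$, and the boundary equality $\Im[m_n] = \Im[\lambda] \int_0^{b_n} \abs{u_{m_n}}^2 \der[V] x$ passes to the limit using local uniform convergence $u_{m_n} \to u_+$ together with the $L^2_V$-bound already obtained, yielding the identity $\Im[m_+(\lambda)] = \Im[\lambda] \int_0^\infty \abs{u_+}^2 \der[V] x$. Holomorphy of $m_+$ then follows from a normal-family argument: for each fixed $\alpha_0 \in \R$ the map $\lambda \mapsto m_b(\alpha_0; \lambda)$ is holomorphic in $\H$ as a rational expression in the entire quantities $\Psi_j(b; \impvar), \Psi_j'(b; \impvar)$, and the inclusion $m_b(\alpha_0; \lambda) \in D_{b_0}(\lambda)$ for $b \geq b_0$ combined with local boundedness of $D_{b_0}(\lambda)$ on compacta of $\H$ yields a normal family whose pointwise limit $m_+$ is holomorphic by Vitali's theorem. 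The main technical obstacle I anticipate is making the shrinkage $r_b(\lambda) \to 0$ locally uniform on compacta of $\H$, so that both the limiting identity and the local boundedness in the normal-family argument are under control; this reduces to quantitative a priori control of $\Psi_j(b; \lambda)$ uniformly in $\lambda$ on compacta, leveraging $\Im[\lambda] > 0$ to upgrade the qualitative limit-point property of \cref{thm:limit_point} into a quantitative rate of convergence.
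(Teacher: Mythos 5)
Your construction is the classical Weyl--Titchmarsh nested-disk argument, and this is precisely the route the paper itself takes: it gives no proof of \cref{thm:existence_of_m} but cites \cite[Chapter~9]{coddington}, noting only that the arguments adapt to the weight $V$. Up to the collapse of the disks to the single point $m_+(\lambda)$ your steps are correct (the Wronskian identity, the disk description, the nesting, the use of \cref{thm:limit_point}). The genuine gap is in your justification of the equality $\Im[m_+(\lambda)] = \Im[\lambda]\int_0^\infty \abs{u_+}^2 \der[V]x$. Writing $u_{m_n} - u_+ = (m_n - m_+)\,\Psi_2(\impvar;\lambda)$, your boundary identity reads $\Im[m_n] = \Im[\lambda]\int_0^{b_n}\abs{u_+ + (m_n - m_+)\Psi_2}^2 \der[V]x$, and in the limit-point case $\Psi_2(\impvar;\lambda)\notin L^2_V([0,\infty))$. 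Local uniform convergence together with the $L^2_V$-bound on $u_+$ only yields, via Fatou, the inequality $\Im[\lambda]\int_0^\infty\abs{u_+}^2\der[V]x \leq \Im[m_+(\lambda)]$, i.e.\ the direction you already have; it gives no control of the term $\abs{m_n - m_+}^2\int_0^{b_n}\abs{\Psi_2}^2\der[V]x$ over the growing interval. The missing ingredient is the quantitative Weyl-radius bound $r_b(\lambda) = \bigl(2\,\Im[\lambda]\int_0^b\abs{\Psi_2}^2\der[V]x\bigr)^{-1}$: with $\abs{m_n - m_+}\leq 2 r_{b_n}$ the offending cross and quadratic terms are $\landauO\bigl((\int_0^{b_n}\abs{\Psi_2}^2\der[V]x)^{-\nicefrac12}\bigr)$, and they vanish precisely because the limit-point property forces this integral to diverge. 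Equivalently, one can show that $W(u_+, \overline{u_+})(b)\to 0$ at a limit-point endpoint and apply your key identity with $m = m_+$ directly. Either way, this step must be made explicit; as written it would fail.

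By contrast, the locally uniform shrinkage of $r_b$ that you single out as the main technical obstacle is not actually needed. For fixed $\alpha_0$ you have $m_b(\alpha_0;\lambda)\to m_+(\lambda)$ pointwise on $\H$, and for $b\geq b_0$ the value $m_b(\alpha_0;\lambda)$ lies in $D_{b_0}(\lambda)$, whose center and radius depend continuously on $\lambda$; this gives local boundedness on compacta of $\H$, and the Vitali--Montel theorem then yields holomorphy of the pointwise limit. So the only quantitative input your proof requires is the radius formula above, used pointwise in $\lambda$; the Schwarz reflection to the lower half-plane and the symmetric treatment of $m_-$ are fine as indicated.
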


Next we present the definition of a functional calculus for the operator $L$. It uses the Hilbert space $L^2(\mu)$ consisting of functions $\R \to \C^2$ which are square-integrable with respect to a matrix-valued function $\mu$ defined on $\R$. See \cref{sec:vector_L2} for a definition and basic properties of $L^2(\mu)$.

\begin{theorem}\label{thm:functional_calculus:general}
	There exists an increasing function $\mu \colon \R \to \R^{2 \times 2}$, called the \emph{spectral measure}, such that the map $T \colon L^2_V(\R; \C) \to L^2(\mu)$ given by
	\begin{align*}
		T[f](\lambda) 
		\coloneqq \int_\R f(x) \Psi(x; \lambda) \der[V]x
	\end{align*}
	for compactly supported $f \in L^2_V(\R; \C)$ is an isometric isomorphism, with inverse given by
	\begin{align*}
		T^{-1}[g](x)
		= \int_\R g_i(\lambda) \Psi_j(x; \lambda) \der \mu_{ij}(\lambda)
	\end{align*}
	for compactly supported $g \in L^2(\mu)$, where we use Einstein summation convention.
	Moreover, at points $\lambda_1, \lambda_2 \in \R$ where $\mu$ is continuous the increment of the spectral measure can be computed as
	\begin{align*}
		\mu(\lambda_2) - \mu(\lambda_1)
		= \lim_{\eps \to 0+} \int_{\lambda_1}^{\lambda_2} M(s + \ii \eps) \der s
	\end{align*}
	where
	\begin{align*}
		M \coloneqq \frac{1}{\pi} \Im \Bigl[ (m_- - m_+)^{-1} \begin{pmatrix}
			1 & \frac12 (m_- + m_+) 
			\\ \frac12 (m_- + m_+) & m_- m_+
		\end{pmatrix} \Bigr].
	\end{align*}
\end{theorem}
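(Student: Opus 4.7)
The plan is to follow the Weyl--Kodaira approach of \cite[Chapter~9]{coddington}, adapting the classical Lebesgue-measure setting to the weight $V(x)\der x$. Self-adjointness of $L$ on $L^2_V(\R)$ with domain $H^2(\R)$ follows from \ref{ass:bounded} together with the limit-point property \cref{thm:limit_point}, which legitimizes the use of Stone's formula below.

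First I would construct the resolvent kernel. For $\lambda \in \C \setminus \R$, set $\psi_\pm(\impvar;\lambda) \coloneqq \Psi_1(\impvar;\lambda) + m_\pm(\lambda) \Psi_2(\impvar;\lambda)$; then $\psi_\pm \in L^2_V(\pm[0,\infty))$ by \cref{thm:existence_of_m}, and a short computation together with $W(\Psi_1,\Psi_2) = 1$ gives $W(\psi_-,\psi_+) = m_+(\lambda) - m_-(\lambda)$, which is non-zero since $\Im m_+ \neq \Im m_-$ on $\H$. This yields the Green's kernel
\begin{align*}
	G(x,y;\lambda) = \frac{1}{m_+(\lambda) - m_-(\lambda)} \begin{cases} \psi_-(x;\lambda) \psi_+(y;\lambda), & x \leq y,\\ \psi_+(x;\lambda) \psi_-(y;\lambda), & x \geq y, \end{cases}
\end{align*}
so that $((L - \lambda)^{-1} f)(x) = \int_\R G(x,y;\lambda) f(y) V(y) \der y$ for $f \in L^2_V(\R)$. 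For compactly supported $f,g \in L^2_V(\R)$, expanding $\psi_\pm$ in the basis $\Psi_1,\Psi_2$ rewrites $\ip{(L-\lambda)^{-1} f}{g}_{L^2_V}$ as a bilinear form in $T[f](\lambda)$ and $\overline{T[g](\lambda)}$ whose coefficient matrix is $(m_-(\lambda) - m_+(\lambda))^{-1}$ times the $2\times 2$ matrix in the statement, so that its imaginary part divided by $\pi$ is precisely $M(\lambda)$.

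Next, Stone's formula at continuity points $\lambda_1 < \lambda_2$ of $\ip{P_\lambda f}{g}_{L^2_V}$ gives
\begin{align*}
	\ip{(P_{\lambda_2} - P_{\lambda_1}) f}{g}_{L^2_V} = \lim_{\eps \to 0+} \frac{1}{\pi} \int_{\lambda_1}^{\lambda_2} \Im \ip{(L - (s + \ii \eps))^{-1} f}{g}_{L^2_V} \der s.
\end{align*}
Combining with the previous step, the right-hand side equals $\lim_{\eps \to 0+} \int_{\lambda_1}^{\lambda_2} \ip{M(s + \ii \eps) T[f](s)}{T[g](s)}_{\C^2} \der s$, and defining $\mu(\lambda_2) - \mu(\lambda_1)$ as the matrix-valued limit of $\int_{\lambda_1}^{\lambda_2} M(s + \ii \eps) \der s$ yields the stated explicit formula for the spectral measure. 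Sending $\lambda_1 \to -\infty$ and $\lambda_2 \to +\infty$ gives the Parseval identity $\norm{f}_{L^2_V}^2 = \norm{T[f]}_{L^2(\mu)}^2$ on compactly supported $f$, which by density extends $T$ to a global isometry into $L^2(\mu)$. The inversion formula for $T^{-1}$ follows by conjugating the resolvent $(L-\lambda)^{-1}$ through $T$ into a multiplication operator and specializing, or equivalently by verifying it on the dense set of resolvent images $(L-\bar\lambda)^{-1} g$; surjectivity is then a consequence of density of the range combined with the isometry.

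The main obstacle will be the rigorous justification of the Stieltjes inversion in the matrix-valued setting, namely that $s \mapsto \int^s M(\sigma + \ii \eps) \der \sigma$ converges as $\eps \to 0+$ to a well-defined monotone matrix-valued function $\mu$, and that the resulting $L^2(\mu)$-space coincides with the framework laid out in Appendix~\ref{sec:vector_L2}. This is typically handled by invoking the Herglotz representation for matrix-valued Nevanlinna functions applied to $\lambda \mapsto (m_-(\lambda) - m_+(\lambda))^{-1}\bigl(\begin{smallmatrix}1 & \tfrac12(m_-+m_+)\\ \tfrac12(m_-+m_+) & m_- m_+\end{smallmatrix}\bigr)$. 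A secondary technical point is treating jump points of $\mu$ corresponding to eigenvalues: this can be handled either by smoothing and limiting, or by first proving the analogous expansion theorem on $[-b,b]$ with self-adjoint separated boundary conditions (where the spectral measure is purely atomic) and then passing to the weak limit of the resulting matrix-valued measures as $b \to \infty$, exactly as in \cite[Chapter~9]{coddington}.
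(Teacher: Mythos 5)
The paper supplies no proof of this theorem: it points to \cite[Chapter~9, Section~5]{coddington} and remarks that the arguments adapt to the weighted problem. Your reconstruction via the resolvent kernel, Stone's formula and Stieltjes inversion is exactly the Weyl--Kodaira program that reference follows, and the half-line truncation you sketch at the end is how Coddington--Levinson actually carry it out, so the strategy matches. Two points in the proposal need attention.

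First, a sign error in the Green's kernel: it should read $G(x,y;\lambda) = \frac{1}{m_-(\lambda)-m_+(\lambda)}\,\psi_-(\min(x,y);\lambda)\,\psi_+(\max(x,y);\lambda)$. Check against $V\equiv1$: there $\psi_\pm(x)=\ee^{\pm\ii\sqrt\lambda x}$, $m_\pm=\pm\ii\sqrt\lambda$, so $\frac{\psi_-\psi_+}{m_--m_+}=\frac{\ii}{2\sqrt\lambda}\ee^{\ii\sqrt\lambda\abs{x-y}}$, which is the correct kernel of $(-\partial_x^2-\lambda)^{-1}$, whereas the factor $(m_+-m_-)^{-1}$ you wrote gives the opposite sign. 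Your later assertion that the bilinear form carries the factor $(m_--m_+)^{-1}$ is the right one, so this is a slip, but as written the two statements are inconsistent.

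Second, and more substantively, the step asserting that expanding $\psi_\pm$ in $\Psi_1,\Psi_2$ ``rewrites $\ip{(L-\lambda)^{-1}f}{g}_{L^2_V}$ as a bilinear form in $T[f]$ and $\overline{T[g]}$'' is not correct as stated: $G$ depends on $(x,y)$ through $\min$ and $\max$ and is not of tensor-product form $\Psi_i(x)N_{ij}(\lambda)\Psi_j(y)$. The missing observation is the decomposition
\begin{align*}
    G(x,y;\lambda) = \frac{\psi_-(x;\lambda)\psi_+(y;\lambda)+\psi_+(x;\lambda)\psi_-(y;\lambda)}{2\bigl(m_-(\lambda)-m_+(\lambda)\bigr)} + \frac{\operatorname{sgn}(x-y)}{2}\bigl(\Psi_1(x;\lambda)\Psi_2(y;\lambda)-\Psi_2(x;\lambda)\Psi_1(y;\lambda)\bigr),
\end{align*}
whose first summand is $\Psi_i(x;\lambda)N_{ij}(\lambda)\Psi_j(y;\lambda)$ with $N(\lambda)=(m_--m_+)^{-1}\bigl(\begin{smallmatrix}1 & \frac12(m_-+m_+)\\ \frac12(m_-+m_+) & m_-m_+\end{smallmatrix}\bigr)$, and whose second summand is entire in $\lambda$ and real-valued for real $\lambda$ (the $\Psi_j(x;\impvar)$ are entire with real Taylor coefficients). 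The second summand therefore drops out after taking $\Im$ and the limit $\eps\to0^+$ in Stone's formula, and only after this cancellation does one arrive at $M(\lambda)=\frac1\pi\Im N(\lambda)$ as the density of the spectral matrix. With this step supplied, the Herglotz / Stieltjes-inversion or half-line truncation arguments you outline close the proof as in the classical unweighted case.
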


Let us show that $T$ as above diagonalizes the differential operator $L$.

\begin{lemma}\label{lem:calculus_diagonalizes_L0}
	Let $f \in L^2_V(\R; \C)$. Then $f \in H^2(\R; \C)$ if and only if $\Id_\R \cdot T[f] \in L^2(\mu)$. In this case, $T[L f] = \Id_\R \cdot T[f]$ holds. 
\end{lemma}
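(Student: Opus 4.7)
The plan is to first verify the diagonalization identity on a dense subspace by integration by parts, then pass to the limit using the isometry of $T$, and finally obtain the converse direction via a duality argument. The two structural ingredients are the eigen-equation $-\Psi''(\impvar;\lambda) = \lambda V(\impvar) \Psi(\impvar;\lambda)$ and the formal symmetry of $L$ against compactly supported test functions.

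For compactly supported $f \in H^2(\R;\C)$, the function $Lf = -V^{-1} f''$ is also compactly supported and lies in $L^2_V(\R;\C)$. Two integrations by parts (boundary terms vanish by compact support of $f$) together with the eigen-equation yield
\[
T[Lf](\lambda) \;=\; -\int_\R f''\Psi\der x \;=\; -\int_\R f\,\Psi''\der x \;=\; \lambda \int_\R f\,\Psi\der[V] x \;=\; \lambda\,T[f](\lambda).
\]
For a general $f\in H^2(\R;\C)$, I would approximate by compactly supported $f_n\in H^2$ (via cutoff) so that $f_n \to f$ in $H^2$; then $f_n\to f$ and $Lf_n\to Lf$ in $L^2_V(\R;\C)$, the latter because $\|Lf_n-Lf\|_{L^2_V}^2 \le (\essinf V)^{-1}\|f_n''-f''\|_{L^2}^2$. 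The isometry of $T$ transports both convergences into $L^2(\mu)$, and since $\Id_\R\cdot T[f_n] = T[Lf_n]$ by the compactly supported case, extracting a $\mu$-a.e.\ convergent subsequence of $T[f_n]$ identifies $\lim\Id_\R\cdot T[f_n]$ as $\Id_\R\cdot T[f]$. Hence $\Id_\R\cdot T[f]\in L^2(\mu)$ and equals $T[Lf]$.

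For the converse, suppose $f\in L^2_V(\R;\C)$ satisfies $\Id_\R\cdot T[f]\in L^2(\mu)$ and set $g\coloneqq T^{-1}[\Id_\R\cdot T[f]]\in L^2_V(\R;\C)$. Against any $\phi\in C_c^\infty(\R;\C)$, I would pair $g$ with $\phi$ in $L^2_V$, transfer to $L^2(\mu)$ via the isometry, move the real multiplier $\Id_\R$ from $T[f]$ onto $T[\phi]$, apply the already established identity to $\phi$, and return to $L^2_V$:
\[
\int_\R g\overline{\phi}\der[V] x \;=\; \langle \Id_\R T[f], T[\phi]\rangle_{L^2(\mu)} \;=\; \langle T[f], T[L\phi]\rangle_{L^2(\mu)} \;=\; \int_\R f\overline{L\phi}\der[V] x \;=\; -\int_\R f\overline{\phi''}\der x.
\]
This says $-f'' = V g$ distributionally. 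Since $V$ is bounded above and below, $L^2_V(\R;\C) = L^2(\R;\C)$, so both $f$ and $V g$ lie in $L^2(\R;\C)$; an elementary weak-derivative argument then gives $f\in H^2(\R;\C)$, and the forward direction supplies $T[Lf] = \Id_\R\cdot T[f]$.

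The main obstacle is that multiplication by $\Id_\R$ is unbounded on $L^2(\mu)$, so the identity $T[Lf] = \Id_\R\cdot T[f]$ cannot be propagated from the dense subspace by a pure isometry argument; the natural spectral-side domain of this multiplication must be independently shown to coincide with $T(H^2(\R;\C))$. The converse direction carries this weight — it is the duality step, combining the formal symmetry of $L$ with the isometry of $T$, that matches the two domains and closes the equivalence.
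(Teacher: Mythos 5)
Your forward direction coincides with the paper's: symmetry of $L$ against $\Psi(\impvar;\lambda)$ for compactly supported $f\in H^2(\R;\C)$, then approximation (the paper only writes ``for general $f$, we argue by approximation''; you supply the cutoff argument). For the converse you take a genuinely different route. The paper pairs $f$ with $Lg$ for arbitrary $g\in H^2(\R;\C)$, moves the multiplier $\Id_\R$ across the $L^2(\mu)$-inner product exactly as you do, and then invokes the self-adjointness of $L\colon H^2(\R)\subset L^2_V(\R)\to L^2_V(\R)$ to conclude $f\in H^2$ and $Lf=T^{-1}[\Id_\R\cdot T[f]]$ — i.e.\ it outsources the identification of the domain to a known operator-theoretic fact. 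You instead test only against $\phi\in C_c^\infty(\R;\C)$, obtain the distributional identity $-f''=V\,T^{-1}[\Id_\R\cdot T[f]]$, and conclude $f\in H^2(\R;\C)$ from the elementary fact that $f\in L^2(\R)$ with $f''\in L^2(\R)$ forces $f\in H^2(\R)$ (Fourier transform), using that $L^2_V(\R)=L^2(\R)$ because $V$ is bounded above and away from zero. Your version is more self-contained — it does not presuppose that the self-adjoint domain of $L$ is $H^2$, only the forward identity for smooth compactly supported test functions — at the cost of reproving a small piece of that fact by hand; the paper's version is shorter because self-adjointness of $L$ on $H^2$ is already assumed and used throughout. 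One small caveat in your approximation step: elements of $L^2(\mu)$ are vector-valued functions identified modulo the possibly degenerate matrix density, so ``extracting a $\mu$-a.e.\ convergent subsequence of $T[f_n]$'' must be read in the pointwise seminorm $v\mapsto\bigl(v_i\overline{v_j}\,\dv{\nu_{ij}}{\nu}\bigr)^{1/2}$; with that reading the identification of the limit goes through, or you can sidestep it by applying the bounded multipliers $\bbone_{[-R,R]}\Id_\R$ to the convergence $T[Lf_n]\to T[Lf]$ and letting $R\to\infty$.
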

\begin{proof}
	\textit{Part 1:}
	First let $f \in H^2(\R; \C)$ be compactly supported. Then 
	\begin{align*}
		T[L f](\lambda)
		&= \int_\R L f(x) \cdot \Psi(x; \lambda) \der[V]x
		= \int_\R f(x) \cdot L \Psi(x; \lambda) \der[V]x
		\\ &= \lambda \int_\R f(x) \Psi(x; \lambda) \der[V]{x}
		= \lambda T[f](\lambda).
	\end{align*}
	Thus $\Id_\R \cdot T[f] \in L^2(\mu)$ and $T[L f] = \Id_\R\cdot T[f]$. For general $f$, we argue by approximation.

	\textit{Part 2:} 
	Now assume that $\lambda T[f] \in L^2(\mu)$, and let $g \in H^2(\R;\C)$. Then we have
	\begin{align*}
		\int_\R f \cdot L g \der[V]{x}
		&= \int_\R T_i[f] (\lambda) \cdot \overline{T_j[L g] (\lambda)} \der \mu_{ij}(\lambda)
		= \int_\R T_i[f] (\lambda) \cdot \overline{\lambda T_j[g](\lambda)} \der \mu_{ij}(\lambda)
		\\ &= \int_\R \lambda T_i[f] (\lambda) \cdot \overline{T_j[g] (\lambda)} \der \mu_{ij}(\lambda)
		= \int_\R T^{-1}[\Id_\R \cdot T[f]] \cdot g \der[V]{x}.
	\end{align*}
	Since $L$ is self-adjoint, we have $f \in H^2(\R; \C)$ and $L f = T^{-1}[\Id_\R\cdot T[f]]$.
\end{proof}

\subsection{Description of the spectral measure}\label{subsec:spectral_measure}
Recall that $V$ is periodic on $[R^+, \infty)$ with period $X^+$ and also on $(-\infty, R^-]$ with period $X^-$. In this section, we use this property to give a better description of the spectral measure $\mu$ in \cref{thm:functional_calculus:general}. 

\begin{remark}
	One can also give a description of the spectral measure in the way we do below when $V$ is asymptotically periodic at $\pm\infty$ provided that the defect from the periodic limiting profiles is integrable.
	We avoid this because it creates additional difficulties when considering $L^p$-embeddings.
\end{remark}

\begin{definition}
	Define the propagation matrix
	\begin{align*}
		P(y,x; \lambda) = \begin{pmatrix}
			\Psi_1(y; \lambda) & \Psi_2(y; \lambda)
			\\ \Psi_1'(y; \lambda) & \Psi_2'(y; \lambda)
		\end{pmatrix}
		\cdot 
		\begin{pmatrix}
			\Psi_1(x; \lambda) & \Psi_2(x; \lambda)
			\\ \Psi_1'(x; \lambda) & \Psi_2'(x; \lambda)
		\end{pmatrix}^{-1}
	\end{align*}
	so that any solution $u$ of \eqref{eq:evp} satisfies
	\begin{align*}
		\begin{pmatrix}
			u(y) \\ u'(y)
		\end{pmatrix} = P(y, x; \lambda) \begin{pmatrix}
			u(x) \\ u'(x)
		\end{pmatrix}
	\end{align*}
	for all $x, y \in \R$. Further define the monodromy matrices
	\begin{align*}
		P^\pm(\lambda) = P(0, x; \lambda) P(x\pm X^\pm,0;\lambda), 
	\end{align*}
	where $x \gtrless R^\pm$.
\end{definition}
\begin{remark}
	The monodromy matrix $P^+(\lambda)$ is a propagation matrix along one period expressed in terms of values $\begin{pmatrix}
		u(0) \\ u'(0)
	\end{pmatrix}$ at $x = 0$:
	\begin{align*}
		P^+(\lambda)
		= P(0, x; \lambda) P(x + X^+, x; \lambda) P(0, x; \lambda)^{-1}.
	\end{align*}
	The same is true for $P^-$, except we move one period towards $-\infty$.
\end{remark}

\begin{lemma}\label{lem:monodromy}
	The propagation and monodromy matrices have determinant $1$. For each $\lambda \in \C \setminus \R$, $P^\pm(\lambda)$ has an eigenvalue $\rho$ with $\abs{\rho} < 1$, and the eigenspace is  $\lspan\{\begin{pmatrix}1 \\ m_\pm(\lambda)\end{pmatrix}\}$.
\end{lemma}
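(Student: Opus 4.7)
The plan is to extract the determinant claim from Wronskian invariance, and to obtain the eigenvalue/eigenspace claim by combining the $L^2_V$-decaying solution supplied by \cref{thm:existence_of_m} with Floquet theory for the periodic tail of $V$, using the limit-point property \cref{thm:limit_point} to pin down the Floquet multiplier. I will treat only the $+$ case; the $-$ case is symmetric.

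For the determinants, the matrix whose columns are $\begin{pmatrix} \Psi_j(y;\lambda) \\ \Psi_j'(y;\lambda) \end{pmatrix}$ has determinant $W(\Psi_1,\Psi_2)(y)$, which is constant in $y$ since $\Psi_1,\Psi_2$ solve the same linear second-order ODE, and equals $1$ at $y=0$ by the chosen initial data. Both factors in the defining formula of $P(y,x;\lambda)$ therefore have determinant $1$, so $\det P(y,x;\lambda)=1$, and the monodromy matrices inherit this property as products of propagation matrices.

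For the eigenstructure I first fix any $x > R^+$ and use the composition rule $P(x+X^+,0;\lambda)=P(x+X^+,x;\lambda)\,P(x,0;\lambda)$ to rewrite
\[
P^+(\lambda) \;=\; P(0,x;\lambda)\,P(x+X^+,x;\lambda)\,P(x,0;\lambda),
\]
which realizes $P^+(\lambda)$ as a conjugate of the one-period propagation matrix. Let $u \coloneqq \Psi_1(\impvar;\lambda)+m_+(\lambda)\Psi_2(\impvar;\lambda)$, so that by \cref{thm:existence_of_m} we have $u \in L^2_V([0,\infty))$ with initial data $\begin{pmatrix} u(0) \\ u'(0) \end{pmatrix} = \begin{pmatrix} 1 \\ m_+(\lambda) \end{pmatrix}$. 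By \ref{ass:periodic_infinity} the shifted function $u(\impvar + X^+)$ is again a solution of $-w''=\lambda V w$ on $[R^+,\infty)$ and still lies in $L^2_V$ near $+\infty$, so \cref{thm:limit_point} forces $u(y+X^+)=\rho\,u(y)$ for some $\rho \in \C$ and all $y \geq R^+$. Evaluating at $y=x$ shows that $P(x,0;\lambda)\begin{pmatrix} 1 \\ m_+(\lambda) \end{pmatrix}$ is an eigenvector of $P(x+X^+,x;\lambda)$ with eigenvalue $\rho$, and multiplying from the left by $P(0,x;\lambda)$ transports this to the desired identity $P^+(\lambda)\begin{pmatrix} 1 \\ m_+(\lambda) \end{pmatrix} = \rho \begin{pmatrix} 1 \\ m_+(\lambda) \end{pmatrix}$.

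The bound $\abs{\rho}<1$ and the one-dimensionality of the eigenspace then follow by routine checks: iterating the Floquet relation yields
\[
\int_{R^+}^\infty \abs{u}^2 V \der y \;=\; \Bigl(\sum_{n\geq 0} \abs{\rho}^{2n}\Bigr) \int_{R^+}^{R^+ + X^+} \abs{u}^2 V \der y,
\]
and since $u \not\equiv 0$ on one period (otherwise $u \equiv 0$ by ODE uniqueness, contradicting $u(0)=1$) the $L^2_V$-integrability of $u$ forces $\abs{\rho}<1$; the identity $\det P^+(\lambda)=1$ then pairs $\rho$ with a second eigenvalue $1/\rho$ of modulus $>1$ and rules out a Jordan block (which would force $\rho^2=1$). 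The main subtlety I anticipate is bookkeeping: identifying which propagation matrix carries data from $0$ to $x$ versus $x$ to $x+X^+$, and checking that the limit-point theorem legitimately applies to $u(\impvar + X^+)$ — which only solves the ODE on $[R^+,\infty)$ — by observing that the missing piece $[0,R^+]$ contributes only bounded mass and does not obstruct the $L^2_V([0,\infty))$-conclusion.
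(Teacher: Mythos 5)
Your proposal is correct, and for the main (eigenvalue) claim it takes a genuinely different route from the paper. The determinant part is the same in both (constancy of the Wronskian of the fundamental system, value $1$ at $x=0$). For the eigenstructure, the paper iterates the monodromy: with $v=(1,m_+(\lambda))^\top$ and $u=\Psi(\impvar;\lambda)\cdot v$ it computes $(u(x+nX^+),u'(x+nX^+))^\top=P(x,0)P^+(\lambda)^n v$, deduces $P^+(\lambda)^n v\to 0$ from the square-integrability of $u$ near $+\infty$, and then uses $\det P^+(\lambda)=1$ together with the $2\times2$ structure to conclude that $v$ spans the contracting eigendirection, with multiplier of modulus $<1$. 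You instead invoke \cref{thm:limit_point}: $u$ and its shift $u(\impvar+X^+)$ both solve the equation on $[R^+,\infty)$ and are square-integrable near $+\infty$, hence are proportional, which yields the Floquet relation $u(\impvar+X^+)=\rho\,u$ in one stroke; the eigenvector identity then follows by the conjugation formula, $\abs{\rho}<1$ from the geometric-series evaluation of $\norm{u}_{L^2_V(R^+,\infty)}^2$, and one-dimensionality of the eigenspace from $\det P^+(\lambda)=1$ (distinct eigenvalues $\rho$ and $1/\rho$). Your route trades the paper's asymptotic linear-algebra step (orbit decay forces $v$ into the stable eigenspace, which itself needs that $w\mapsto\norm{\Psi(\impvar;\lambda)\cdot w}_{L^2_V}$ over one period is a norm on $\C^2$) for an appeal to \cref{thm:limit_point}, which the paper's proof does not use; in exchange you obtain the multiplier very directly and a transparent proof of $\abs{\rho}<1$. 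Two small points to tidy up: \cref{thm:limit_point} is stated for solutions of \eqref{eq:evp} on all of $\R$, so extend $u(\impvar+X^+)\vert_{[R^+,\infty)}$ backwards to a global solution (the compact piece only adds finite $L^2_V$-mass, as you note) and use $u\not\equiv 0$ on $[R^+,\infty)$ so the proportionality reads $u(\impvar+X^+)=\rho u$ with $\rho\neq 0$; and the paper's proof also verifies that $P^+(\lambda)$ is independent of the choice of $x>R^+$ (well-definedness of the Definition), which your argument does not address — harmless for the stated claims, but worth a sentence if your proof is meant to replace the paper's.
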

\begin{proof}
	We only consider ``$+$''. First, $P^+$ is well-defined since for $y, x > R^+$ and omitting $\lambda$ we have
	\begin{align*}
		P(0, y) P(y + X^+, 0)
		&= P(0, x) P(x, y) P(y + X^+, x + X^+) P(x + X^+, 0)
		\\ &= P(0, x) P(x, y) P(y, x) P(x + X^+, 0)
		= P(0, x) P(x + X^+, 0)
	\end{align*} 
	where $P(y + X^+, x + X^+) = P(y, x)$ because the differential operator is $X^+$-periodic on $[R^+, \infty)$.

	Next $P$, and therefore also $P^+$, has determinant $1$ since
	\begin{align*}
		\dv{}{x} \det \begin{pmatrix}
			\Psi_1(x; \lambda) & \Psi_2(x; \lambda)
			\\ \Psi_1'(x; \lambda) & \Psi_2'(x; \lambda)
		\end{pmatrix}
		= \Psi_1(x; \lambda) \Psi_2''(x; \lambda) - \Psi_1''(x; \lambda) \Psi_2(x; \lambda) = 0
	\end{align*}
    and at $x=0$ the determinant is $1$ due to the definition of $\Psi_1, \Psi_2$.

	Lastly let $\lambda \in \C \setminus \R$. We write $v \coloneqq (1, m_+(\lambda))^\top$ and $u \coloneqq \Psi_1(\impvar; \lambda) + m_+(\lambda) \Psi_2(\impvar; \lambda) = \Psi(\impvar; \lambda) \cdot v$, which is square integrable near $+\infty$ by \cref{thm:existence_of_m}. By definition of $P^+(\lambda)$ we further have for $n \in \N$ and $x > R^+$
    \begin{align*}
        \begin{pmatrix}
            u(x+nX^+) \\ u'(x+nX^+) 
        \end{pmatrix} 
        & = P(x+nX^+,0) v = P(x,0)  P(0,x) P(x+nX^+,x) P(x,0) v  \\
        &= P(x,0) P(0,x) P(x+X^+,x)^n P(x,0)v 
        = P(x,0) P^+(\lambda)^n v
	\intertext{and therefore}
		u(x + n X^+) 
		&= \Psi(x; \lambda) \cdot P^+(\lambda)^n v.
	\end{align*}
	Since the function $u$ is $L^2$-localized, we have that $P^+(\lambda)^n v \to 0$ as $n \to \infty$. 
	As $P^+(\lambda)$ has determinant $1$ and is $2\times2$, we conclude that $v$ must be an eigenvector of $P^+(\lambda)$ to an eigenvalue $\rho$ with $\abs{\rho} < 1$.
\end{proof}

\begin{lemma}\label{lem:rho}
	The monodromy matrices $P^\pm(\lambda)$ are holomorphic on $\C$. As a consequence, the ``singular sets''
	\begin{align*}
		S_\pm \coloneqq \set{\lambda \in \C \colon P^\pm(\lambda) \text{ has eigenvalue } -1 \text{ or }1}
	\end{align*}
	are discrete subsets of $\R$. Moreover, there exist open neighborhoods $D_\pm$ of $\overline{\H}\setminus S_\pm$ and continuous functions $\rho_\pm \colon D_\pm \cup S_\pm \to \C$ that are holomorphic on $D_\pm$ with $\abs{\rho_\pm} < 1$ on $\H$ and such that $\rho_\pm(\lambda)$ is an eigenvalue of $P^\pm(\lambda)$ for $\lambda \in D_\pm \cup S_\pm$. If $I$ is a connected component of $\R \setminus S_\pm$, then one of the following alternatives holds:	
	\begin{enumerate}
		\item $\abs{\rho_\pm} < 1$ on $I$.
		\item $\abs{\rho_\pm} = 1$ on $I$ and $\rho_\pm'(\lambda) \overline{\rho_\pm(\lambda)} \in \ii (0, \infty)$. If $I$ is bounded, $\rho_\pm$ is a diffeomorphism from $I$ to one of the sets $\S \cap \H$ or $\S \cap (-\H)$.
	\end{enumerate}
\end{lemma}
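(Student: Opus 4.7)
Analytic ODE theory makes $\Psi_j(x;\lambda)$ entire in $\lambda$, so $P(y,x;\lambda)$ and the monodromy matrices $P^\pm(\lambda)$ are entire. Since $\det P^\pm \equiv 1$, the characteristic polynomial is $\rho^2 - \tr P^\pm(\lambda)\rho + 1$, so $P^\pm(\lambda)$ has eigenvalue $\pm 1$ exactly when $\tr P^\pm(\lambda) = \pm 2$. For $\lambda \in \C \setminus \R$, \cref{lem:monodromy} provides an eigenvalue of modulus strictly less than $1$, excluding $\pm 1$, so $S_\pm \subset \R$. Moreover, if $\tr P^\pm \equiv \pm 2$, every $\lambda \in \H$ would violate the strict inequality in \cref{lem:monodromy}. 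Thus $(\tr P^\pm)^2 - 4$ is a non-trivial entire function, and $S_\pm$ is a discrete subset of $\R$.

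\textbf{Construction of $\rho_\pm$ on a neighborhood $D_\pm$ and continuity across $S_\pm$.} On $\H$, define $\rho_\pm(\lambda)$ as the unique eigenvalue of $P^\pm(\lambda)$ with $|\rho_\pm| < 1$ supplied by \cref{lem:monodromy}. At each point of $\C \setminus S_\pm$ the two eigenvalues are simple, so Riesz projection onto the $\rho_\pm$-eigenspace is locally holomorphic in $\lambda$, and hence so is $\rho_\pm$. For every component $I$ of $\R \setminus S_\pm$ the Riesz extension of $\rho_\pm$ from $\H$ is holomorphic on a complex neighborhood $U_I$ of $I$; set $D_\pm \coloneqq \H \cup \bigcup_I U_I$. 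At $\lambda_0 \in S_\pm$ the spectrum of $P^\pm(\lambda_0)$ collapses to $\pm 1$, and continuous dependence of the spectrum on the matrix gives $\rho_\pm(\lambda) \to \pm 1$ as $\lambda \to \lambda_0$ in $D_\pm$.

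\textbf{Dichotomy on each component.} For $\lambda \in \R$, $P^\pm(\lambda)$ is a real $2 \times 2$ matrix of determinant $1$, so its eigenvalues are either real reciprocals or a complex conjugate pair on $\S$; the two types can only switch where the eigenvalues coincide, i.e., on $S_\pm$, so each component $I \subset \R \setminus S_\pm$ has a single type. In the first case, $\rho_\pm$ cannot equal $\pm 1$ inside $I$ (otherwise $I$ would meet $S_\pm$), and by continuity from $\H$ we get $|\rho_\pm| < 1$ on $I$. In the second case, $|\rho_\pm| = 1$ throughout $I$.

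\textbf{Monotonicity and the diffeomorphism claim (the main obstacle).} On an oscillatory component $I$, take a holomorphic eigenvector $v(\lambda)$ of $P^\pm(\lambda)$ for $\rho_\pm(\lambda)$ and set $u(x;\lambda) \coloneqq \Psi(x;\lambda) \cdot v(\lambda)$, which satisfies the Bloch condition $u(\pm X^\pm; \lambda) = \rho_\pm(\lambda) u(0; \lambda)$ and similarly for $u'$. Differentiating $-u'' = \lambda V u$ in $\lambda$, pairing with $\bar u$, integrating over $[0, \pm X^\pm]$, performing two integrations by parts, and using $|\rho_\pm|^2 = 1$ on $I$ produces the Wronskian identity
\begin{align*}
	-\rho_\pm'(\lambda)\, \overline{\rho_\pm(\lambda)} \cdot W(\bar u, u)(0) = \int_0^{\pm X^\pm} V|u|^2 \der x.
\end{align*}
Writing $\rho_\pm = \ee^{\ii\theta}$, we have $\rho_\pm' \overline{\rho_\pm} = \ii\theta'$ and $W(\bar u, u)(0) = 2\ii\,\Im(\overline{u(0)}u'(0))$, so the identity becomes $\theta'(\lambda) \cdot \Im(\overline{u(0)}u'(0)) = \tfrac{1}{2} \int_0^{\pm X^\pm} V|u|^2 \der x > 0$. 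Normalizing $v(\lambda) = (1, m_\pm(\lambda))^\top$ near $\H$ via \cref{thm:existence_of_m} identifies $\Im(\overline{u(0)}u'(0)) = \Im(m_\pm(\lambda))$, which is strictly positive on $\H$; continuity to $I$ gives $\Im m_\pm \geq 0$ there, and the displayed identity forbids the vanishing case. Hence $\theta' > 0$, i.e., $\rho_\pm'(\lambda)\overline{\rho_\pm(\lambda)} \in \ii(0,\infty)$. For bounded $I$, continuity sends the two endpoints of $I$ to elements of $\{-1, 1\} \subset S_\pm$, and since $\rho_\pm$ cannot cross between $\S \cap \H$ and $\S \cap (-\H)$ without passing through $\pm 1 \in S_\pm$, strict monotonicity forces $\rho_\pm \vert_I$ to be a diffeomorphism onto exactly one of $\S \cap \H$ or $\S \cap (-\H)$. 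The obstacle I expect to dominate the proof is the sign determination of $\Im m_\pm$ on $I$, which couples the Riesz-calculus construction of $\rho_\pm$ with the global $L^2_V$-information on the Weyl solution provided by \cref{thm:existence_of_m}.
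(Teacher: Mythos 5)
Your treatment of holomorphy, discreteness of $S_\pm$, the construction and holomorphic extension of $\rho_\pm$ (Riesz projections in place of the paper's implicit function theorem applied to $\det(P^\pm(\lambda)-\rho I)=0$), and the dichotomy on each component (real Floquet discriminant: hyperbolic versus elliptic type, constant along $I$ since the types can only switch on $S_\pm$) is correct; for the dichotomy this is a legitimately different, arguably more standard route than the paper's argument, which looks at the connected component of $D_+\setminus\overline{\H}$ below $I$ and uses the maximum principle respectively continuity. The real divergence is in part (b): the paper argues purely complex-analytically, writing $\rho_+(\lambda)=\rho_+(\lambda_0)+(\lambda-\lambda_0)^n f(\lambda)$ and expanding $\abs{\rho_+(\lambda_0+\eps h)}^2<1$ for $h\in\H$, which forces $n=1$ and $\rho_+'(\lambda_0)\overline{\rho_+(\lambda_0)}\in\ii(0,\infty)$ in one stroke, with no Wronskian or Weyl-function input. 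Your route via a Floquet--Wronskian identity plus the sign of $\Im m_\pm$ is in principle workable (it is essentially the paper's later \cref{lem:wronskian_identity} combined with computations as in \cref{prop:spectral_density}), but as written it contains a genuine error and two smaller slips.

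The error: the claimed Bloch condition $u(\pm X^\pm;\lambda)=\rho_\pm(\lambda)u(0;\lambda)$ is false in general. The eigenvector relation $P^\pm(\lambda)v=\rho_\pm(\lambda) v$ only yields $u(x\pm X^\pm)=\rho_\pm(\lambda)u(x)$ for $x\gtrless R^\pm$ (cf.\ \cref{rem:l2}), because $V$ is periodic only there; the monodromy matrix is conjugated back to the basepoint $0$, but the translation identity does not survive this conjugation in the perturbed region. Consequently your displayed identity, with $\int_0^{\pm X^\pm}$ and boundary terms at $0$ and $\pm X^\pm$, is wrong whenever $0$ lies outside the periodic region; the correct identity integrates over one period inside that region, e.g.\ $[R^+,R^++X^+]$, exactly as in \cref{lem:wronskian_identity} (the left-hand side may still be evaluated at $0$, since $W(u,\overline u)$ is constant in $x$ for real $\lambda$). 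Two further points. First, for the ``$-$'' sign \cref{thm:existence_of_m} gives $\Im m_-<0$ on $\H$ (the integral there is $\int_0^{-\infty}$, i.e.\ signed), and the corrected identity for ``$-$'' carries an extra minus sign; your assertion that both quantities are positive is therefore wrong in that case — the two sign errors cancel, so the conclusion $\rho_-'\overline{\rho_-}\in\ii(0,\infty)$ survives, but the bookkeeping needs fixing. Second, $m_\pm$ need not extend continuously to all of $I$ (it has poles where the first component of $v_\pm(\lambda)$ vanishes), so ``continuity to $I$ gives $\Im m_\pm\geq 0$'' is not justified as stated; the clean fix is to work with $\Im\bigl(\overline{u(0)}\,u'(0)\bigr)$ for a locally holomorphic, nonvanishing eigenvector branch, which equals $\abs{v_{\pm,1}}^2\,\Im m_\pm$ on $\H$ and is continuous up to $I$. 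With these repairs your argument closes, but note that the paper's expansion argument delivers both $\rho_\pm'\neq 0$ and the direction of rotation with far less machinery.
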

\begin{remark}
	Comparing $\rho_\pm$ of \cref{lem:rho} with $\rho$ of \cref{lem:monodromy} (which depends on $\pm$), we have $\rho_\pm = \rho$ on $\H$, whereas $\rho_\pm$ can be either $\rho$ or $\rho^{-1}$ on $-\H$, depending on the behaviour of $\rho_\pm$ on $\R$. 
\end{remark}
\begin{remark}
	As we will see in \cref{prop:spectral_density} below, intervals of type (a) and (b) correspond to spectral gaps and spectral bands of $L$, respectively.
\end{remark}

\begin{proof}[Proof of \cref{lem:rho}]
	We only present the ``$+$'' case. 
	By \cite[§13, Theorem~III]{walterODE} and arguments therein, $\Psi_j(x; \lambda)$ are holomorphic functions of $\lambda$ for fixed $x$, and in particular $P^+$ are holomorphic. From \cref{lem:monodromy} we know that $P^+(\lambda)$ does not have eigenvalues of modulus $1$ for $\lambda \in \C\setminus\R$. As a consequence of the identity theorem for holomorphic functions, $S_+$ must be discrete.
	
	For $\lambda \in \H$, let $\rho_+(\lambda)$ be the unique eigenvalue of $P^+(\lambda)$ with $\abs{\rho_+} < 1$. Then $\rho_+$ is continuous and can be continuously extended to $\rho_+ \colon \overline{\H} \to \C$. As $\rho_+(\lambda)$ is a simple eigenvalue of $P^+(\lambda)$ for all $\lambda \in \overline{\H}\setminus S_+$, the implicit function theorem applied to $\det(P^+(\lambda) - \rho I) = 0$ shows that $\rho_+$ can be extended to a holomorphic function in a neighborhood $D_+$ of $\overline{\H}\setminus S_+$.
	
	Denote by $C$ the connected component of $D_+\setminus \overline{\H}$ touching all of $I$. Then either $\abs{\rho_+} < 1$ on $C$ or $\abs{\rho_+} > 1$ on $C$. In the first case, the maximum principle shows that $\abs{\rho_+} < 1$ on $I$, whereas in the second case we have $\abs{\rho_+} = 1$ by continuity. 

	Let us assume $\abs{\rho_+} = 1$ on $I$, take $\lambda_0 \in I$, and write
	\begin{align*}
		\rho_+(\lambda) = \rho_+(\lambda_0) + (\lambda - \lambda_0)^n f(\lambda)
	\end{align*}
	where $f$ is holomorphic with $f(\lambda_0) \neq 0$. For all $h \in \H$ and $\eps > 0$ we have 
	\begin{align*}
		1 > \abs{\rho_+(\lambda_0 + \eps h)}^2 
		= 1 + 2 \eps^n \Re[h^n f(\lambda_0) \overline{\rho_+(\lambda_0)}] + \landauO(\eps^{n+1}).
	\end{align*}
	This shows $\Re[h^n f(\lambda_0) \rho_+(\lambda_0)] \leq 0$ for all $h \in \H$, which is only possible when $n = 1$ and $f(\lambda_0) \rho_+(\lambda_0) \in \ii [0, \infty)$, and therefore $f(\lambda_0) = \rho_+'(\lambda_0) \neq 0$.

	As $\rho_+'$ vanishes nowhere on $I$, $\rho_+$ is a diffeomorphism from $I$ to its image $\rho_+(I) \subseteq \SwR$. If $I = (\lambda_1, \lambda_2)$ is bounded, we have $\lambda_1, \lambda_2 \in S_+$ and therefore $\rho_+(\lambda_1), \rho_+(\lambda_2) \in \set{-1, 1}$. This is only possible when $\rho_+(I)$ is either $\S \cap \H$ or $\S \cap (-\H)$.
\end{proof}

As the eigenvalues $\rho_\pm$, and in particular the associated eigenfunctions, are central to the functional calculus, we introduce symbols for them and show a useful identity.

\begin{definition}
	For $\lambda \in D_\pm$, we denote by $v_\pm(\lambda)$ the nonzero eigenvector of $P^\pm(\lambda)$ to the eigenvalue $\rho_\pm(\lambda)$, and let $\phi_\pm(x; \lambda) \coloneqq v_\pm(\lambda) \cdot \Psi(x; \lambda)$ be the associated eigenfunction.
\end{definition}

\begin{remark} \label{rem:l2}
    Note that $\phi_\pm$ solves \eqref{eq:evp} and satisfies $\phi_\pm(x \pm X^\pm; \lambda) = \rho_\pm(\lambda) \phi_\pm(x; \lambda)$ for $x \gtrless R^\pm$, For $\lambda\in \H$ we have by $|\rho_\pm|<1$ that $\phi_\pm \in L^2(\pm [0,\infty))$. Moreover, $v_\pm$ can locally be chosen holomorphic. 
    In order to keep our notation short we omit the $\lambda$-dependency for $\phi_\pm$.
\end{remark}

\begin{lemma}\label{lem:wronskian_identity}
	Let $\lambda \in \R$ with $\rho_\pm(\lambda) \in \SwR$ and $\phi_\pm(x) = v_\pm(\lambda) \cdot \Psi(x; \lambda)$. Then we have
	\begin{align*}
		W(\phi_+, \overline{\phi_+}) 
		= \frac{\rho_+(\lambda)}{\rho_+'(\lambda)} \int_{R^+}^{R^++X^+} \abs{\phi_+}^2 \der[V]{x}
		\quad\text{or}\quad
		W(\phi_-, \overline{\phi_-}) 
		= - \frac{\rho_-(\lambda)}{\rho_-'(\lambda)} \int_{R^--X^-}^{R^-} \abs{\phi_-}^2 \der[V]{x}.
	\end{align*}
\end{lemma}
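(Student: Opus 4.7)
The plan is to compute $W(\partial_\lambda \phi_+, \overline{\phi_+})\Big|_{R^+}^{R^++X^+}$ in two different ways and equate the results. Fix real $\lambda$ with $\rho_+(\lambda)\in \SwR$; by \cref{lem:rho} we may choose a local holomorphic branch of $v_+$ near $\lambda$, so that $\phi_+$ (and hence $\partial_\lambda \phi_+$) is holomorphic in $\lambda$ there, and moreover $\rho_+'(\lambda) \neq 0$.

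First I would apply the Wronskian identity to $f = \partial_\lambda \phi_+$ and $g = \overline{\phi_+}$. Differentiating $L \phi_+ = \lambda \phi_+$ in $\lambda$ gives $L(\partial_\lambda \phi_+) = \phi_+ + \lambda\,\partial_\lambda \phi_+$, while for real $\lambda$ we have $L\overline{\phi_+} = \lambda\overline{\phi_+}$. Hence $Lf \cdot g - f \cdot Lg = |\phi_+|^2$, and so
\begin{equation*}
	W(\partial_\lambda \phi_+, \overline{\phi_+})\Big|_{R^+}^{R^+ + X^+} = \int_{R^+}^{R^+ + X^+} |\phi_+|^2 \der[V] x.
\end{equation*}

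Second I would exploit quasi-periodicity. From $\phi_+(x + X^+; \lambda) = \rho_+(\lambda) \phi_+(x; \lambda)$ for $x > R^+$ I differentiate in $x$ and in $\lambda$ to obtain the analogous identities for $\phi_+'$, $\partial_\lambda \phi_+$ and $(\partial_\lambda \phi_+)'$ at the shifted point, together with $\overline{\phi_+}(x+X^+;\lambda) = \overline{\rho_+(\lambda)}\,\overline{\phi_+}(x;\lambda)$. Plugging into the definition of $W$ and collecting terms yields
\begin{equation*}
	W(\partial_\lambda \phi_+, \overline{\phi_+})(R^+ + X^+) = |\rho_+|^2\, W(\partial_\lambda \phi_+, \overline{\phi_+})(R^+) + \overline{\rho_+(\lambda)}\, \rho_+'(\lambda)\, W(\phi_+, \overline{\phi_+}),
\end{equation*}
where $W(\phi_+, \overline{\phi_+})$ is independent of $x$ because $\phi_+$ and $\overline{\phi_+}$ both solve $Lu = \lambda u$. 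Since $|\rho_+(\lambda)| = 1$, the first term on the right drops out once we subtract the value at $R^+$, and combining with the first step gives $\overline{\rho_+}\,\rho_+'\, W(\phi_+, \overline{\phi_+}) = \int_{R^+}^{R^++X^+} |\phi_+|^2\der[V]x$. Using $\overline{\rho_+} = \rho_+^{-1}$ yields the $+$ identity.

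The $-$ case is entirely analogous: starting from $\phi_-(x - X^-; \lambda) = \rho_-(\lambda) \phi_-(x; \lambda)$ for $x < R^-$, the same calculation on the interval $[R^- - X^-, R^-]$ produces the identity, with the opposite sign arising because the quasi-periodicity shift is now $-X^-$, which reverses the orientation of the boundary evaluation relative to the $+$ case. I do not anticipate a serious obstacle; the only items requiring care are the existence of a local holomorphic branch of $v_\pm$ and the non-vanishing of $\rho_\pm'(\lambda)$, both of which are provided by \cref{lem:rho}.
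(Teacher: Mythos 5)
Your proposal is correct and follows essentially the same route as the paper: both compute the increment $W(\partial_\lambda \phi_+, \overline{\phi_+})\big\vert_{R^+}^{R^++X^+}$ once via the Lagrange/Wronskian identity applied to $L\partial_\lambda\phi_+ = \phi_+ + \lambda\partial_\lambda\phi_+$ and $L\overline{\phi_+} = \lambda\overline{\phi_+}$, and once via the quasi-periodicity $\phi_+(x+X^+;\lambda)=\rho_+(\lambda)\phi_+(x;\lambda)$ differentiated in $\lambda$, then use $\abs{\rho_+}=1$. Your treatment of the ``$-$'' case and of the sign, as well as the remarks on the local holomorphic choice of $v_\pm$ and $\rho_\pm'(\lambda)\neq 0$ from \cref{lem:rho}, are consistent with the paper, which only writes out the ``$+$'' case.
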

\begin{proof}
	We only consider the ``$+$'' case. Recall that
	\begin{align*}
		L \phi_+ = \lambda \phi_+, 
		\qquad
		\phi_+(x + X^+; \lambda) = \rho_+(\lambda) \phi_+(x;\lambda)
	\end{align*}
	so in particular $L \partial_\lambda \phi_+ = \phi_+ + \lambda \partial_\lambda \phi_+$ holds.
	Since $L \overline{\phi_+} = \lambda \overline{\phi_+}$ and $\abs{\rho_+(\lambda)} = 1$, it follows that
	\begin{align*}
		\int_{R^+}^{R^+ + X^+} \abs{\phi_+}^2 \der[V]{x}
		&= \int_{R^+}^{R^+ + X^+} \left( L \partial_\lambda \phi_+ - \lambda \partial_\lambda \phi_+ \right) \overline{\phi_+} \der[V] x
		\\ &= \int_{R^+}^{R^+ + X^+} \left(L \partial_\lambda \phi_+ \cdot \overline{\phi_+} - \partial_\lambda \phi_+ \cdot L \overline{\phi_+}\right) \der[V] x
		\\ &= \left. W(\partial_\lambda \phi_+, \overline{\phi_+}) \right|_{R^+}^{R^+ + X^+}
		\\ &= W(\partial_\lambda (\rho_+ \phi_+), \overline{\rho_+ \phi_+})(R^+)
		- W(\partial_\lambda \phi_+, \overline{\phi_+})(R^+)
		\\ &= \partial_\lambda \rho_+ \overline{\rho_+} W(\phi_+, \overline{\phi_+})(R^+)
		+ (\abs{\rho_+^2} - 1) W(\partial_\lambda \phi_+, \overline{\phi_+})(R^+)
		\\ &= \partial_\lambda \rho_+ \overline{\rho_+} W(\phi_+, \overline{\phi_+}).
		\qedhere
	\end{align*}
\end{proof}

We introduce two coefficients and a singular set to describe the interaction between $\phi_+$ and $\phi_-$.

\begin{definition}
	Let $\lambda \in D_\pm$ with $\rho_\pm(\lambda) \in \SwR$. Then, as $\phi_\pm$ is not real-valued, the functions $\phi_\pm, \overline{\phi_\pm}$ span the space of solutions to \eqref{eq:evp}. Therefore, there exist constants $r, t$ such that
	\begin{align*}
		\phi_\mp = r \phi_\pm + t \overline{\phi_\pm}.
	\end{align*}
	We call
	the number $r = r(\phi_\mp; \phi_\pm)$ the \emph{reflection coefficient} and $t = t(\phi_\mp; \phi_\pm)$ the \emph{transmission coefficient}. Also we define $S_0$ as the singular set
	\begin{align*}
		S_0 \coloneqq \set{\lambda \in D_+ \cap D_- \colon \lspan\{v_+(\lambda) \} = \lspan\{v_-(\lambda) \}}
	\end{align*}
	on which $\phi_+$, $\phi_-$ are linearly dependent, and denote the ``large singular set'' by $S \coloneqq S_+ \cup S_- \cup S_0$.
\end{definition}

\begin{remark}\label{rem:S0}
	Let $\lambda \in \H$. Then, as $\lambda$ is not an eigenvalue of $L$, the $L^2(\pm [0, \infty))$-functions $\phi_\pm$ (cf. \cref{rem:l2}) must be linearly independent. 
	That is, $\H \cap S_0 = \emptyset$ holds. By the identity theorem, $S_0$ has no accumulation points in $D_+ \cap D_-$.
\end{remark}

Recalling \cref{thm:functional_calculus:general}, we now calculate the density $\lim_{\eps \to 0} M(\lambda + \ii \eps)$ of the spectral measure $\mu$ for $\lambda$ away from the singularities $S$. With this we describe the essential spectrum; the point spectrum is considered separately afterwards. 

\begin{proposition}\label{prop:spectral_density}
	The limit 
	\begin{align*}
		\lim_{\eps \to 0+} M(\lambda + \ii \eps)
	\end{align*}
	converges uniformly for $\lambda$ in compact subsets of $\R \setminus S$. If we denote the limiting matrix by $M(\lambda)$ then 
	\begin{align*}
		v_i \overline{v_j} M_{ij}(\lambda)
		&= \frac{1}{2 \pi} \left(\frac{\bbone_\set{\abs{\rho_+(\lambda)} = 1} \abs{v \cdot v_-(\lambda)}^2 \abs{\rho_+'(\lambda)}}{\abs{t(\phi_-; \phi_+)}^2 \norm{\phi_+}_{L^2_V(R^+, R^+ + X^+)}^2}
		+ \frac{\bbone_\set{\abs{\rho_-(\lambda)} = 1} \abs{v \cdot v_+(\lambda)}^2 \abs{\rho_-'(\lambda)}}{\abs{t(\phi_+; \phi_-)}^2 \norm{\phi_-}_{L^2_V(R^--X^-,R^-)}^2}\right).
	\end{align*}
	holds for $\lambda \in \R \setminus S$ and $v \in \C^2$.
\end{proposition}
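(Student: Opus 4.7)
The approach is to evaluate $M(\lambda) = \lim_{\eps \to 0+} M(\lambda + \ii\eps)$ from its definition in \cref{thm:functional_calculus:general} by first identifying the boundary values $m_\pm(\lambda) = \lim_{\eps \to 0+} m_\pm(\lambda + \ii\eps)$ on $\R \setminus S_\pm$, and then reducing the resulting matrix expression algebraically. Since $\phi_\pm(\impvar;\lambda) = \Psi_1(\impvar;\lambda) + m_\pm(\lambda)\Psi_2(\impvar;\lambda)$ on $\H$ by \cref{thm:existence_of_m}, I normalize $v_\pm = (1, m_\pm)^\top$ (scaling invariance of the final formula justifies this choice at the end). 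By \cref{lem:rho}, $v_\pm$ extends holomorphically to a neighborhood of $\overline\H \setminus S_\pm$, so $m_\pm$ has continuous boundary values on $\R \setminus S_\pm$; on type~(a) components they are real (so $\Im m_\pm = 0$), whereas on type~(b) components \cref{thm:existence_of_m} forces $\Im m_+ > 0$ and $\Im m_- < 0$. Uniform convergence on compact subsets of $\R \setminus S$ is inherited from this local holomorphic structure.

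The next step is to translate $\Im m_\pm$ and $D \coloneqq m_- - m_+$ into the geometric data $\norm{\phi_\pm}_{L^2_V}$ and $t(\phi_\mp;\phi_\pm)$. Computing $W(\phi_+, \overline{\phi_+}) = (\overline{m_+} - m_+)W(\Psi_1, \Psi_2) = -2\ii\,\Im m_+$ and combining with \cref{lem:wronskian_identity} and the identity $\rho_\pm/\rho_\pm' = -\ii/\abs{\rho_\pm'}$ (a rewriting of $\rho_\pm' \overline{\rho_\pm} \in \ii(0,\infty)$ from \cref{lem:rho} together with $\abs{\rho_\pm} = 1$) yields
\begin{align*}
	\Im m_+(\lambda) = \frac{\norm{\phi_+}_{L^2_V(R^+, R^+ + X^+)}^2}{2\abs{\rho_+'(\lambda)}},
	\qquad
	\Im m_-(\lambda) = -\frac{\norm{\phi_-}_{L^2_V(R^- - X^-, R^-)}^2}{2\abs{\rho_-'(\lambda)}}
\end{align*}
on type~(b) components. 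Furthermore, $D = W(\phi_+, \phi_-)$ can be read off directly from $\phi_\pm = \Psi_1 + m_\pm \Psi_2$, while the decomposition $\phi_- = r\phi_+ + t(\phi_-;\phi_+)\overline{\phi_+}$ combined with $W(\phi_+,\phi_+)=0$ gives $W(\phi_+,\phi_-) = t(\phi_-;\phi_+)\, W(\phi_+, \overline{\phi_+}) = -2\ii\, t(\phi_-;\phi_+)\,\Im m_+$; hence $D = -2\ii\, t(\phi_-;\phi_+)\,\Im m_+$ whenever $\abs{\rho_+}=1$, and symmetrically $D = 2\ii\, t(\phi_+;\phi_-)\, \Im m_-$ whenever $\abs{\rho_-} = 1$.

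The main algebraic step rests on the rank-two factorization
\begin{align*}
	\begin{pmatrix} 1 & \tfrac12(m_- + m_+) \\ \tfrac12(m_- + m_+) & m_- m_+ \end{pmatrix}
	= \tfrac12 (v_+ v_-^\top + v_- v_+^\top),
\end{align*}
which gives
\begin{align*}
	v^* \tilde M v = \frac{1}{2D}\bigl[(\overline v \cdot v_+)(v \cdot v_-) + (\overline v \cdot v_-)(v \cdot v_+)\bigr].
\end{align*}
Writing $m_\pm = a_\pm + \ii b_\pm$ and expanding $\Im[v^*\tilde M v]$ in the real basis $\abs{v_1}^2, \Re(v_1 \overline{v_2}), \Im(v_1 \overline{v_2}), \abs{v_2}^2$, careful collection of cross-terms (those with coefficients $a_\pm b_\mp$ and $a_\pm^2 b_\mp$) produces the identity
\begin{align*}
	\pi \abs{D}^2\, v^* M v
	= b_+ \bbone_{\abs{\rho_+}=1} \abs{v \cdot v_-}^2 - b_- \bbone_{\abs{\rho_-}=1} \abs{v \cdot v_+}^2,
\end{align*}
where the indicators arise automatically since $b_\pm = 0$ on type~(a). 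Inserting the expressions for $\Im m_\pm$ together with $\abs{D}^2 = 4\abs{t(\phi_\mp;\phi_\pm)}^2 (\Im m_\pm)^2$ where applicable then yields the stated formula. The principal obstacle is precisely this algebraic collapse: one must verify that the cross-terms recombine exactly so as to complete the two squares $\abs{v \cdot v_\mp}^2$, with no residual terms. A more conceptual alternative is to decompose $v = \alpha v_+ + \beta v_-$ with $\alpha = (v \wedge v_-)/D$, $\beta = -(v \wedge v_+)/D$ (where $\wedge$ is the symplectic pairing), under which $v^* \tilde M v$ separates naturally into contributions aligned with each eigenvector direction, making the two-summand structure transparent.
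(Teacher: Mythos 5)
Your rank-two factorization $\tilde M = \tfrac{1}{2D}(v_+ v_-^\top + v_- v_+^\top)$, where $\tilde M = (m_--m_+)^{-1}\begin{pmatrix}1 & \tfrac12(m_-+m_+)\\\tfrac12(m_-+m_+)&m_-m_+\end{pmatrix}$, is a genuinely different and more streamlined algebraic reduction than the one in the paper. The paper instead introduces the dual vectors $w_\pm \coloneqq c_\mp^{-1}(m_\pm,-1)^\top$, verifies that $M$ is diagonal in the basis $(w_+,w_-)$ (with entries $\pm\Im m_\pm/(\pi|c_\mp|^2)$), and then expands an arbitrary $v$ as $v = (m_+-m_-)^{-1}\bigl((v\cdot v_-)w_+-(v\cdot v_+)w_-\bigr)$. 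Both routes land on the same intermediate formula. I checked your algebraic identity $\pi|D|^2\,v^*Mv = b_+|v\cdot v_-|^2 - b_-|v\cdot v_+|^2$ (with $b_\pm = \Im m_\pm$): writing $P = v\cdot v_+$, $Q = v\cdot v_-$, and using $v_2 = (Q-P)/D$, the cross-terms containing $\Re[\bar P Q]$ indeed cancel exactly, so the collapse you describe does occur. Your extraction of $\Im m_\pm$ from $W(\phi_\pm,\overline{\phi_\pm})$ together with \cref{lem:wronskian_identity} and $\rho_\pm/\rho_\pm' = -\ii/|\rho_\pm'|$ is correct, and the paper uses the same lemma for the analogous purpose. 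A substantive difference of strategy is the order of operations: you take boundary values of $m_\pm$ first and then reduce algebraically with real-boundary data, whereas the paper performs all reductions on $\H$ (where $m_\pm$ are holomorphic) and only at the end lets $\Im\lambda\downarrow 0$, passing through the quantity $\Im[\lambda]\int_0^{\pm\infty}|\phi_\pm|^2 V\,\der x$ rather than $\Im m_\pm$.

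There is a genuine gap in the step ``By \cref{lem:rho}, $v_\pm$ extends holomorphically to a neighborhood of $\overline\H\setminus S_\pm$, so $m_\pm$ has continuous boundary values on $\R\setminus S_\pm$.'' The implication does not follow: $v_\pm$ is defined only up to scalar, and the normalization $v_\pm=(1,m_\pm)^\top$ that you adopt breaks down precisely where $(v_\pm)_1=0$, i.e.\ where $\phi_\pm(0;\lambda)=0$. At such points $m_\pm$ has a pole. These poles \emph{can} lie in $\R\setminus S_\pm$ (they are the Dirichlet eigenvalues of the half-line problem), so your continuity claim is false as stated, and the derivation of $\Im m_\pm$ via $W(\phi_\pm,\overline{\phi_\pm})=-2\ii\Im m_\pm$ (which tacitly assumes $\phi_\pm = \Psi_1 + m_\pm\Psi_2$ with $m_\pm$ finite) fails there. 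You hint at the resolution (``scaling invariance of the final formula justifies this choice at the end'') but do not carry it out. To close the gap you need two observations: (i) on type-(b) components of $\R\setminus S_\pm$ (where $\rho_\pm\in\SwR$), $(v_\pm)_1\neq 0$ automatically, because a real $2\times2$ matrix with non-real eigenvalue $\rho_\pm$ cannot have $(0,1)^\top$ as eigenvector; so $m_\pm$ is actually finite exactly where you need $\Im m_\pm\neq 0$. (ii) Poles can occur only in type-(a) components, where $\bbone_{\{|\rho_\pm|=1\}}=0$, so the corresponding term in the formula vanishes identically; there you should argue by scale invariance of the final expression and continuity (or, as the paper does, carry $c_\pm$ explicitly so $|c_\pm|^2\Im m_\pm$ stays bounded). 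The same issue affects the uniform-convergence claim: near a pole of $m_\pm$ the ``local holomorphic structure'' of $m_\pm$ is not what gives uniformity; one should instead invoke the holomorphy/boundedness of the scale-invariant data $M(\lambda)$ itself.
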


\begin{proof}
	\textit{Part 1:}
	First, let $\lambda \in \H$. By \cref{lem:monodromy} we can write $v_\pm(\lambda) = c_\pm(\lambda) (1, m_\pm(\lambda))^\top$ for some $c_\pm(\lambda) \in \C \setminus\set0$, and we set $w_\pm(\lambda) \coloneqq c_\mp(\lambda)^{-1}(m_\pm(\lambda), -1)^\top$. 
	Using the definition of $M(\lambda)$ in \cref{thm:functional_calculus:general} we calculate
	\begin{align*}
		w_{+,i} \overline{w_{+,j}} M_{ij}(\lambda) &= \frac{\Im[m_+]}{\pi \abs{c_-(\lambda)}^2},
		&w_{+,i} \overline{w_{-,j}} M_{ij}(\lambda) &= 0,
		\\ w_{-,i} \overline{w_{+,j}} M_{ij}(\lambda) &= 0,
		&w_{-,i} \overline{w_{-,j}} M_{ij}(\lambda) &= - \frac{\Im[m_-]}{\pi \abs{c_+(\lambda)}^2}
	\end{align*} 
	Note that the identity
	\begin{align*}
		v = \frac{1}{m_+ - m_-}\left( (v \cdot v_-) w_+ - (v \cdot v_+) w_- \right)
	\end{align*}
	holds for any $v \in \C^2$. Therefore, for $\lambda \in \H$ we obtain
	\begin{align}\label{eq:loc:matrix_formula:1}
		v_i \overline{v_j} M_{ij}(\lambda)
		&= \frac{1}{\pi \abs{m_+ - m_-}^2} \left( \abs{v \cdot v_-(\lambda)}^2 \frac{\Im[m_+]}{\abs{c_-(\lambda)}^2} - \abs{v \cdot v_+(\lambda)}^2 \frac{\Im[m_-]}{\abs{c_+(\lambda)}^2} \right)
	\end{align}
	Before taking the limit, we express \eqref{eq:loc:matrix_formula:1} in simpler terms. For this, we calculate
	\begin{align*}
		\int_0^\infty \abs{\phi_+}^2 \der[V]{x}
		&= \frac{1}{\lambda - \overline{\lambda}} \int_0^\infty \left(L \phi_+ \cdot \overline{\phi_+} - \phi_+ \cdot L \overline{\phi_+} \right) \der[V]{x}
		= \frac{1}{\lambda - \overline{\lambda}} \left. W(\phi_+, \overline{\phi_+}) \right|_0^\infty
		\\ &= \frac{1}{\lambda - \overline{\lambda}} \left( \phi_+'(0) \overline{\phi_+(0)} - \phi_+(0) \overline{\phi_+'(0)} \right) = \abs{c_+(\lambda)}^2 \frac{\Im[m_+]}{\Im[\lambda]},
	\end{align*}
	and in the same way also obtain
	\begin{align*}
		\int_{-\infty}^0 \abs{\phi_-}^2 \der[V]{x} = - \abs{c_-(\lambda)}^2 \frac{\Im[m_-]}{\Im[\lambda]}.
	\end{align*}
	Using this, 
	\begin{align*}
		W(\phi_+, \phi_-) = \phi_+(0; \lambda) \phi_-'(0; \lambda) - \phi_+'(0; \lambda) \phi_-(0; \lambda) = c_+(\lambda) c_-(\lambda) \left(m_-(\lambda) - m_+(\lambda)\right)
	\end{align*}
	as well as \cref{lem:wronskian_identity}, we can express \eqref{eq:loc:matrix_formula:1} as 
	\begin{align}\label{eq:loc:matrix_formula:2}
		v_i \overline{v_j} M_{ij}(\lambda)
		&= \frac{\Im[\lambda]}{\pi \abs{W(\phi_+, \phi_-)}^2} \left( \abs{v \cdot v_-(\lambda)}^2 \int_0^\infty \abs{\phi_+}^2 V \der x + \abs{v \cdot v_+(\lambda)}^2 \int_{-\infty}^0 \abs{\phi_-}^2 V \der x \right)
	\end{align}

	\textit{Part 2:}
	For the limit, let $\lambda_0 \in \R \setminus S$ and $\lambda \in \H$.
	Since $\phi_+(x + X^+) = \rho_+(x) \phi_+(x)$ we have
	\begin{align*}
		\int_0^\infty \abs{\phi_+(x; \lambda)}^2 \der[V] x
		= \int_0^{R^+} \abs{\phi_+(x; \lambda)}^2 \der[V] x
		+ \frac{1}{1 - \abs{\rho_+(\lambda)}^2} \int_{R^+}^{R^+ + X^+} \abs{\phi_+(x; \lambda)}^2 \der[V] x
	\end{align*}
	If $\abs{\rho_+(\lambda_0)} = 1$, then by \cref{lem:rho} we have $\rho_+'(\lambda_0) \overline{\rho_+(\lambda_0)} = \ii \abs{\rho_+'(\lambda_0)}$ and thus
	\begin{align*}
		1 - \abs{\rho_+(\lambda)}^2
		= -2 \Re[(\lambda - \lambda_0) \rho_+'(\lambda_0) \overline{\rho_+(\lambda_0)}] + \landauO(\abs{\lambda - \lambda_0}^2)
		= 2 \abs{\rho_+'(\lambda_0)} \Im[\lambda] + \landauO(\abs{\lambda - \lambda_0}^2) 
	\end{align*}
	as $\lambda \to \lambda_0$. It follows that
	\begin{align*}
		\Im[\lambda] \int_0^\infty \abs{\phi_+(x; \lambda)}^2 \der[V] x
		\to \begin{cases}
			\displaystyle \frac{1}{2 \abs{\rho_+'(\lambda_0)}} \int_{R^+}^{R^+ + X^+} \abs{\phi_+(x; \lambda_0)}^2 \der[V] x, & \abs{\rho_+(\lambda_0)} = 1, 
			\\ 0, & \abs{\rho_+(\lambda_0)} < 1
		\end{cases}
	\end{align*}
	as $\lambda \to \lambda_0$ along curves with $\abs{\lambda - \lambda_0}^2 = \landauo(\Im[\lambda])$. As $\lambda_0 \not \in S_0$, we have $W(\phi_+(\impvar; \lambda_0), \phi_-(\impvar; \lambda_0)) \neq 0$. So the remaining terms in the right-hand side of \eqref{eq:loc:matrix_formula:2} are continuous at $\lambda_0$. Thus we so far have shown
	\begin{align}\label{eq:loc:matrix_formula:3}
		\begin{split}
			\lim_{\eps \to 0+} v_i \overline v_j M(\lambda + \ii \eps)
			&= \frac{\bbone_\set{\abs{\rho_+(\lambda)} = 1} \abs{v \cdot v_-(\lambda)}^2 \norm{\phi_+}_{L^2_V(R^+, R^++X^+)}^2}{2 \pi \abs{W(\phi_+, \phi_-)}^2 \abs{\rho_+'(\lambda)}}
			\\ &\quad+ \frac{\bbone_\set{\abs{\rho_-(\lambda)} = 1} \abs{v \cdot v_+(\lambda)}^2 \norm{\phi_-}_{L^2_V(R^--X^-, R^-)}^2}{2 \pi \abs{W(\phi_+, \phi_-)}^2 \abs{\rho_-'(\lambda)}}
		\end{split}
	\end{align}
	for $\lambda \in \R \setminus S$, and one easily sees that this convergence is 
	locally uniform.
	Using \cref{lem:wronskian_identity} we have
	\begin{align*}
		W(\phi_+, \phi_-)
		&= W(\phi_+, r(\phi_-; \phi_+) \phi_+ + t(\phi_-; \phi_+) \overline{\phi_+})
		\\ & = t(\phi_-; \phi_+) W(\phi_+, \overline{\phi_+})
		= t(\phi_-; \phi_+) \frac{\rho_+(\lambda)}{\rho_+'(\lambda)} \norm{\phi_+}_{L^2_V(R^+, R^++X^+)}^2
	\intertext{as well as its analogue}
		W(\phi_+, \phi_-) & = -W(\phi_-,\phi_+)
		= t(\phi_+; \phi_-) \frac{\rho_-(\lambda)}{\rho_-'(\lambda)} \norm{\phi_-}_{L^2_V(R^--X^-, R^-)}^2.
	\end{align*}
	The claim follows from this and \eqref{eq:loc:matrix_formula:3}.
\end{proof}

We complete the description of the spectral measure by considering the point spectrum.

\begin{lemma}\label{lem:functional_calc:point_spectrum}
	Let $\lambda \in \R$. Then $\mu$ is discontinuous at $\lambda$ if and only if $\lambda \in \sigma_p(L)$. In this case, letting $\phi(x; \lambda) = v\cdot \Psi(x; \lambda)$ be an eigenfunction, we have
	\begin{align*}
		\mu(\lambda+) - \mu(\lambda-)
		= \frac{(v_i \overline{v_j})_{i,j=1}^2}{\norm{\phi}_{L^2_V(\R)}^2} .
	\end{align*}
	Note that $v \in \C v_+(\lambda) = \C v_-(\lambda)$ holds for $\lambda \in \sigma_p(L)$, and in particular $\sigma_p(L) \subseteq S_0$.
\end{lemma}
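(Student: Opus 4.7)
The plan is to exploit the unitary equivalence $T \colon L^2_V(\R;\C) \to L^2(\mu)$ from \cref{thm:functional_calculus:general}, which by \cref{lem:calculus_diagonalizes_L0} conjugates $L$ to multiplication by $\Id_\R$. Under this picture, $\lambda_0 \in \sigma_p(L)$ if and only if there exists a nonzero element of $L^2(\mu)$ supported at $\set{\lambda_0}$, which happens precisely when the jump $J \coloneqq \mu(\lambda_0+) - \mu(\lambda_0-)$ is nonzero. Concretely, if $\phi$ is an eigenfunction then $T[\phi]$ satisfies $(\Id_\R - \lambda_0) T[\phi] = 0$ in $L^2(\mu)$ and is therefore concentrated at $\lambda_0$; conversely, for any $c \in \C^2$ with $Jc \neq 0$ the inverse formula of \cref{thm:functional_calculus:general} applied to $c\,\bbone_{\set{\lambda_0}}$ produces a nontrivial eigenfunction.

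To identify the eigenvector $v$ with $v_\pm(\lambda_0)$, I would combine the limit-point property (\cref{thm:limit_point}) with the Floquet analysis of \cref{lem:monodromy,lem:rho}. An $L^2_V(\R)$ eigenfunction $\phi$ is in particular square integrable on $[0,\infty)$, and \cref{thm:limit_point} admits at most one such solution up to scalars. For $\lambda_0 \in D_+$ with $\abs{\rho_+(\lambda_0)} < 1$ the Floquet solution $\phi_+$ is precisely this distinguished solution, so by uniqueness for the ODE $\phi$ is proportional to $\phi_+$ globally on $\R$, giving $v \in \C v_+(\lambda_0)$. The case $\lambda_0 \in S_+$ is excluded since monodromy eigenvalues $\pm 1$ yield only bounded or polynomially growing Floquet solutions on $[R^+,\infty)$, none lying in $L^2_V$. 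The analogous argument at $-\infty$ gives $v \in \C v_-(\lambda_0)$, so $\lspan\set{v_+(\lambda_0)} = \lspan\set{v_-(\lambda_0)} = \lspan\set{v}$ and hence $\lambda_0 \in S_0$.

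For the jump formula itself, I would use the isometry of $T$. Since $\Psi_1, \Psi_2$ are real-valued for real $\lambda$, the matrix measure $\mu$ is real and symmetric, so $J$ is real, symmetric, and positive semi-definite. Simplicity of eigenvalues---another consequence of limit-point, since two linearly independent $L^2_V$ eigenfunctions would span the entire solution space and force \emph{every} solution to be $L^2_V$ near $+\infty$, contradicting \cref{thm:limit_point}---forces $J$ to have rank one, so $J = \beta w w^\top$ for some $w \in \R^2$ and $\beta > 0$. Applying $T^{-1}$ to $c\,\bbone_{\set{\lambda_0}}$ yields $\beta (w \cdot c)\, w \cdot \Psi(\impvar;\lambda_0)$, identifying $w \cdot \Psi$ as a real representative of $\phi$. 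Enforcing $\norm{T^{-1}[c\,\bbone_{\set{\lambda_0}}]}_{L^2_V}^2 = c^\top J c$ then gives $\beta = \norm{w \cdot \Psi}_{L^2_V}^{-2}$, and for a general complex representative $v = \alpha w$ the identities $v_i \overline{v_j} = \abs{\alpha}^2 w_i w_j$ and $\norm{v \cdot \Psi}_{L^2_V}^2 = \abs{\alpha}^2 \norm{w \cdot \Psi}_{L^2_V}^2$ upgrade this to the stated $J_{ij} = v_i \overline{v_j}/\norm{\phi}_{L^2_V}^2$.

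I expect the main technical hurdle to be the limit-point classification step: ruling out the edge cases $\lambda_0 \in S_\pm$ and showing that an $L^2_V(\R)$ eigenfunction, a priori defined globally, must coincide with a single Floquet solution on all of $\R$ rather than merely near $\pm\infty$. This is handled by the elementary fact that two solutions of the same linear ODE agreeing on any subinterval must agree everywhere they are defined, combined with \cref{thm:limit_point}.
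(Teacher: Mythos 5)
Your proposal is correct and follows essentially the same route as the paper's proof: the equivalence and the jump formula are both obtained from the transform $T$ of \cref{thm:functional_calculus:general} together with \cref{lem:calculus_diagonalizes_L0}, the rank-one structure of the jump from simplicity of eigenvalues (limit point, \cref{thm:limit_point}), and the isometry of $T$ to fix the normalization; the realness you invoke is in fact already contained in the statement of \cref{thm:functional_calculus:general} (where $\mu$ is $\R^{2\times 2}$-valued), while the paper instead notes that an eigenfunction of a simple real eigenvalue is real up to a scalar, so $\overline{v_i} v_j = v_i \overline{v_j}$. The only caveat concerns your justification of the final note (which the paper states without explicit proof): your case analysis treats $\abs{\rho_\pm(\lambda_0)} < 1$ and $\lambda_0 \in S_\pm$, but omits real $\lambda_0$ in the interior of a spectral band, where $\abs{\rho_\pm(\lambda_0)} = 1$ with $\rho_\pm(\lambda_0) \neq \pm 1$; there every solution on $[R^+,\infty)$ (resp. $(-\infty,R^-]$) is a bounded, non-decaying Bloch combination and hence not square integrable, so such $\lambda_0$ cannot be an eigenvalue — the same observation you already use for $S_\pm$ — and with this extra case your identification $v \in \C v_+(\lambda) = \C v_-(\lambda)$ and $\sigma_p(L) \subseteq S_0$ is complete.
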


\begin{proof}
	We write $\Delta \mu(\lambda) = \mu(\lambda+) - \mu(\lambda-)$.

	\textit{Part 1:}
	Let $\mu$ be discontinuous at $\lambda_0$. For $w \in \C^2$ we have
	\begin{align*}
		f = T^{-1}[\bbone_\set{\lambda_0} w]
		= w_i \Psi_j(\impvar; \lambda_0) \Delta \mu_{ij}(\lambda_0) \in L^2_V(\R).
	\end{align*}
	\cref{lem:calculus_diagonalizes_L0} yields $f \in H^2(\R)$ with $L f = \lambda_0 f$. Since there is at most $1$ linearly independent eigenfunction $\phi$, $\Delta \mu(\lambda_0)$ has rank $1$. As $\Delta \mu(\lambda_0)$ is positive semidefinite and $w_i \Delta \mu_{ij}(\lambda_0) e_j \in \C v$ there exists $r > 0$ such that $\Delta \mu_{ij}(\lambda_0) = r \overline{v_i} v_j$ holds. It remains to check the value of $r$:
	\begin{align*}
		r \abs{v}^4 = v_i \overline{v_j} \Delta\mu_{ij}(\lambda_0)
		= \norm{\bbone_\set{\lambda_0} v}_{L^2(\mu)}^2
		= \norm{T^{-1}[\bbone_\set{\lambda_0} v]}_{L^2_V}^2
		= \norm{r \abs{v}^2 \phi}_{L^2_V}^2
		= r^2 \abs{v}^4 \norm{\phi}_{L^2_V}^2.
	\end{align*}
	Note that $\phi$ is a multiple of a real-valued function since it is an eigenfunction to a simple real eigenvalue.
	Hence $v$ is a multiple of a real vector and in particular $\overline{v_i} v_j = v_i \overline{v_j}$ holds.
	
	\textit{Part 2:} Now let $\lambda_0 \in \sigma_p(L)$ with eigenfunction $\phi$. Then $\lambda_0 T[\phi] (\lambda) = T[L \phi] (\lambda) = \lambda T[\phi] (\lambda)$ for $\lambda \in \R$ by \cref{lem:calculus_diagonalizes_L0} and therefore $T[\phi] = \bbone_\set{\lambda_0} w$ for some $w \in \C^2$. As
	\begin{align*}
		0 < \norm{\phi}_{L^2_V}^2 = \norm{T[\phi]}_{L^2(\mu)}^2 = w_i \overline{w_j} \Delta\rho_{ij}(\lambda_0),
	\end{align*}
	$\mu$ must be discontinuous at $\lambda_0$.
\end{proof}

\begin{notation}
	In the following, for $\lambda \in \sigma_p(L)$ we denote by $\phi_0$ and $v_0$ the eigenfunction and eigenvector from \cref{lem:functional_calc:point_spectrum}.
\end{notation}

Let us conclude this subsection by explicitly writing down the $L^2(\mu)$-norm.

\begin{theorem}\label{thm:spectral_measure:explicit}
	Let $f \colon \R \to \C^2$ be measurable. Then
	\begin{align*}
		\norm{f}_{L^2(\mu)}^2
		&= \frac{1}{2 \pi} \int_{\R\setminus S} \frac{\bbone_\set{\abs{\rho_+(\lambda)} = 1} \abs{f(\lambda) \cdot v_-(\lambda)}^2 \abs{\rho_+'(\lambda)}}{\abs{t(\phi_-; \phi_+)}^2 \norm{\phi_+}_{L^2_V(R^+, R^+ + X^+)}^2} 
		+ \frac{\bbone_\set{\abs{\rho_-(\lambda)} = 1} \abs{f(\lambda) \cdot v_+(\lambda)}^2 \abs{\rho_-'(\lambda)}}{\abs{t(\phi_+; \phi_-)}^2 \norm{\phi_-}_{L^2_V(R^--X^-, R^-)}^2} \der \lambda
		\\ &+ \sum_{\lambda \in \sigma_p(L)} \frac{\abs{f(\lambda) \cdot v_0(\lambda)}^2}{\norm{\phi_0}_{L^2_V(\R)}^2}.
	\end{align*}
\end{theorem}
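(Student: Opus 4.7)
The plan is to determine the Lebesgue decomposition $\mu = \mu_{\mathrm{ac}} + \mu_{\mathrm{sc}} + \mu_{\mathrm{pp}}$ explicitly by showing that $\mu_{\mathrm{ac}}$ has density $M(\lambda)$ from \cref{prop:spectral_density}, that $\mu_{\mathrm{pp}}$ is supported on $\sigma_p(L)$ with atoms given by \cref{lem:functional_calc:point_spectrum}, and that $\mu_{\mathrm{sc}}$ vanishes. The formula for $\norm{f}_{L^2(\mu)}^2$ then follows by inserting this decomposition into the identity $\norm{f}_{L^2(\mu)}^2 = \int_\R f_i(\lambda) \overline{f_j(\lambda)}\,\der \mu_{ij}(\lambda)$ from \cref{sec:vector_L2}.

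First I would verify that the singular set $S = S_+ \cup S_- \cup S_0$ is countable. The sets $S_\pm$ are discrete in $\R$ by \cref{lem:rho}, the open set $D_+ \cap D_-$ contains a neighborhood of $\R \setminus (S_+ \cup S_-)$, and by \cref{rem:S0} the set $S_0$ has no accumulation points in $D_+ \cap D_-$. Thus $S$ is countable and, in particular, a Lebesgue null set.

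Next, fix continuity points $\lambda_1 < \lambda_2$ of $\mu$ with $[\lambda_1, \lambda_2] \subset \R \setminus S$. Combining \cref{thm:functional_calculus:general} with the locally uniform convergence $M(\impvar + \ii \eps) \to M(\impvar)$ on $\R \setminus S$ provided by \cref{prop:spectral_density} allows passing to the limit under the integral:
\begin{align*}
	\mu(\lambda_2) - \mu(\lambda_1)
	= \lim_{\eps \to 0+} \int_{\lambda_1}^{\lambda_2} M(s + \ii \eps)\,\der s
	= \int_{\lambda_1}^{\lambda_2} M(s)\,\der s.
\end{align*}
Since $\sigma_p(L) \subseteq S_0 \subseteq S$ by \cref{lem:functional_calc:point_spectrum}, every point of $\R \setminus S$ is a continuity point of $\mu$, so the restriction of $\mu$ to $\R \setminus S$ coincides with the absolutely continuous measure of density $M$. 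Equivalently, $\mu_{\mathrm{sc}} + \mu_{\mathrm{pp}}$ is concentrated on $S$.

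Finally, \cref{lem:functional_calc:point_spectrum} identifies $\mu_{\mathrm{pp}}$ with the stated jump sizes at $\sigma_p(L) \subseteq S$. Since $\mu_{\mathrm{sc}}$ is atom-free by definition, its being concentrated on the countable set $S$ forces $\mu_{\mathrm{sc}} = 0$. The claimed formula now follows by direct computation of $\int_\R f_i(\lambda) \overline{f_j(\lambda)}\,\der\mu_{ij}(\lambda)$. The main subtlety circumvented in this approach is the delicate boundary behaviour of $M(\impvar + \ii\eps)$ at points of $S$ -- band edges in $S_\pm$ and level crossings in $S_0$, where the density can become singular -- since a direct verification of absolute continuity across such points would be more involved; the countability of $S$ together with non-atomicity of singular continuous measures bypasses this entirely.
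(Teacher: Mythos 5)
Your argument is correct and essentially the same as the paper's: both rest on the countability of $S$, the identification of the density $M$ on $\R\setminus S$ via the locally uniform boundary limit from \cref{prop:spectral_density} combined with the increment formula of \cref{thm:functional_calculus:general}, and the atoms at $\sigma_p(L)$ from \cref{lem:functional_calc:point_spectrum}. The only difference is packaging: the paper splits the integral directly over the connected components of $\R\setminus S$ and the countably many points of $S$, whereas you phrase the same facts as a Lebesgue decomposition and observe that the singular continuous part, being concentrated on the countable set $S$, must vanish.
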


\begin{proof}
	By \cref{lem:rho,rem:S0} the set $S$ is at most countable and its complement has at most countably many connected components. Let us write\footnote{For simplicity of notation, we assume that both sets are countably infinite.} $S = \set{\lambda_n \colon n \in \N}$, $\R\setminus S = \cup_{n \in \N} I_n$. It follows that
	\begin{align*}
		\norm{f}_{L^2(\mu)}^2
		= \sum_{n \in \N} \norm{f \bbone_{I_n}}_{L^2(\mu)}^2
		+ \sum_{n \in \N} \norm{f(\lambda_n) \bbone_\set{\lambda_n}}_{L^2(\mu)}^2.
	\end{align*}
	\cref{lem:functional_calc:point_spectrum} shows that the second series is equal to
	\begin{align*}
	    \sum_{\lambda \in \sigma_p(L)} \frac{\abs{f(\lambda) \cdot v_0(\lambda)}^2}{\norm{\phi_0}_{L^2_V(\R)}^2}  
    \end{align*}

	For the first series, let $n \in \N$ and $[a, b] \subseteq I_n$. By \cref{thm:functional_calculus:general,prop:spectral_density} we have 
	\begin{align*}
		\int_a^b M(\lambda) \der \lambda
		= \lim_{\eps \to 0+} \int_a^b M(\lambda + \ii \eps) \der \lambda
		= \mu(b) - \mu(a).
	\end{align*}
	As $a, b$ were arbitrary, it follows that
	\begin{align*}
		\int_{I_n} f_i(\lambda) \overline{f_j(\lambda)} M_{ij}(\lambda) \der \lambda
		= \int_{I_n} f_i(\lambda) \overline{f_j(\lambda)} \der \mu_{ij}(\lambda)
	\end{align*}
	holds. Summing over $n$ and using the formula for $M(\lambda)$ given in \cref{prop:spectral_density} completes the proof.
\end{proof}

\subsection{Embeddings}\label{subsec:embeddings}

Let us generalize the torus $\T$ to a measure space $\Omega$ and $-\partial_t^2$ to a formal symmetric operator $\calL$ on $\Omega$ satisfying $\calL e_k = \nu_k e_k$, where  $(e_k)_{k\in\N}$ is an orthonormal system in $L^2(\Omega)$ and $\nu_k \geq 0$. For $k \in \N$, set $\hat u_k(x) = \int_\Omega u(x, t) \overline{e_k(t)} \der t$.
We consider the differential operator $L - \calL$ on $\R \times \Omega$, with $L$ acting on $x \in \R$ and $\calL$ on $t \in \Omega$. We associate to it the sesquilinear form
\begin{align*}
	b(u, v) 
	\coloneqq \int_{\R \times \Omega} \bigl((L - \calL) u \cdot \overline{v} \bigr) \der[V(x)](x, t)
	\coloneqq 
	\sum_{k \in \N} \int_\R \bigl( (\lambda - \nu_k) T_i[\hat u_k](\lambda) \cdot \overline{T_j[\hat v_k] (\lambda)} \bigr) \der \mu_{ij}(\lambda)
\end{align*}
Next we give the domain of the above form.
\begin{definition}
	Using the closed subspace
	\begin{align*}
		L^2_{(e_k)}
		&\coloneqq \Set{u \in L^2_V(\R \times \Omega) \colon u(x, \impvar) \in \overline{\lspan\!\set{e_k \colon k \in \N}}}
		\\ &= \Set{u \in L^2_V(\R \times \Omega) \colon u(x, t) = \sum_{k \in \N} \hat u_k(x) e_k(t)}
	\end{align*}
	of $L^2_V(\R \times \Omega)$, we define the domain $\calH$ of $b$ by
	\begin{align*}
		\calH = \set{u \in L^2_{(e_k)} \colon \norm{u}_\calH^2 = \ip{u}{u}_\calH < \infty}
	\end{align*}
	where 
	\begin{align*}
		\ip{u}{v}_\calH \coloneqq \sum_{k \in \N} \int_\R \bigl( \abs{\lambda - \nu_k} T_i[\hat u_k] (\lambda)\cdot \overline{T_j[\hat v_k] (\lambda)} \bigr) \der \mu_{ij}(\lambda).
	\end{align*}
\end{definition}

Our main goal is to investigate embeddings $\calH \embeds L^p_V(\R \times \Omega)$. We use interpolation in $p$. The end point $p=2$ will be trivial, and we prepare $p=\infty$ by first showing $L^\infty$-bounds for the spectral density as well as eigenfunctions.

\begin{lemma}\label{lem:applied_uniform_bound}
	There exists a constant $C > 0$ (independent of $\lambda$) such that
	\begin{align*}
		\frac{\norm{\phi_-}_{L^\infty(\R)}}{\abs{t(\phi_-; \phi_+)} \norm{\phi_+}_{L^2_V(R^+, R^++X^+)}} \leq C
	\end{align*}
	holds for all $\lambda \in \R\setminus S$ with $\rho_+(\lambda) \in \SwR$, analogously
	\begin{align*}
		\frac{\norm{\phi_+}_{L^\infty(\R)}}{\abs{t(\phi_+; \phi_-)} \norm{\phi_-}_{L^2_V(R^--X^-, R^-)}} \leq C
	\end{align*}
	holds for $\lambda \in \R \setminus S$ with $\rho_-(\lambda) \in \SwR$, and lastly
	\begin{align*}
		\frac{\norm{\phi_0}_{L^\infty(\R)}}{\norm{\phi_0}_{L^2_V(\R)}} \leq C
	\end{align*}
	holds for $\lambda \in \sigma_p(L)$.
\end{lemma}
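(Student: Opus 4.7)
The plan is to combine two ingredients: the uniform $L^2$--$L^\infty$ comparison for solutions of $-u'' = \lambda V(x) u$ on bounded intervals from \cref{sec:eigenfunction_bounds}, and the quasi-periodic relations $\phi_\pm(x \pm X^\pm) = \rho_\pm(\lambda) \phi_\pm(x)$ for $x \gtrless R^\pm$, combined with $\abs{\rho_\pm(\lambda)} \leq 1$ on $\R$ (from \cref{lem:rho}). Each of the three ratios is invariant under the rescalings $\phi_\pm \mapsto c_\pm \phi_\pm$: a short computation shows $r \mapsto (c_-/c_+) r$ and $t \mapsto (c_-/\overline{c_+}) t$, whence $\norm{\phi_-}_{L^\infty(\R)} / (\abs{t} \norm{\phi_+}_{L^2_V(R^+, R^+ + X^+)})$ is unchanged. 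Consequently I may fix convenient normalisations.

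For the third inequality, $\phi_0 \in L^2_V(\R)$ is an eigenfunction, which forces $\abs{\rho_\pm(\lambda)} < 1$ for $\lambda \in \sigma_p(L)$. Thus the quasi-periodicity $\phi_0(x + n X^+) = \rho_+^n \phi_0(x)$ for $x > R^+$ and its analogue at $-\infty$ give geometric decay of $\abs{\phi_0}$ away from $[R^-, R^+]$, so that
\[
\norm{\phi_0}_{L^\infty(\R)} = \norm{\phi_0}_{L^\infty([R^- - X^-, R^+ + X^+])}.
\]
The appendix result on this fixed bounded interval yields $\norm{\phi_0}_{L^\infty(\R)} \leq C \norm{\phi_0}_{L^2_V([R^- - X^-, R^+ + X^+])} \leq C \norm{\phi_0}_{L^2_V(\R)}$, uniformly in $\lambda \in \sigma_p(L)$, the uniformity being the content of \cref{sec:eigenfunction_bounds} (assumption \ref{ass:point_spectrum} controls the distribution of $\sigma_p(L)$).

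For the first inequality, fix the normalisation $\norm{\phi_+}_{L^2_V(R^+, R^+ + X^+)} = 1$, reducing the target to $\norm{\phi_-}_{L^\infty(\R)} \leq C \abs{t}$. The appendix on $[R^+, R^+ + X^+]$ gives $\norm{\phi_+}_{L^\infty([R^+, R^+ + X^+])} \leq C$, which extends to $[R^+, \infty)$ by $X^+$-periodicity of $\abs{\phi_+}$ (since $\abs{\rho_+} = 1$). From $\phi_- = r \phi_+ + t \overline{\phi_+}$ on $[R^+, \infty)$ we obtain $\norm{\phi_-}_{L^\infty([R^+, \infty))} \leq C(\abs{r} + \abs{t})$. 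On $(-\infty, R^-]$, $\abs{\rho_-} \leq 1$ together with $\phi_-(x - X^-) = \rho_- \phi_-(x)$ yield $\norm{\phi_-}_{L^\infty((-\infty, R^-])} \leq \norm{\phi_-}_{L^\infty([R^- - X^-, R^-])}$, and the appendix on the fixed interval $[R^- - X^-, R^+ + X^+]$ controls this together with the middle part $\norm{\phi_-}_{L^\infty([R^-, R^+])}$ by $\norm{\phi_-}_{L^2_V}$ on the same fixed interval; the latter is bounded by $\abs{r} + \abs{t}$ via the right-period decomposition. It thus remains to show $\abs{r} \leq C' \abs{t}$. Here the Wronskian identity $W(\phi_-, \overline{\phi_-}) = (\abs{r}^2 - \abs{t}^2) W(\phi_+, \overline{\phi_+})$ is central: in the spectral-gap case $\abs{\rho_-} < 1$, $\phi_- \in L^2((-\infty, 0])$ plus constancy of the Wronskian yields $W(\phi_-, \overline{\phi_-}) = 0$, forcing $\abs{r} = \abs{t}$; in the band case $\abs{\rho_-} = 1$, \cref{lem:wronskian_identity} furnishes
\[
(\abs{r}^2 - \abs{t}^2) \frac{\rho_+}{\rho_+'} \norm{\phi_+}_{L^2_V(R^+, R^+ + X^+)}^2 = -\frac{\rho_-}{\rho_-'} \norm{\phi_-}_{L^2_V(R^- - X^-, R^-)}^2,
\]
which must be balanced against the $L^\infty$-control of $\phi_-$ on the middle region. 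The second inequality follows from the same argument with $\pm$ swapped.

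The main obstacle is the band-band sub-case of the first two inequalities, where $\abs{\rho_+} = \abs{\rho_-} = 1$ and neither $\phi_\pm$ decays at infinity. There one has to exploit both Wronskian identities of \cref{lem:wronskian_identity} symmetrically, together with the uniform-in-$\lambda$ propagation estimates from the appendix on the fixed middle interval, in order to close $\abs{r} \leq C \abs{t}$; this is the delicate step where the uniform character of the eigenfunction bounds of \cref{sec:eigenfunction_bounds} is essential.
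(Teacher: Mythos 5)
Your overall architecture (uniform $L^\infty$--$L^2$ bounds from \cref{prop:uniform_ef_bound} plus semiperiodicity, and reduction of the first two inequalities to $\abs{r}\leq C\abs{t}$ via the Wronskian identity $W(\phi_-,\overline{\phi_-})=(\abs r^2-\abs t^2)W(\phi_+,\overline{\phi_+})$) matches the paper, and your treatment of the third inequality and of the gap case $\abs{\rho_-(\lambda)}<1$ (where $W(\phi_-,\overline{\phi_-})=0$, hence $\abs r=\abs t$) is correct. However, you explicitly leave open the band--band case $\abs{\rho_+(\lambda)}=\abs{\rho_-(\lambda)}=1$, calling it the delicate step and proposing to "balance" the Wronskian identity against $L^\infty$-control on the middle region. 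That is a genuine gap, and it is exactly the case where the paper's proof does its work -- but the resolution is a one-line sign argument, not an estimate: by \cref{lem:rho} one has $\rho_\pm'(\lambda)\overline{\rho_\pm(\lambda)}\in\ii(0,\infty)$ on spectral bands, hence $\rho_\pm(\lambda)/\rho_\pm'(\lambda)\in\ii(-\infty,0)$, so \cref{lem:wronskian_identity} gives $W(\phi_+,\overline{\phi_+})=\frac{\rho_+}{\rho_+'}\norm{\phi_+}_{L^2_V(R^+,R^++X^+)}^2\in\ii(-\infty,0)$ while (note the opposite sign in the "$-$" formula) $W(\phi_-,\overline{\phi_-})=-\frac{\rho_-}{\rho_-'}\norm{\phi_-}_{L^2_V(R^--X^-,R^-)}^2\in\ii(0,\infty)$. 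Dividing in $W(\phi_-,\overline{\phi_-})=(\abs r^2-\abs t^2)W(\phi_+,\overline{\phi_+})$ forces $\abs r^2-\abs t^2<0$, i.e. $\abs r<\abs t$. The identity you already wrote down contains all of this; you only failed to extract the sign of $\rho_\pm/\rho_\pm'$ from \cref{lem:rho}. In particular, no uniform-in-$\lambda$ propagation estimates and no control of $\phi_-$ on the middle interval enter this step; \cref{prop:uniform_ef_bound} is needed only afterwards, to bound $\norm{\phi_+}_{L^\infty(R^--X^-,R^++X^+)}\lesssim\norm{\phi_+}_{L^2(R^+,R^++X^+)}$ uniformly in $\lambda$ (your detour through an $L^2$-bound of $\phi_-$ on the fixed interval is unnecessary: once $\abs r\leq\abs t$, the pointwise identity $\phi_-=r\phi_++t\overline{\phi_+}$ on $[R^--X^-,\infty)$ together with $\norm{\phi_-}_{L^\infty(\R)}=\norm{\phi_-}_{L^\infty(R^--X^-,\infty)}$ finishes the estimate directly, as in the paper). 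Also, your appeal to \ref{ass:point_spectrum} for the third inequality is not needed; uniformity there comes solely from \cref{prop:uniform_ef_bound}.
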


\begin{proof}
	We consider the first inequality, and calculate
	\begin{align*}
		W(\phi_-, \overline{\phi_-})
		= W(r \phi_+ + t \overline{\phi_+}, \overline{r} \overline{\phi_+} + \overline{t} \phi_+) 
		= (\abs{r}^2 - \abs{t}^2) W(\phi_+, \overline{\phi_+}).
	\end{align*}
	Let us show that $\abs{r(\phi_-; \phi_+)} \leq \abs{t(\phi_-; \phi_+)}$. For this, by \cref{lem:wronskian_identity,lem:rho} we first have
	\begin{align*}
		W(\phi_+, \overline{\phi_+}) = \frac{\rho_+(\lambda)}{\rho_+'(\lambda)} \norm{\phi_+}_{L^2_V(R^+,R^++X^+)^2} \in \ii (-\infty, 0).
	\end{align*}
	\textit{Case 1:} Assume $\rho_-(\lambda) \in \SwR$. Then these lemmas also show $W(\phi_-, \overline{\phi_-}) \in \ii (0, \infty)$, so we even have $\abs{r} < \abs{t}$.

	\textit{Case 2:} Assume $\abs{\rho_-(\lambda)} < 1$. Recall that $\phi_-(x - X^-) = \rho_-(\lambda) \phi_-(x)$ holds. As this equation has at most $1$ linearly independent solution and $\rho_-(\lambda) \in \R$, we see that $\phi_-$ is a real-valued function up to multiplication with a complex scalar. Therefore $W(\phi_-, \overline{\phi_-}) = 0$, and in particular $\abs{r} = \abs{t}$. 
	
	Thus $\abs{r} \leq \abs{t}$ holds. Using semiperiodicity of both $\phi_+$ and $\phi_-$ in their respective regions as well as \cref{prop:uniform_ef_bound}, we then estimate
	\begin{align*}
		\norm{\phi_-}_\infty 
		&= \norm{\phi_-}_{L^\infty(R^--X^-, \infty)}
		= \norm{r \phi_+ + t \overline{\phi_+}}_{L^\infty(R^--X^-, \infty)}
		\\ &\leq 2 \abs{t} \norm{\phi_+}_{L^\infty(R^--X^-, \infty)}
		= 2 \abs{t} \norm{\phi_+}_{L^\infty(R^--X^-, R^++X^+)}
		\lesssim \abs{t} \norm{\phi_+}_{L^2(R^+, R^+ + X^+)}
	\end{align*}
	uniformly in $\lambda$.

	The second inequality can be shown in analogy to the first. So let us consider the third inequality. Here, using semiperiodicity of $\phi_0$ on both $(-\infty, R^-)$ and $(R^+, \infty)$ as well as \cref{prop:uniform_ef_bound} we find
	\begin{align*}
		\norm{\phi_0}_\infty 
		= \norm{\phi_0}_{L^\infty(R^- - X^-, R^+ + X^+)}
		\lesssim \norm{\phi_0}_{L^2_V(R^- - X^-, R^+ + X^+)}
		\leq \norm{\phi_0}_{L^2_V(\R)}
		&\qedhere
	\end{align*}
\end{proof}

Using the bounds of \cref{lem:applied_uniform_bound} we obtain a sufficient condition for boundedness of $L^p$-embeddings.

\begin{lemma}\label{lem:Lp-embedding}
	Let $p \in (2,\infty]$, let $(I_n^\pm)_{n\in\N}$ enumerate the connected components of $\R\setminus S_\pm$ on which $\abs{\rho_\pm} = 1$ holds, and assume that
	\begin{align}\label{eq:loc:condition_for_Lp}
		C \coloneqq \sum_{k\in\N} \norm{e_k}_\infty^2  
		\left( \sum_{n \in \N} \dist(\nu_k, I_n^+)^{-s} + \sum_{n \in \N} \dist(\nu_k, I_n^-)^{-s} + \sum_{\lambda \in \sigma_p(L)} \abs{\nu_k - \lambda}^{-s} \right)
		 < \infty
	\end{align}
	where $s \coloneqq \frac{p}{p-2}$.
	Then the embedding $\calH \embeds L^p_V(\R \times \Omega)$ is bounded.
\end{lemma}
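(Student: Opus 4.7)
The plan has two stages: a per-Fourier-mode $L^p$-estimate followed by summation over modes via a Hausdorff--Young-type inequality for the orthonormal system $(e_k)_{k \in \N}$.

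For fixed $k$, I would use \cref{thm:spectral_measure:explicit} to decompose $\hat u_k = T^{-1}[T[\hat u_k]]$ into three spectral channels: integrals over the essential-spectrum bands $I_n^+ \subset \set{\abs{\rho_+} = 1}$ (paired with $\phi_-$), $I_n^- \subset \set{\abs{\rho_-} = 1}$ (paired with $\phi_+$), and a sum over $\sigma_p(L)$ (paired with $\phi_0$). Writing $J_n^+(x)$ for the $I_n^+$-contribution to $\hat u_k$, the quasi-periodicity $\phi_+(R^++mX^++y; \lambda) = \rho_+(\lambda)^m \phi_+(R^++y; \lambda)$ on $[R^+, \infty)$ together with the identity $\phi_- = r\phi_+ + t\overline{\phi_+}$ (valid there since $\abs{\rho_+} = 1$) exposes $J_n^+(R^++mX^++y)$ as a sum of two Fourier coefficients in $m$ of functions on the arc $\theta \in (\theta_0, \theta_0+\pi)$, where $\theta = \arg\rho_+(\lambda)$ and $\der\theta = \abs{\rho_+'}\,\der\lambda$. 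Hausdorff--Young in $m$, Minkowski's integral inequality in $y$, Hölder in $\lambda$ with weight $\abs{\lambda - \nu_k}$, and the uniform eigenfunction bounds of \cref{lem:applied_uniform_bound} combine to yield the band-wise estimate $\norm{J_n^+}_{L^p_V([R^+, \infty))}^p \leq C\, a_n^{p/2} \dist(\nu_k, I_n^+)^{-p/2}$, where $a_n$ denotes the $I_n^+$-contribution to the $\phi_-$-channel portion of $\norm{\hat u_k}_{\calH_k}^2$. Analogous estimates hold on $(-\infty, R^-]$ for bands $I_n^-$ and directly for each $\phi_0$-contribution (which lies in $L^p_V(\R)$ by eigenfunction decay).

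To pass from the band-wise bounds to a per-mode estimate, I would apply Hölder in $n$ with exponents $\tau = 2(p-1)/p$ and $\tau' = 2(p-1)/(p-2)$ chosen so that $p'\tau = 2$ and $p'\tau'/2 = s = p/(p-2)$, where $p' = p/(p-1)$; these give $(\sum_n a_n^{p'\tau/2})^{1/\tau} \leq \norm{\hat u_k}_{\calH_k}^{p'}$ and $(\sum_n \dist(\nu_k, I_n^+)^{-s})^{1/\tau'} = \tilde G_k^{(2-p')/2}$, where $\tilde G_k$ denotes the inner bracket in \eqref{eq:loc:condition_for_Lp}. Combining the three channels yields the per-mode bound
\begin{align*}
	\norm{\hat u_k}_{L^p_V(\R)}^{p'} \leq C \norm{\hat u_k}_{\calH_k}^{p'} \tilde G_k^{(2-p')/2}.
\end{align*}
For the mode-summation I would interpolate (Riesz--Thorin) between the isometry $(c_k) \mapsto \sum_k c_k e_k$ from $\ell^2$ to $L^2(\Omega)$ and the bound $\norm{\sum_k c_k e_k}_{L^\infty(\Omega)} \leq \sum_k \abs{c_k}\norm{e_k}_\infty$ to obtain the pointwise estimate $\norm{u(x,\cdot)}_{L^p(\Omega)} \leq \bigl(\sum_k \abs{\hat u_k(x)}^{p'} \norm{e_k}_\infty^{p'(p-2)/p}\bigr)^{1/p'}$ for $p \in [2, \infty]$. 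Minkowski's integral inequality in $x$ exchanges $L^p_V(\R)$ and $\ell^{p'}$, and a final Hölder in $k$ with exponents $(2/p', 2/(2-p'))$ pairs $\sum_k \norm{\hat u_k}_{\calH_k}^2 = \norm{u}_\calH^2$ against $\sum_k \norm{e_k}_\infty^2 \tilde G_k$, which is finite by hypothesis. The algebraic identities $(p'(p-2)/p) \cdot (2/(2-p')) = 2$ and $(p'/(2s)) \cdot (2/(2-p')) = 1$ ensure that the exponents on $\norm{e_k}_\infty$ and $\tilde G_k$ in the resulting sum are exactly $2$ and $1$.

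The main obstacle is the extensive exponent bookkeeping across the two sequential Hölder inequalities (summation over spectral bands for each fixed $k$, then summation over modes). The various exponents must conspire to produce precisely $\dist(\nu_k, I_n^\pm)^{-s}$ on the spectral side and $\norm{e_k}_\infty^2$ on the modal side as demanded by \eqref{eq:loc:condition_for_Lp}; this is achieved through the conjugate-exponent identities $1/p + 1/p' = 1$ and $2/p + 1/s = 1$, but threading them consistently through the cascade of estimates requires care.
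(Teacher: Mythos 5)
Your plan is a genuinely different route from the paper's. The paper factors the embedding through an isometry $E\colon \calH \to L^2_{(e_k)}$, proves a single $L^\infty$-endpoint bound for a weighted multiplier $\iota_1$ by two applications of Cauchy--Schwarz (in $k$ and in the $L^2(\mu)$-pairing) together with \cref{thm:spectral_measure:explicit} and \cref{lem:applied_uniform_bound}, and then interpolates once with the trivial $L^2$-endpoint and sets $\theta = 1/s$. Your scheme instead works mode-by-mode and band-by-band, uses the quasi-periodic structure of $\phi_\pm$ to turn each band-contribution into Fourier coefficients in the period index $m$, applies Hausdorff--Young there, and then cascades Minkowski and Hölder inequalities. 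The exponent identities you record ($p'\tau = 2$, $p'\tau'/2 = s$, $\theta p' = (p-2)/(p-1)$, etc.) are all consistent, and the final power $C^{1/(2s)}$ matches what the paper obtains. So at the level of arithmetic the plan is coherent.

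There is, however, a genuine gap in the step you label ``combining the three channels.'' You obtain band-wise estimates $\norm{J_n^\pm}_{L^p_V}^{p'} \lesssim a_n^{p'/2}\dist(\nu_k, I_n^\pm)^{-p'/2}$ and then apply Hölder to $\sum_n a_n^{p'/2}\dist(\nu_k, I_n^\pm)^{-p'/2}$ with exponents $\tau, \tau'$ chosen so that $p'\tau = 2$ and $p'\tau'/2 = s$. This controls $\sum_n \norm{J_n^\pm}_{L^p_V}^{p'}$, but you never justify the implicit inequality $\norm{\hat u_k}_{L^p_V}^{p'} \lesssim \sum_n \norm{J_n^+}_{L^p_V}^{p'} + \sum_n \norm{J_n^-}_{L^p_V}^{p'} + \sum_\lambda \norm{K_\lambda}_{L^p_V}^{p'}$. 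For $p > 2$ (so $p' > 1$) this is \emph{not} the triangle inequality; it is a nontrivial $\ell^{p'}$-decomposition estimate for $L^p$ over disjoint spectral blocks of the transform $T$, and there is no reason it should hold in general. If instead you use the honest triangle inequality $\norm{\hat u_k}_{L^p_V} \leq \sum_n \norm{J_n^+} + \cdots$ and then Hölder the scalar series $\sum_n a_n^{1/2}\dist^{-1/2}$, the constraint $\sum_n a_n \leq \norm{\hat u_k}_{\calH_k}^2$ forces $\tau \geq 2$, hence $\tau' \leq 2$ and $\tau'/2 \leq 1 < s$, so the distance sum that appears is $\sum_n \dist(\nu_k, I_n^\pm)^{-\tau'/2}$ rather than $\sum_n \dist(\nu_k, I_n^\pm)^{-s}$; the former is not controlled by hypothesis \eqref{eq:loc:condition_for_Lp} since there are infinitely many bands. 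So the band recombination as stated does not close. (As a secondary point, the Hausdorff--Young argument only treats $x > R^+$ for the $I_n^+$-channel; the regions $x \in [R^-, R^+]$ and $x < R^-$ — where $\phi_-$ is either not quasi-periodic or quasi-periodic with $\rho_-$ instead of $\rho_+$ — need a separate treatment that you omit.) The paper avoids all of this by never decomposing $L^p$ over spectral bands: the Cauchy--Schwarz at the $L^\infty$-endpoint absorbs the band sum directly into the constant $\tilde G_k$ before any $L^p$-norm is taken, and then a single interpolation does the rest.
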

\begin{proof}
	Observe that the map
	\begin{align*}
		E \colon \calH \to L^2_{(e_k)}, 
		~ 
		u \mapsto \sum_{k \in \N} e_k(t) T^{-1}\bigl[\abs{\lambda - \nu_k}^{\frac12} T[\hat u_k](\lambda)\bigr](x)
	\end{align*} 
	is an isometric isomorphism. Set $m_k(\lambda) = \abs{\lambda - \nu_k}^{-\frac{s}{2}}$ and for $\theta \in [0, 1]$, $q \coloneqq \frac{2}{1 - \theta}$ we consider the map 
	\begin{align*}
		\iota_\theta \colon L^2_{(e_k)} \to L^q_V(\R \times \Omega),
		\quad
		u \mapsto \sum_{k \in \N} e_k(t) T^{-1}[m_k^\theta T[\hat u_k](\lambda)](x). 
	\end{align*}
    Our goal is to show that $\iota_\theta\circ E: \calH \to L^q_V(\R\times\Omega)$ is bounded for $\theta\in [0,1]$ and $q=\frac{2}{1-\theta}$ using interpolation. If we then set $\theta=\frac{1}{s}$ we get $p=q$, $\iota_\theta\circ E=\Id$ and thus the proof will be finished.

    \medskip
    
	\textit{Part 1:} First, let $\theta = 1$, $q = \infty$ and assume that $u \in L^2_{(e_k)}$ has finitely many nonzero modes $\hat u_k$ and that $T[\hat u_k]$ is compactly supported for all $k$. Then we calculate 
	\begin{align*}
		\abs{\iota_1[u](x, t)} 
		&= \abs{\sum_{k \in \N} e_k(t) \int_\R m_k(\lambda) T_i[\hat u_k](\lambda) \Psi_j(x; \lambda) \der \mu_{ij}(\lambda)}
		\\ &\leq \left( \sum_{k \in \N} \norm{T[\hat u_k]}_{L^2(\mu)}^2 \right)^{\nicefrac12}
		\left( \sum_{k \in \N} \abs{e_k(t)}^2 \norm{m_k (\impvar) \Psi(x; \impvar)}_{L^2(\mu)}^2 \right)^{\nicefrac12} 
	\end{align*}
	Note that the first term is
	\begin{align*}
		\sum_{k \in \N} \norm{T[\hat u_k]}_{L^2(\mu)}^2
		= \sum_{k \in \N} \norm{\hat u_k}_{L^2_V(\R)}^2 
		= \norm{u}_{L^2_{(e_k)}}^2.
	\end{align*}
	Now we consider the second term. Using the identities $\phi_\pm(x; \lambda) = \Psi(x; \lambda) \cdot v_\pm(\lambda), \phi_0(x; \lambda) = \Psi(x; \lambda) \cdot v(\lambda)$ as well as \cref{thm:spectral_measure:explicit,lem:applied_uniform_bound} we obtain
	\begin{align*}
		\MoveEqLeft \norm{m_k(\impvar) \Psi(x; \impvar)}_{L^2(\mu)}^2
		\\ &= \frac{1}{2 \pi} \int_{\R\setminus S} \frac{\bbone_\set{\abs{\rho_+(\lambda)} = 1} \abs{m_k(\lambda)}^2 \abs{\phi_-(x)}^2 \abs{\rho_+'(\lambda)}}{\abs{t(\phi_-; \phi_+)}^2 \norm{\phi_+}_{L^2_V(R^+, R^+ + X^+)}^2} 
		+ \frac{\bbone_\set{\abs{\rho_-(\lambda)} = 1} \abs{m_k(\lambda)}^2 \abs{\phi_+(x)}^2 \abs{\rho_-'(\lambda)}}{\abs{t(\phi_+; \phi_-)}^2 \norm{\phi_-}_{L^2_V(R^--X^-, R^-)}^2} \der \lambda
		\\ &\quad+ \sum_{\lambda \in \sigma_p(L)} \frac{\abs{m_k(\lambda)}^2 \abs{\phi_0(x)}^2}{\norm{\phi_0}_{L^2_V(\R)}^2}
		\\ &\lesssim \int_{\R\setminus S} \abs{m_k(\lambda)}^2 \left(\bbone_\set{\abs{\rho_+(\lambda)} = 1} \abs{\rho_+'(\lambda)} + \bbone_\set{\abs{\rho_-(\lambda)} = 1} \abs{\rho_-'(\lambda)} \right) \der \lambda
		+ \sum_{\lambda \in \sigma_p(L)} \abs{m_k(\lambda)}^2
	\end{align*}
	uniformly in $k, x$. Using \cref{lem:rho} we can further estimate
	\begin{align*}
		\int_{\R \setminus S} \bbone_\set{\abs{\rho_+(\lambda)} = 1}  \abs{m_k(\lambda)}^2 \abs{\rho_+'(\lambda)} \der \lambda
		&= \sum_{n \in \N} \int_{I_n^+} \abs{m_k(\lambda)}^2 \abs{\rho_+'(\lambda)} \der \lambda \\
		& \leq \sum_{n \in \N} \dist(\nu_k, I_n^+)^{-s} \int_{I_n^+} \abs{\rho_+'(\lambda)} \der \lambda
		\\ &\leq \pi \sum_{n \in \N} \dist(\nu_k, I_n^+)^{-s},
	\end{align*}
	with similar estimates for the other two terms. Recalling the definition of $C$, we so far have shown
	\begin{align}\label{eq:loc:Linf_estimate}
		\norm{\iota_1[u]}_\infty \lesssim C^{\frac12} \norm{u}_{L^2_{(e_k)}}
	\end{align}
	for $u \in L^2_{(e_k)}$ with compact frequency support. By density, \eqref{eq:loc:Linf_estimate} holds for all $u \in L^2_{(e_k)}$.

	\textit{Part 2:} For $\theta = 0$ and $q = 2$, $\iota_0 \colon L^2_{(e_k)} \to L^2_V$ is the identity map. By interpolation, cf. e.g. \cite[Chapter~2]{Lunardi}, the map $\iota_\theta \colon L^2_{(e_k)} \to L^q_V(\R \times \Omega)$, $q = \frac{2}{1 - \theta}$, is bounded for all $\theta \in [0, 1]$. The claim of \cref{lem:Lp-embedding} follows by setting $\theta = \frac{1}{s}$ since for this choice $\iota_\theta E$ is the identity map and $p = q$ holds.
\end{proof}

\begin{theorem}\label{thm:Lp-embedding:general}
	Let $N \in \N$ and assume for the eigenvalues $\nu_k$ of $\calL$ that $\# \set{k \colon \sqrt{\nu_k} \in B} \leq N$ for any interval $B$ of length $1$, which implies that $(\nu_k)$ grows at least quadratically. Generalizing \ref{ass:spectrum} we assume additionally
	\begin{align*}
		\inf\set{\abs{\sqrt{\lambda} - \sqrt{\nu_k}} \colon k \in \N, \lambda \in \sigma(L)} > 0.
	\end{align*}
	Moreover, assume $\alpha, \beta \geq 0$ exist such that the eigenfunctions of $\calL$ satisfy $\norm{e_k}_\infty \lesssim \nu_k^{\frac\alpha2}$, and generalizing \ref{ass:point_spectrum} assume for the point spectrum of $L$ that
	\begin{align*}
		\sum_{\lambda \in \sigma_p(L)} \lambda^{-r} < \infty
	\end{align*}
	holds for $r > \frac12 + \beta$. Then the embedding $\calH \embeds L^p_V(\R \times \Omega)$ is bounded and $\calH \embeds L^p_V(A \times \Omega)$ is compact for all $p \in [2, 2 + \frac{1}{\alpha + \beta})$ and $A \subseteq \R$ compact.
\end{theorem}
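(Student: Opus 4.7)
The strategy is to apply \cref{lem:Lp-embedding} with exponent $s = p/(p-2)$, noting that the range $p \in (2, 2 + \frac{1}{\alpha+\beta})$ corresponds to $s > 2(\alpha+\beta)+1$; for $p=2$ the embedding is trivial via $\calH \embeds L^2_V(\R \times \Omega)$. I would then verify the summability condition \eqref{eq:loc:condition_for_Lp} by estimating the three subsums in the quantity $C$ separately.

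For the band contributions $\sum_n \dist(\nu_k, I_n^\pm)^{-s}$, the intervals $J_n^\pm \coloneqq \sqrt{I_n^\pm}$ are essentially the bands of the periodic Floquet operators $-\frac{1}{V_\per^\pm(x)}\dxsquare$, hence distributed in $[0,\infty)$ with spacing $\sim 1$. Letting $\lambda_n^\pm$ denote the point in $I_n^\pm$ closest to $\nu_k$, we have $\dist(\nu_k, I_n^\pm) = |\sqrt{\nu_k} - \sqrt{\lambda_n^\pm}|(\sqrt{\nu_k} + \sqrt{\lambda_n^\pm})$, and the generalized spectral assumption gives $|\sqrt{\nu_k} - \sqrt{\lambda_n^\pm}| \geq \delta > 0$. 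Splitting the sum into (i) the $\landauO(1)$ bands nearest to $\sqrt{\nu_k}$ (each contributing $\geq \delta\sqrt{\nu_k}$ in distance), (ii) the intermediate bands in $[\sqrt{\nu_k}/2, 2\sqrt{\nu_k}]$ (yielding a convergent $\sum_j j^{-s}$-tail), and (iii) bands with $\sqrt{\lambda_n^\pm} \gg \sqrt{\nu_k}$ (contributing $\sum n^{-2s}$), I expect the bound $\sum_n \dist(\nu_k, I_n^\pm)^{-s} \lesssim \nu_k^{-s/2}$ for $s > 1$. Combined with $\|e_k\|_\infty^2 \lesssim \nu_k^\alpha$ and the quadratic lower bound $\nu_k \gtrsim k^2$ from the density assumption, the band contribution to $C$ is bounded by $\sum_k \nu_k^{\alpha - s/2} \lesssim \sum_k k^{2\alpha - s}$, which converges for $s > 2\alpha + 1$.

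For the point spectrum I would exchange the order of summation and estimate $\sum_k \nu_k^\alpha |\nu_k - \lambda|^{-s}$ for fixed $\lambda \in \sigma_p(L)$. Splitting by the regimes $\nu_k \leq \lambda/2$, $\nu_k \in [\lambda/2, 2\lambda]$, and $\nu_k \geq 2\lambda$, the dominant contribution for $s > 1$ arises from the near-diagonal range (where the spectral gap forces $|\nu_k - \lambda| \gtrsim j \sqrt{\lambda}$ when $|\sqrt{\nu_k} - \sqrt{\lambda}| \asymp j$) and yields $\lambda^{\alpha - s/2}$. Summing over $\lambda$ using $\sum_\lambda \lambda^{-r} < \infty$ for $r > 1/2 + \beta$ requires $s > 2\alpha + 1 + 2\beta = 2(\alpha+\beta)+1$, which matches our assumption. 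Thus $C < \infty$ and \cref{lem:Lp-embedding} yields the bounded embedding.

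For local compactness on a compact set $A \subseteq \R$, I would follow the truncation strategy from \cref{prop:H_embedding}(ii): letting $T_K$ project onto the modes with $\nu_k \leq K$, the tail estimate from \cref{lem:Lp-embedding} with the sums in $C$ restricted to $\nu_k > K$ shows that $\iota \circ T_K \to \iota$ in operator norm as $K \to \infty$. For each fixed $K$, $\iota \circ T_K$ is compact because only finitely many Fourier modes survive and each mode $\hat u_k \in \calH_k$ embeds continuously into $H^1(\R)$ (by the argument establishing \eqref{eq:teil2} in \cref{prop:H_embedding}), so Rellich--Kondrachov gives compact embedding into $L^p(A)$. The main technical obstacle is the careful combinatorial splitting of the band sum required to exploit simultaneously the uniform gap $\delta$ (for bands neighboring $\sqrt{\nu_k}$) and the linear separation of distant bands; without balancing both effects the sharper threshold $s > 2(\alpha+\beta)+1$ would have to be replaced by a weaker one like $s > 2(\alpha+\beta)+2$.
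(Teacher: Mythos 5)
Your overall strategy matches the paper: reduce to the summability condition $C<\infty$ of \cref{lem:Lp-embedding} with $s=\frac{p}{p-2}$, split $C$ into band and point-spectrum contributions, and obtain compactness via Fourier truncation and operator-norm convergence. Your treatment of the point spectrum (swap to the outer $\lambda$-sum, control the inner $k$-sum via the assumed density of $\nu_k$ and the spectral gap, then invoke $\sum_\lambda \lambda^{-r}<\infty$) and your Part~4-style compactness argument both align with what the paper does.

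There is, however, a genuine gap in your band-sum estimate, and it lies precisely where you flag a ``technical obstacle.'' You keep $k$ on the outside and claim $\sum_n \dist(\nu_k, I_n^\pm)^{-s} \lesssim \nu_k^{-s/2}$ by splitting the band edges into near, intermediate, and far regimes. The near and intermediate regimes silently assume that the square roots $\sqrt{\lambda_j^\pm}$ of the band edges are linearly spaced with bounded density in $[0,\infty)$, i.e.\ a two-sided Weyl asymptotic. The paper's Part~1 only establishes the one-sided Min-Max lower bound $\lambda_j^\pm \gtrsim (j-1)^2$; nothing in the paper controls how many $\sqrt{\lambda_j^\pm}$ can fall into a unit interval near $\sqrt{\nu_k}$. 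With only the lower bound and the gap $|\sqrt{\nu_k}-\sqrt{\lambda_j^\pm}|\geq\delta$ one gets at best $\dist(\nu_k,I_n^\pm) \gtrsim \delta\max\{\sqrt{\nu_k},\sqrt{\lambda_j^\pm}\}$, and summing over the $\lesssim\sqrt{\nu_k}$ band edges below $2\nu_k$ then gives $\sum_n\dist(\nu_k,I_n^\pm)^{-s}\lesssim\nu_k^{(1-s)/2}$, not $\nu_k^{-s/2}$; feeding this through $\sum_k$ only yields the weaker threshold $s>2\alpha+2$ you mention. The paper circumvents this entirely by swapping the order: it moves to $\sum_j \max\{\sqrt{\lambda_j^+},\delta\}^{-s}\sum_k |\nu_k|^\alpha\, |\sqrt{\nu_k}-\sqrt{\lambda_j^+}|^{-s}$, so the \emph{inner} sum now runs over $k$ and is controlled by the hypothesis $\#\{k:\sqrt{\nu_k}\in B\}\leq N$ (yielding $\lesssim 1+(\lambda_j^+)^\alpha$), while the \emph{outer} $j$-sum needs only the Min-Max lower bound $\lambda_j^+\gtrsim (j-1)^2$. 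The point is that the density of $\nu_k$ is a stated assumption of the theorem, whereas the density of $\lambda_j^\pm$ is not; the order of summation must respect this asymmetry. Your approach can be completed by additionally proving a Weyl-type upper bound $\lambda_j^\pm\lesssim j^2$ with matching constants (or invoking Sturm-Liouville eigenvalue asymptotics for BV potentials), but as written that step is missing and is the crux of obtaining the sharp range $p<2+\frac{1}{\alpha+\beta}$.
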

\begin{proof} 
	Set $\delta \coloneqq \inf\set{\abs{\sqrt{\lambda} - \sqrt{\nu_k}} \colon k \in \N, \lambda \in \sigma(L)}$ and $s \coloneqq \frac{p}{p - 2} > 1 + 2\alpha + 2 \beta$. We show that the assumptions of \cref{lem:Lp-embedding} are satisfied. 

	\textit{Part 1:} We estimate the spectral bands $I_n^+$. By definition, for $\lambda \in \set{\min I_n^+, \max I_n^+}$ we have $\rho_+(\lambda) \in \set{-1, 1}$. Therefore there exists an $X$-periodic or $X$-antiperiodic solution $\phi$ of the problem
	\begin{align} \label{eq:periodic_plus}
		- \frac{1}{V_\per^+(x)} \dxsquare \phi = \lambda \phi, 
		\qquad
		\phi(x + 2 X) = \phi(x).
	\end{align}
	If we enumerate the eigenvalues of \eqref{eq:periodic_plus} in increasing order by $(\lambda_j^+)_{j \in \N}$, we get in particular
	\begin{align*}
		\sum_{n \in \N} \dist(\nu_k, I_n^+)^{-s}
		\leq 2 \sum_{j \in \N} \abs{\nu_k - \lambda_j^+}^{-s}.
	\end{align*}
	Moreover, using that the eigenvalues of the $2X$-periodic Laplacian are $0, \frac{\pi}{X}, \frac{\pi}{X}, 2 \frac{\pi}{X}, 2 \frac{\pi}{X}, \dots$ (with eigenfunctions $1, \sin(\frac{\pi x}{X}), \cos(\frac{\pi x}{X}), \sin(2 \frac{\pi x}{X}), \cos(2 \frac{\pi x}{X}), \dots$), by the Min-Max-Principle (cf. \cite[Chapter~11.2, Theorem~1]{straussPDE}) we have
	\begin{align*}
		\lambda_j^+
		&= \inf_{\substack{Y \subseteq H^1_\per(\R) \\ \dim(Y) = j}} 
		~\sup_{\substack{u \in Y\\ u \neq 0}} \frac{\int_0^{2X} \abs{u'}^2 \der x}{\int_0^{2X} \abs{u}^2 \der[V_\per^+(x)]{x}}
		\\ &\geq \frac{1}{\norm{V_\per^+}_\infty} \inf_{\substack{Y \subseteq H^1_\per(\R) \\ \dim(Y) = j}} 
		~\sup_{\substack{u \in Y\\ u \neq 0}} \frac{\int_0^{2X} \abs{u'}^2 \der x}{\int_0^{2X} \abs{u}^2 \der{x}}
		= \frac{1}{\norm{V_\per^+}_\infty} \left(\frac{\pi \floor{\frac{j}{2}}}{X}\right)^2.
	\end{align*}

	\textit{Part 2:} 
	The following estimates are inspired by \cite{maier_reichel_schneider}. First, we estimate the double sum via
	\begin{align*}
		\sum_{k \in \N} \norm{e_k}_\infty^2 \sum_{n \in \N} \dist(\nu_k, I_n^+)^{-s}
		&\lesssim \sum_{k \in \N} \abs{\nu_k}^\alpha \sum_{j \in \N} \abs{\nu_k - \lambda_j^+}^{-s}\\
		& = \sum_{k \in \N} \abs{\nu_k}^\alpha \sum_{j \in \N} \Abs{\sqrt{\nu_k} - \sqrt{\lambda_j^+}}^{-s} \Abs{\sqrt{\nu_k} + \sqrt{\lambda_j^+}}^{-s}
		\\ &\leq \sum_{j \in \N} \max\Set{\sqrt{\lambda_j^+}, \delta}^{-s} \sum_{k \in \N} \abs{\nu_k}^\alpha \Abs{\sqrt{\nu_k} - \sqrt{\lambda_j^+}}^{-s}.
	\end{align*}
    Next, observe that for each $k\in \N$ we have $\sqrt{\nu_k}\in \bigcup_{m=0}^\infty [m,m+1)$ as well as $\sqrt{\nu_k}-\sqrt{\lambda_j^+}\in \bigcup_{m=-\infty}^\infty [m,m+1)$. Together with the assumption on the number of eigenvalues per interval of length $1$, we use this (combined with a separate consideration for $m=-1,0$) to estimate for fixed $j$
	\begin{align*}
		\sum_{\substack{k \in \N \\ \abs{\nu_k} \leq 2 \lambda_j^+}} \abs{\nu_k}^\alpha \Abs{\sqrt{\nu_k} - \sqrt{\lambda_j^+}}^{-s}
		&\leq 2^\alpha (\lambda_j^+)^\alpha \sum_{k \in \N} \abs{\sqrt{\nu_k} - \sqrt{\lambda_j^+}}^{-s}
		\leq 2^{\alpha} (\lambda_j^+)^\alpha \cdot 2 N \left(\delta^{-s} + \sum_{m=1}^{\infty} m^{-s} \right),
		\\
		\sum_{\substack{k \in \N \\ \abs{\nu_k} > 2 \lambda_j^+}} \abs{\nu_k}^\alpha \Abs{\sqrt{\nu_k} - \sqrt{\lambda_j^+}}^{-s}
		&\leq \left(1 - \tfrac{1}{\sqrt{2}}\right)^{-s} \sum_{k \in \N} \abs{\sqrt{\nu_k}}^{2 \alpha - s}
		\leq \left(1 - \tfrac{1}{\sqrt{2}}\right)^{-s} 2 N \left(\delta^{2 \alpha - s} + \sum_{m=1}^{\infty} m^{2 \alpha - s} \right).
	\end{align*}
	Therefore we obtain
	\begin{align*}
		\sum_{j \in \N} \max\Set{\sqrt{\lambda_j^+}, \delta}^{-s} \sum_{k \in \N} \abs{\nu_k}^\alpha \Abs{\sqrt{\nu_k} - \sqrt{\lambda_j^+}}^{-s}
		\leq C \sum_{j\in\N} \max\Set{\sqrt{\lambda_j^+}, \delta}^{-s} (1 + (\lambda_j^+)^\alpha)
	\end{align*}
	where the right-hand side is finite since $\lambda_j^+ \gtrsim (j-1)^2$ by part 1 and $s - 2 \alpha > 1$ by assumption.

	\textit{Part 3:}
	So far, we have shown
	\begin{align*}
		\sum_{k \in \N} \norm{e_k}_\infty^2 \sum_{n \in \N} \dist(\nu_k, I_n^+)^{-s} < \infty.
	\end{align*}
	By the arguments above, this also holds for $I_n^-$ and we have the estimate
	\begin{align*}
		\sum_{k \in \N} \norm{e_k}_\infty^2 \sum_{\lambda \in \sigma_p(L)} \abs{\nu_k - \lambda}^{-s}
		\leq C \sum_{\lambda \in \sigma_p(L)} \max\Set{\sqrt{\lambda}, \delta}^{-s} (1 + \lambda^\alpha)
	\end{align*}
	which is finite by assumption.

	\textit{Part 4:} It remains to show local compactness of the embedding $E \colon \calH \to L^p(\R \times \Omega)$. We only consider $p > 2$ and take $A \subseteq \R$ compact. For $K \in \N$ consider
	\begin{align*}
		E_K \colon \calH \to L^p_V(\R \times \Omega), 
		u \mapsto \sum_{k=1}^{K} e_k(t) \hat u_k(x).
	\end{align*}
	As the map $\calH \to H^1(\R), u \mapsto \hat u_k$ is bounded for each $k$, $E_K$ is a compact operator. A small modification of \cref{lem:Lp-embedding} shows that $\norm{E - E_K} \lesssim C_K^{\frac{1}{2 s}}$ where 
	\begin{align*}
		C_K \coloneqq \sum_{\substack{k \in \N \\ k > K}} \norm{e_k}_\infty^2  
		\left( \sum_{n \in \N} \dist(\nu_k, I_n^+)^{-s} + \sum_{n \in \N} \dist(\nu_k, I_n^+)^{-s} + \sum_{\lambda \in \sigma_p(L)} \abs{\nu_k - \lambda}^{-s} \right)
	\end{align*}
	As the series in \eqref{eq:loc:condition_for_Lp} converges, $C_K \to 0$ as $K \to \infty$, and therefore the limit $E \colon \calH \to L^p_V(A \times \Omega)$ is also compact.
\end{proof}

	\appendix


\section{Vector-valued \texorpdfstring{$L^2$}{L2}-spaces}
\label{sec:vector_L2}

Following \cite{dunfordschwartz}, we give a definition of the Hilbert space $L^2(\mu)$ appearing in \cref{sec:embeddings} where $\mu$ is an increasing matrix-valued function.

\begin{definition}
	We call a function $\mu \colon \R \to \C^{d \times d}$ \emph{increasing} if $\mu(y) - \mu(x)$ is Hermitian and positive semidefinite for all $y \geq x$.
\end{definition}

\begin{lemma} \label{lem:rs_integral}
	Let $\mu \colon \R \to \C^{d \times d}$ be increasing. Then the coefficients $\mu_{ij}$ are locally of bounded variation, and hence there exist $\C$-valued measures $\nu_{ij}$, defined on the bounded Borel subsets of $\R$, which are $\sigma$-additive for sets with bounded union, such that
	\begin{align*}
		\int_\R \varphi \der \mu_{ij}
		= \int_\R \varphi \der \nu_{ij}
	\end{align*}
	for all $\varphi \in C_c(\R)$. Here, the left-hand side is a Riemann-Stieltjes integral.
\end{lemma}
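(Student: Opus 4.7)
The plan splits naturally into two parts: first showing local bounded variation of the coefficients $\mu_{ij}$, and second constructing the complex measures $\nu_{ij}$ and verifying the integral identity.

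For the bounded variation claim, I would first note that plugging the standard basis vector $e_i \in \C^d$ into the positive semidefiniteness condition gives $\mu_{ii}(y) - \mu_{ii}(x) = e_i^* (\mu(y)-\mu(x)) e_i \geq 0$ for $y \geq x$, so the diagonal entries are real-valued and monotone increasing, hence locally of bounded variation. For the off-diagonal entries, the key ingredient is the inequality
\begin{align*}
    \abs{\mu_{ij}(y) - \mu_{ij}(x)}^2 \leq \bigl(\mu_{ii}(y) - \mu_{ii}(x)\bigr)\bigl(\mu_{jj}(y) - \mu_{jj}(x)\bigr)
\end{align*}
for $y \geq x$, which follows by applying positive semidefiniteness of $\mu(y)-\mu(x)$ to vectors of the form $\alpha e_i + \beta e_j$ (equivalently, nonnegativity of the $2\times 2$ principal minor). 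Then for any partition $a = x_0 < x_1 < \dots < x_n = b$, Cauchy–Schwarz in $\R^n$ yields
\begin{align*}
    \sum_{k=0}^{n-1} \abs{\mu_{ij}(x_{k+1}) - \mu_{ij}(x_k)}
    \leq \sqrt{\mu_{ii}(b) - \mu_{ii}(a)} \sqrt{\mu_{jj}(b) - \mu_{jj}(a)},
\end{align*}
which is uniform in the partition, so $\mu_{ij}$ is of bounded variation on $[a,b]$.

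For the second part, I would construct the measures $\nu_{ij}$ as follows. For $i = j$, the function $\mu_{ii}$ is real and monotone increasing; replacing it by its right-continuous representative (which agrees with $\mu_{ii}$ outside a countable set) gives a positive Borel measure $\nu_{ii}$ on the bounded Borel subsets of $\R$ via the classical Lebesgue–Stieltjes construction. For $i \neq j$, decompose $\mu_{ij} = \Re \mu_{ij} + \ii \Im \mu_{ij}$; both real and imaginary parts are locally of bounded variation by the first part, so Jordan's decomposition writes each as a difference of two locally increasing functions. The Lebesgue–Stieltjes construction applied to each summand then gives four positive Borel measures, whose signed combination defines a complex measure $\nu_{ij}$ on bounded Borel sets. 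The $\sigma$-additivity for collections with bounded union is inherited from the component positive measures.

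Finally, for the integral identity, let $\varphi \in C_c(\R)$ with $\supp \varphi \subseteq [a,b]$. Since $\mu_{ij}$ has bounded variation on $[a,b]$ and $\varphi$ is continuous, the Riemann–Stieltjes integral $\int_\R \varphi \der \mu_{ij}$ exists, and by the classical correspondence between Riemann–Stieltjes integrals against functions of bounded variation and Lebesgue integrals against the associated Lebesgue–Stieltjes measures it coincides with $\int_\R \varphi \der \nu_{ij}$; this correspondence is insensitive to the at most countably many points where $\mu_{ij}$ differs from its right-continuous representative because $\varphi$ is continuous. None of the steps is particularly deep; the only mild technical point to watch is the consistency of the Lebesgue–Stieltjes measures built on different bounded intervals and the handling of discontinuities in the Jordan decomposition, both of which are handled by standard monotone-function measure theory.
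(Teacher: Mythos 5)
Your proof is correct, and the first half is essentially the paper's: both rest on the $2\times2$ principal-minor inequality $\abs{\mu_{ij}(y)-\mu_{ij}(x)}^2\leq(\mu_{ii}(y)-\mu_{ii}(x))(\mu_{jj}(y)-\mu_{jj}(x))$; you sum it via Cauchy--Schwarz over the partition, while the paper uses the arithmetic--geometric mean bound and telescopes the monotone diagonal increments, a cosmetic difference (your variation bound $\sqrt{(\mu_{ii}(b)-\mu_{ii}(a))(\mu_{jj}(b)-\mu_{jj}(a))}$ is even marginally sharper). The second half is where you genuinely diverge. The paper obtains the measures $\nu_{ij}$ in one stroke from the Riesz--Markov--Kakutani representation theorem: since $\mu_{ij}$ is locally BV, the map $\varphi\mapsto\int_\R\varphi\der\mu_{ij}$ is a locally bounded linear functional on $C_c(\R)$, and RMK produces a (complex, locally finite) measure for which the stated integral identity holds \emph{by construction}, so nothing further needs checking. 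You instead build $\nu_{ij}$ by hand: right-continuous representatives and the Lebesgue--Stieltjes construction on the diagonal, real/imaginary parts plus Jordan decomposition off the diagonal, and then you must verify the integral identity separately via the classical Riemann--Stieltjes/Lebesgue--Stieltjes correspondence, together with the observation that modifying the integrator at countably many points does not change a Riemann--Stieltjes integral of a continuous integrand (and the consistency of the local constructions across intervals). Your route is more elementary and self-contained, at the cost of these extra standard verifications; the paper's route outsources exactly this bookkeeping to RMK and is correspondingly shorter. Both are valid proofs of the lemma.
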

\begin{proof}
	By positive definiteness, for $y \geq x$ we have
	\begin{align*}
		\abs{\mu_{ij}(y) - \mu_{ij}(x)}
		\leq \sqrt{\left(\mu_{ii}(y) - \mu_{ii}(x)\right)
		\left(\mu_{jj}(y) - \mu_{jj}(x)\right)}
		\leq \tfrac12 \left( \mu_{ii}(y) - \mu_{ii}(x) + \mu_{jj}(y) - \mu_{jj}(x) \right).
	\end{align*}
	Since $\mu_{ii}$ and $\mu_{jj}$ are increasing, it follows that
	\begin{align*}
		\norm{\mu_{ij}}_{BV[a,b]}
		\leq \tfrac12 \left( \mu_{ii}(b) - \mu_{ii}(a) + \mu_{jj}(b) - \mu_{jj}(a) \right) < \infty.
	\end{align*}
	Existence of the measure $\mu_{ij}$ then follows from the Riesz-Markov-Kakutani representation theorem, see e.g. \cite[Theorem~6.19]{rudin}
\end{proof}

\begin{definition}
	Let $\mu \colon \R \to \C^{d \times d}$ be increasing, $\nu_{ij}$ be the measures from \cref{lem:rs_integral}, and $f \colon \R \to \C^d$ be Borel measurable. Let $\nu$ be a $\sigma$-finite Borel measure on $\R$ such that all $\nu_{ij}$ are absolutely continuous with respect to $\nu$. Then define
	\begin{align*}
		\norm{f}_{L^2(\mu)}^2
		\coloneqq \int_\R f_i \overline{f_j} \der \mu_{ij}(\lambda)
		\coloneqq \int_\R \left( f_i \overline{f_j} \dv{\nu_{ij}}{\nu} \right) \der \nu,
	\end{align*}
	where we used Einstein summation convention and $\dv{\nu_{ij}}{\nu}$ is the Radon-Nikodým derivative.
\end{definition}
\begin{remark}
	By \cite[Lemma~XIII.5.7]{dunfordschwartz}, the matrix $\bigl(\dv{\nu_{ij}}{\nu}\bigr)_{i,j}$ is positive semidefinite $\nu$-almost everywhere. Hence, the last integrand above is nonnegative and therefore the integral exists in $[0, \infty]$. Note that such $\nu$ always exists and that the $L^2(\mu)$-norm does not depend on the choice of $\nu$. 
	One can for example take $\nu(E) \coloneqq \sup_{n \in \N} \sum_{i,j=1}^{d} \abs{\nu_{ij}}(E \cap [-n, n])$. 
\end{remark}
\begin{definition}
	Define $L^2(\mu)$ as the quotient space of $\Set{f \colon \R \to \C^d \text{ meas.} \big\vert \norm{f}_{L^2(\mu)} < \infty}$ modulo $\Set{f \colon \R \to \C^d \text{ meas.} \big\vert \norm{f}_{L^2(\mu)} = 0}$.
	By \cite[Theorem~XIII.5.10]{dunfordschwartz}, $L^2(\mu)$ is a Hilbert space with inner product
	\begin{align*}
		\ip{f}{g}_{L^2(\mu)}
		\coloneqq \int_\R f_i \overline{g_j} \der \mu_{ij}(\lambda)
		\coloneqq \int_\R \left( f_i \overline{g_j} \dv{\nu_{ij}}{\nu} \right) \der \nu.
	\end{align*}
\end{definition}

\begin{remark}
	Multiplication with matrix-valued functions need not be well-defined on $L^2(\mu)$. Consider for example
	\begin{align*}
		\mu(\lambda) = \begin{pmatrix} \lambda & -\lambda \\ -\lambda & \lambda \end{pmatrix} \in \C^{2 \times 2}, \qquad 
		M = \begin{pmatrix}
			1 & 0 \\
			0 & -1
		\end{pmatrix},
	\end{align*}
	so that $\norm{f}_{L^2(\mu)}^2 = \int_\R \abs{f_1 - f_2}^2 \der \lambda$. In particular, $(g,g)^\top = 0$ in $L^2(\mu)$ for arbitrary measurable $g$, whereas $\norm{M (g,g)^\top}_{L^2(\mu)} = 2 \norm{g}_{L^2(\R)}$ need not be zero, nor finite.
	On the other hand, multiplication with scalar-valued measurable functions $m \colon \R \to \C$ is well-defined, and in addition $\norm{m f}_{L^2(\mu)} \leq \sup_{x \in \R}\abs{m(x)} \cdot \norm{f}_{L^2(\mu)}$ holds.
\end{remark}

\section{Eigenfunction bounds}
\label{sec:eigenfunction_bounds}

In this section we discuss uniform estimates on functions $u$ solving 
\begin{align}\label{eq:evp_bounded}
	-u'' = \lambda V(x) u,
	\qquad\text{for}\quad
	x \in I
\end{align}
on a compact interval $I$ and with $\lambda, V$ positive.
For Schrödinger operators, uniform eigenfunction bounds with respect to $\lambda$ of the type $\norm{u}_\infty \lesssim \norm{u}_2$ are known, cf. e.g. \cite{komornik}. 
These can be transferred to the weighted eigenvalue problem \eqref{eq:evp_bounded} using the Liouville transform if $V$ is twice differentiable. However, here we show a generalization of this inequality under the weaker assumption that $V$ is of bounded variation.

\begin{proposition}\label{prop:uniform_ef_bound}
	Let $I, J$ be bounded intervals of positive length with $J \subseteq I$, and $V \in BV(I)$ with $\essinf_I V > 0$. Then there exists a constant $C = C(I, J, V)> 0$ such that 
	\begin{align}\label{eq:loc:uniform_ef_bound}
		\norm{u}_{L^\infty(I)} \leq C \norm{u}_{L^2(J)}	
	\end{align}
	for all $\lambda \geq 0$ and all solutions $u$ to \eqref{eq:evp_bounded}.

	Note that the reverse inequality $\norm{u}_{L^2(J)} \leq \norm{u}_{L^2(I)} \leq \abs{I}^{\nicefrac12} \norm{u}_{L^\infty(I)}$ always holds. 
\end{proposition}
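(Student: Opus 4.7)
The plan is to construct a uniform-in-$\lambda$ energy functional that varies only moderately across $I$, and then to bound this energy at a well-chosen point in $J$ via a cutoff-based estimate. Concretely, I would set $\mu \coloneqq \max(1, \lambda)$ and define
\begin{align*}
	E(x) \coloneqq V(x) \abs{u(x)}^2 + \frac{1}{\mu} \abs{u'(x)}^2.
\end{align*}
Since $V \in \mathrm{BV}(I)$ and $u, u' \in C^0(I)$, Riemann--Stieltjes integration by parts applied to $V\abs{u}^2$ together with the equation $-u'' = \lambda V u$ yields
\begin{align*}
	E(x) - E(y) = \int_y^x \abs{u}^2 \der V + 2 \Bigl( 1 - \frac{\lambda}{\mu} \Bigr) \int_y^x V \Re( u \bar u' ) \der s.
\end{align*}
The factor $1 - \lambda/\mu$ lies in $[0, 1]$ and vanishes whenever $\lambda \geq 1$; where it is nonzero one has $\mu = 1$, so that $V \abs{u'}^2 \leq \norm{V}_\infty E$ in this regime. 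Combined with the trivial bound $\abs{u}^2 \leq E/\essinf V$, this produces the uniform-in-$\lambda$ estimate
\begin{align*}
	\abs{E(x) - E(y)} \leq \kappa \int_y^x E \der \sigma,
\end{align*}
where $\sigma$ is the sum of the total-variation measure of $V$ and Lebesgue measure on $I$ (both finite), and $\kappa = \kappa(V)$ is independent of $\lambda$.

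A Gronwall inequality for measures then yields the two-sided comparison
\begin{align*}
	M^{-1} E(y) \leq E(x) \leq M E(y) \qquad \text{for all } x, y \in I,
\end{align*}
with $M = \exp(\kappa \sigma(I))$ independent of $\lambda$. To bound $E$ at some point of $J$ I would fix a cutoff $\phi \in C_c^\infty(\operatorname{int} J)$ with $\phi \equiv 1$ on a subinterval $J' \subset J$ of positive length. Multiplying $-u'' = \lambda V u$ by $\phi^2 u$, integrating by parts, and absorbing the mixed term via Cauchy--Schwarz gives $\int \phi^2 (u')^2 \leq C(V,\phi)(1 + \lambda) \norm{u}_{L^2(J)}^2$. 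Because $\lambda/\mu \leq 1$ and $1/\mu \leq 1$, dividing by $\mu$ produces the uniform bound $\frac{1}{\mu} \int \phi^2 (u')^2 \leq C' \norm{u}_{L^2(J)}^2$, and consequently $\int_{J'} E \leq C'' \norm{u}_{L^2(J)}^2$. The mean value theorem then furnishes $y \in J'$ with $E(y) \leq C''\abs{J'}^{-1} \norm{u}_{L^2(J)}^2$, and combining with the energy comparison and the trivial bound $\essinf V \cdot \abs{u(x)}^2 \leq E(x)$ gives $\norm{u}_{L^\infty(I)}^2 \leq C''' \norm{u}_{L^2(J)}^2$, which is \eqref{eq:loc:uniform_ef_bound}.

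The main obstacle is designing a single energy that behaves well for all $\lambda \geq 0$ simultaneously: the weight $1/\mu$ on $\abs{u'}^2$ is essential in the regime $\lambda \to \infty$ so that the cutoff estimate continues to control the $\abs{u'}^2$ contribution to $E$, while requiring $\mu \geq 1$ is equally essential in the low-$\lambda$ regime in order to keep the drift term $2(1-\lambda/\mu) V \Re(u \bar u')$ in the energy derivative bounded uniformly in $\lambda$. A secondary technical difficulty is that $V$ is only of bounded variation, so the energy identity must be expressed through Riemann--Stieltjes integrals and the propagation of $E$ across $I$ requires a Gronwall-type inequality for measures rather than its classical differentiable version.
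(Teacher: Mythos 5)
Your argument is correct, and it takes a genuinely different route from the paper's proof. The paper first reduces to large $\lambda$ via a continuity-in-$\lambda$ argument for the optimal constant, then performs a Pr\"ufer/Liouville-type rotation $\phi(x)=R(\mu t(x))\bigl(u,\tfrac{1}{\mu\sqrt{V}}u'\bigr)^\top$ with $\mu=\sqrt{\lambda}$, $t(x)=\int_a^x\sqrt{V}\,\mathrm{d}s$, shows $\abs{\phi}_2$ is nearly constant by a Gr\"onwall inequality with respect to the variation measure of $1/\sqrt{V}$, and finally recovers the $L^2(J)$-lower bound through an oscillation/equidistribution computation (a partition of $J$ adapted to the variation measure, integration by parts producing $\mathcal{O}(1/\mu)$ errors) -- this last step is the delicate one and works only for large $\lambda$. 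You instead work with the unrotated energy $E=V\abs{u}^2+\mu^{-1}\abs{u'}^2$, $\mu=\max(1,\lambda)$: the Stieltjes product rule gives exactly your identity, the drift term $2(1-\lambda/\mu)V\Re(u\bar u')$ vanishes for $\lambda\geq 1$ and is trivially dominated by $E$ for $\lambda<1$, so $\abs{E(x)-E(y)}\leq\kappa\int E\,\mathrm{d}\sigma$ with $\sigma$ the variation measure of $V$ plus Lebesgue measure, and a measure Gr\"onwall inequality (applied forward and, after reflecting the interval, backward) gives the two-sided comparison; the control of $E$ on $J'$ then comes from a Caccioppoli estimate whose constant involves $(1+\lambda)/\mu\leq 2$ and is uniform. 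This buys you a single argument valid for all $\lambda\geq 0$, dispensing with both the paper's separate treatment of bounded $\lambda$ and the equidistribution step; what the rotation buys the paper in exchange is the finer asymptotic information $\int_J\abs{u}^2\approx\tfrac{\abs{J}}{2}\abs{\phi}_2^2$ as $\lambda\to\infty$, which is not needed for the statement itself. Two small points you should make explicit when writing this up: fix a pointwise representative of $V$ with $V(x)\geq\essinf_I V$ everywhere (so that $\abs{u}^2\leq E/\essinf_I V$ holds pointwise and the Stieltjes product rule has unambiguous endpoint conventions), and note that the Gr\"onwall lemma of \cref{lem:gronwall} is applicable because one may take as comparison function the continuous $g=\abs{u}^2+2(1-\lambda/\mu)\norm{V}_\infty\abs{u}\abs{u'}\leq C E$, even though $E$ itself may jump where $V$ does.
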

\begin{proof}
	\textit{Part 1:} 
	For fixed $\lambda \in [0, \infty)$, inequality \eqref{eq:loc:uniform_ef_bound} holds since the space of solutions to \eqref{eq:evp_bounded} has dimension $2 < \infty$. As the space of solutions to \eqref{eq:evp_bounded} depends continuously on $\lambda$, so does the optimal constant $C$ in \eqref{eq:loc:uniform_ef_bound}. Therefore it suffices to show that $C$ can be bounded uniformly as $\lambda \to \infty$.

	\textit{Part 2:} 
	We use a change of coordinates similar to the Liouville transform, replacing $(u, u')$ by $(\phi_1, \phi_2)$. For this, let $a \coloneqq \inf I, \mu \coloneqq \sqrt{\lambda}$, and for $ x \in I$ set $t(x) \coloneqq \int_{a}^x \sqrt{V(s)} \der s$ as well as
	\begin{align*}
		\phi(x) \coloneqq
		\begin{pmatrix}
			\cos(\mu t(x)) & - \sin(\mu t(x))
			\\ \sin(\mu t(x)) & \cos(\mu t(x))
		\end{pmatrix} \begin{pmatrix}
			u(x) 
			\\ \frac{1}{\mu \sqrt{V(x)}} u'(x)
		\end{pmatrix},
	\end{align*}
	or equivalently
	\begin{align*}
		\begin{pmatrix}
			u(x) \\ u'(x)
		\end{pmatrix}
		= 
		\begin{pmatrix}
			\cos(\mu t(x)) & \sin(\mu t(x))
			\\ - \mu \sqrt{V(x)} \sin(\mu t(x)) & \mu \sqrt{V(x)} \cos(\mu t(x))
		\end{pmatrix} \phi(x). 
	\end{align*}
	We assume w.l.o.g. that $I, J$ are open.

	Let $x, y \in I$ with $y \geq x$. We calculate
	\begin{align*}
		\phi(y) - \phi(x) 
		&= \left[ \frac{1}{\mu \sqrt{V(s)}} u'(s) \begin{pmatrix}
			- \sin(\mu t(s)) \\ \cos(\mu t(s))
		\end{pmatrix} + u(s) \begin{pmatrix}
			\cos(\mu t(s)) \\ \sin(\mu t(s))
		\end{pmatrix} \right]_x^y
		\\ &= \int_x^y u'(s) \begin{pmatrix}
			- \sin(\mu t(s)) \\ \cos(\mu t(s))
		\end{pmatrix} \der \bigl( \tfrac{1}{\mu \sqrt{V(s)}} \bigr)
		\\ &\qquad+ \int_x^y 
		\frac{u''(s)}{\mu \sqrt{V(s)}} \begin{pmatrix}
			- \sin(\mu t(s)) \\ \cos(\mu t(s))
		\end{pmatrix} + u'(s) \begin{pmatrix}
			- \cos(\mu t(s)) \\ -\sin(\mu t(s))
		\end{pmatrix}\der s
		\\ &\qquad+ \int_x^y u'(s) \begin{pmatrix}
			\cos(\mu t(s)) \\ \sin(\mu t(s))
		\end{pmatrix} + \mu \sqrt{V(s)} u(s) \begin{pmatrix}
			- \sin(\mu t(s)) \\ \cos(\mu t(s))
		\end{pmatrix} \der s
		\\ &= \int_x^y \frac{u'(s)}{\mu} \begin{pmatrix}
			- \sin(\mu t(s)) \\ \cos(\mu t(s))
		\end{pmatrix} \der \bigl( \tfrac{1}{\sqrt{V(s)}} \bigr).
	\end{align*}
	 Applying the triangle inequality to the above integral we obtain
	\begin{align} \label{eq:fuer_gronwall}
		\abs{\phi(y)}_2
		\leq \abs{\phi(x)}_2 
		+ \int_{[x, y]} \frac{\abs{u'(s)}}{\mu} \der \nu
	\end{align}
	where $\nu$ is the total variation of the Lebesgue-Stieltjes measure associated to $\frac{1}{\sqrt{V}}$. Note that $\nu$ is finite with $\nu(I) \leq \Var(\frac{1}{\sqrt{V}}, I)$. Since $u'$ is continuous and
	\begin{align}\label{eq:gronwall_derivative_estimate}
		\frac{\abs{u'(s)}}{\mu} 
		= \sqrt{V(s)} \abs{
			\begin{pmatrix}
				- \sin(\mu t(s)) \\ \cos(\mu t(s))
			\end{pmatrix} \cdot \phi(s)
		}
		\leq \norm{V}_\infty^{\nicefrac12} \abs{\phi(s)}_2
	\end{align}
	holds for $s \in I$, we can apply the Grönwall inequality from \cref{lem:gronwall} to \eqref{eq:fuer_gronwall} and obtain
	\begin{align*}
		\abs{\phi(y)}_2 
		\leq \abs{\phi(x)}_2 \exp \left(\norm{V}_\infty^{\nicefrac12} \nu([x, y])\right)
	\end{align*}
	for $y \geq x$. Setting $E \coloneqq \exp \left(\norm{V}_\infty^{\nicefrac12} \nu(I)\right)$, we have in particular
	\begin{align}\label{eq:phi_sim_const}
		\abs{\phi(y)}_2
		\leq E \abs{\phi(x)}_2
	\end{align}
	for $y \geq x$. A similar argument shows that \eqref{eq:phi_sim_const} also holds for $y \leq x$.
	
	\textit{Part 3:} Let us now estimate the $L^\infty$ and $L^2$-norm. For this, choose some $\xi \in I$. First, we have
	\begin{align*}
		\norm{u}_{L^\infty(I)}
		\leq \max_{x \in I} \abs{\phi(x)}_2
		\leq E \abs{\phi(\xi)}_2.
	\end{align*}
    Now consider the $L^2$-norm. For $\eps > 0$, define an increasing sequence of points $x_n \in \overline{J}$ by 
	$x_0 \coloneqq \inf J$ and $x_{n} \coloneqq \sup\{x \in J \colon \nu((x_{n-1}, x)) \leq \eps\}$ for $n \in \N$. 
	If $x_n < \sup J$ holds for fixed $n \in \N$, we have $\nu((x_{n-1}, x_{n})) \leq \eps \leq \nu((x_{n-1}, x_{n}])$ and therefore
	\begin{align*}
		\nu(I) \geq \sum_{j=1}^n \nu((x_{j-1}, x_j]) \geq n \eps.
	\end{align*}
	Hence the above iteration terminates after $N \leq \ceil{\frac{\nu(I)}{\eps}}$ steps, and thus yields a partition $\inf J = x_0 < x_1 < \dots < x_N = \sup J$ of $J$ with $\nu((x_{n-1}, x_n)) \leq \eps$ for all $n$. For arbitrary $\xi_n \in (x_{n-1}, x_n)$ we calculate
	\begin{align*}
		\MoveEqLeft \int_J \abs{u(s)}^2 \der s
		\\ &= \sum_{n=1}^N \int_{x_{n-1}}^{x_n} \abs{\begin{pmatrix}
			\cos(\mu t(s)) \\ \sin(\mu t(s))
		\end{pmatrix} \cdot \phi(s)}^2 \der s
		\\ &\geq \sum_{n=1}^N \int_{x_{n-1}}^{x_n} \abs{\begin{pmatrix}
			\cos(\mu t(s)) \\ \sin(\mu t(s))
		\end{pmatrix} \cdot \phi(\xi_n)}^2 \der s
		-\sum_{n=1}^N \int_{x_{n-1}}^{x_n} \left(\abs{\phi(s)}_2 + \abs{\phi(\xi_n)}_2\right) \abs{\phi(s) - \phi(\xi_n)}_2 \der s.
	\end{align*}
	We estimate the terms separately. For $n \in \set{1, \dots, N}$, using integration by parts we have
	\begin{align*}
		\MoveEqLeft \int_{x_{n-1}}^{x_n} \abs{\begin{pmatrix}
			\cos(\mu t(s)) \\ \sin(\mu t(s))
		\end{pmatrix} \cdot \phi(\xi_n)}^2 - \tfrac12 \abs{\phi(\xi_n)}_2^2 \der s
		\\ &= \frac{1}{4 \mu} \left[ \frac{1}{\sqrt{V(s)}} \left( \sin(2 \mu t(s)) \left( \abs{\phi_1(\xi_n)}^2 - \abs{\phi_2(\xi_n)}^2\right)+ 4 \sin(\mu t(s))^2 \Re[\phi_1(\xi_n) \overline{\phi_2(\xi_n)}] \right) \right]_{x_{n-1}}^{x_n}
		\\ &\quad- \frac{1}{4 \mu}\int_{x_{n-1}}^{x_n} \left( \sin(2 \mu t(s)) \left( \abs{\phi_1(\xi_n)}^2 - \abs{\phi_2(\xi_n)}^2\right)+ 4 \sin(\mu t(s))^2 \Re[\phi_1(\xi_n) \overline{\phi_2(\xi_n)}] \right) \der \bigl( \tfrac{1}{\sqrt{V(s)}} \bigr).
	\end{align*}
	This allows us to estimate
	\begin{align*}
		\int_{x_{n-1}}^{x_n} \abs{\begin{pmatrix}
			\cos(\mu t(s)) \\ \sin(\mu t(s))
		\end{pmatrix} \cdot \phi(\xi_n)}^2
		&\geq \frac{x_n - x_{n-1}}{2} \abs{\phi(\xi_n)}_2^2
		- \frac{3 \abs{\phi(\xi_n)}_2^2}{4 \mu} \left( 2 \norm{\tfrac{1}{\sqrt{V}}}_\infty +  \nu(J) \right)
		\\ &\geq \frac{x_n - x_{n-1}}{2 E^2} \abs{\phi(\xi)}_2^2
		- \frac{3 E^2 \abs{\phi(\xi)}_2^2}{4 \mu} \left( 2 \norm{\tfrac{1}{\sqrt{V}}}_\infty +  \nu(J)  \right).
	\end{align*}
	For the second set of terms, we use \eqref{eq:gronwall_derivative_estimate} to estimate
	\begin{align*}
		\MoveEqLeft \int_{x_{n-1}}^{x_n} \left(\abs{\phi(s)}_2 + \abs{\phi(\xi_n)}_2\right) \abs{\phi(s) - \phi(\xi_n)}_2 \der s
		\\ &\leq \int_{x_{n-1}}^{x_n} \norm{\phi}_\infty\abs{\int_{\xi_n}^s \frac{u'(s)}{\mu} \begin{pmatrix} - \sin(\mu t(s)) \\ \cos(\mu t(s)) \end{pmatrix} \der \bigl( \tfrac{1}{\sqrt{V(s)}} \bigr)}_2 \der s
		\\ &\leq 2 \abs{\phi(\xi)}_2 (x_n - x_{n-1})  \norm{V}_\infty^{\nicefrac12} \norm{\phi}_\infty^2 \nu((x_{n-1}, x_n))
		\\ &\leq 2 \eps \norm{V}_\infty^{\nicefrac12} E^2 \abs{\phi(\xi)}^2 (x_n - x_{n-1}).
	\end{align*}
	Summing up all estimates over $n$, we get
	\begin{align*}
		\int_I \abs{u(s)}^2 \der s
		\geq \left( \frac{\abs{J}}{2 E^2} - E^2 \left( \frac{3 N}{4 \mu} \left( 2 \norm{\tfrac{1}{\sqrt{V}}}_\infty + \nu(J)\right) + 2 \eps \abs{J} \norm{V}_\infty^{\nicefrac12} \right) \right) \abs{\phi(\xi)}_2^2
	\end{align*}
	The partition of $J$ (and therefore $N$) depends on $\eps$ but not on $\mu = \sqrt{\lambda}$. Therefore, choosing $\eps$ sufficiently small, the constant appearing above is positive for large $\lambda$. This shows
	\begin{align*}
		\norm{u}_{L^\infty(I)} \lesssim \abs{\phi(\xi)}_2 \lesssim \norm{u}_{L^2(I)}.
		&\qedhere
	\end{align*}
	for large $\lambda$, completing the proof.
\end{proof}

\begin{lemma} \label{lem:gronwall}
	Let $I$ be an interval, $f, g \colon I \to [0, \infty)$ be maps, $\nu$ be a locally finite Borel measure on $I$, and $C > 0$. Assume that $g$ is continuous with $g \leq C f$ and
	\begin{align*}
		f(y) \leq f(x) + \int_{[x, y]} g \der \nu
	\end{align*}
	holds for all $y \geq x$. Then we have 
	\begin{align*}
		f(y) \leq f(x) \exp\left(C \nu([x, y])\right)
	\end{align*}
	for all $y \geq x$.
\end{lemma}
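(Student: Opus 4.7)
The plan is to replace $f$ with the auxiliary function
\[
\psi(y) \coloneqq f(x) + \int_{[x,y]} g \der \nu, \qquad y \geq x,
\]
which is right-continuous and nondecreasing, and finite on compacts since $g$ is continuous and $\nu$ is locally finite. By the standing hypothesis we have $f(y) \leq \psi(y)$, so it suffices to prove $\psi(y) \leq f(x) \exp(C \nu([x,y]))$. The function $\psi$ jumps only at atoms of $\nu$ and is otherwise continuous, so the strategy is to bound jump contributions and continuous growth separately by the corresponding exponential factors and then combine them multiplicatively.

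For the jumps, I would fix an atom $z > x$ of $\nu$ with mass $m \coloneqq \nu(\{z\}) > 0$, so that $\psi(z) - \psi(z-) = g(z) m$. The crucial point is that continuity of $g$ allows us to evaluate $g(z)$ as a left limit: for $t < z$ we have $g(t) \leq C f(t) \leq C \psi(t) \leq C \psi(z-)$ by monotonicity of $\psi$, hence $g(z) \leq C \psi(z-)$. This yields
\[
\psi(z) = \psi(z-) + g(z) m \leq \psi(z-)(1 + Cm) \leq \psi(z-) \exp(Cm).
\]
On a maximal open interval $(a,b) \subseteq (x,y]$ containing no atom of $\nu$, the function $\psi$ is continuous and satisfies $\psi(y_2) - \psi(y_1) = \int_{(y_1,y_2]} g \der \nu_c \leq C \int_{(y_1,y_2]} \psi \der \nu_c$ for $a < y_1 \leq y_2 < b$, where $\nu_c$ is the non-atomic part of $\nu$. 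The classical Grönwall inequality then gives $\psi(y_2) \leq \psi(y_1) \exp(C \nu_c((y_1,y_2]))$.

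Combining these two ingredients, I would enumerate the (at most countably many) atoms of $\nu$ in $(x,y]$ and chain together the jump bounds with the continuous bounds between consecutive atoms; the exponents add telescopically to yield
\[
\psi(y) \leq \psi(x) \exp\bigl(C \nu((x,y])\bigr).
\]
A possible atom at $x$ is absorbed via $\psi(x) = f(x) + g(x) \nu(\{x\}) \leq f(x)(1 + C \nu(\{x\})) \leq f(x) \exp(C \nu(\{x\}))$, and multiplying gives the desired $\psi(y) \leq f(x) \exp(C \nu([x,y]))$.

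The main obstacle I expect is the jump analysis. The naive bound $g(z) \leq C \psi(z)$ (using only $f \leq \psi$) leads to $\psi(z)(1 - Cm) \leq \psi(z-)$, hence to the weaker (and, for $Cm \geq 1$, useless) estimate $\psi(z) \leq \psi(z-)/(1 - Cm)$, which strictly exceeds $\psi(z-) \exp(Cm)$. The crucial and not entirely obvious point is that the continuity assumption on $g$, which at first glance merely ensures integrability of $g$ against $\nu$, actually lets us evaluate $g(z)$ from the left and control the atomic jump of $\psi$ sharply. This is what converts the hypothesis $g \leq Cf$ into a clean exponential bound regardless of the atomic structure of $\nu$, which is essential for the application in \cref{prop:uniform_ef_bound} where $\nu$ stems from the Lebesgue--Stieltjes measure of an only $BV$ function.
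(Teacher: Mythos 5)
Your jump analysis is correct and insightful: continuity of $g$ lets you evaluate $g(z)$ as the left limit $\lim_{t\to z-}g(t)\leq C\psi(z-)$, which gives the sharp factor $\exp(Cm)$ at an atom of mass $m$; the naive bound $g(z)\leq C\psi(z)$ would indeed fail for $Cm\geq 1$. However, the chaining step has a genuine gap. You propose to ``enumerate the atoms of $\nu$ in $(x,y]$ and chain together the jump bounds with the continuous bounds between consecutive atoms,'' but a locally finite Borel measure can have a dense countable set of atoms (e.g.\ $\sum_n 2^{-n}\delta_{q_n}$ over the rationals), in which case there are no consecutive atoms and no maximal atom-free open intervals of positive length. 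The decomposition of $[x,y]$ into atom-free intervals separated by atoms simply does not exist, so the telescoping argument cannot be carried out as stated, and it is not obvious how to close this without essentially rewriting the proof.

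The paper avoids this entirely by not separating atoms from the continuous part. It fixes $\eps>0$, uses uniform continuity of $g$ on $[x,y]$ to choose a mesh size $\delta$ with $|g(a)-g(b)|\leq\eps$ whenever $|a-b|\leq\delta$, and picks a finite partition $x=x_0<\dots<x_n=y$ with $|x_m-x_{m-1}|\leq\delta$ and $\nu(\{x_m\})=0$ for interior $m$ (possible because $\nu$ has only countably many atoms). On each subinterval $[x_{m-1},x_m]$, regardless of what atoms it contains, one bounds $g\leq g(x_{m-1})+\eps\leq Cf(x_{m-1})+\eps$, yielding $f(x_m)\leq f(x_{m-1})(1+C\nu([x_{m-1},x_m]))+\eps\,\nu([x_{m-1},x_m])$. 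Iterating over the finite partition, using $1+t\leq e^t$, and letting $\eps\to 0$ gives the result. This sidesteps the ordering problem by using a finite partition chosen by spatial resolution, not by the atomic structure of $\nu$, and replaces your ``evaluate $g$ from the left at an atom'' idea with ``evaluate $g$ at the left endpoint of a short interval, up to $\eps$.'' Both ideas rest on continuity of $g$, but only the latter yields a finite chain.
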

\begin{proof}
	Fix $y \geq x$ and $\eps > 0$. As $g$ is uniformly continuous on $[x, y]$, we find $\delta > 0$ such that $\abs{a - b} \leq \delta$ implies $\abs{g(a) - g(b)} \leq \eps$ for $a, b \in [x, y]$. Further choose a partition $x = x_0 < x_1 < \dots < x_n = y$ such that $\abs{x_{m} - x_{m-1}} \leq \delta$ for $m \in \set{1, \dots, n}$ and $\nu(\set{x_m}) = 0$ for $m \in \set{1, \dots, n - 1}$. We then calculate
	\begin{align*}
		f(x_m) 
		&\leq f(x_{m-1}) + \int_{[x_{m-1}, x_m]} g \der \nu
		\\ &\leq f(x_{m-1}) + (g(x_{m-1}) + \eps) \nu([x_{m-1}, x_m])
		\\ &\leq f(x_{m-1}) (1 + C \nu([x_{m-1}, x_m])) + \eps \nu([x_{m-1}, x_m]).
	\end{align*}
	Inserting this inequality into itself for $m = n, \dots, 1$, we obtain
	\begin{align*}
		f(y) 
		&\leq f(x) \prod_{m=1}^{n} (1 + C \nu([x_{m-1}, x_m]))
		+ \eps \sum_{m=1}^n \nu([x_{m-1}, x_m]) \prod_{j=m+1}^{n} (1 + C \nu([x_{j-1}, x_j]))
		\\ &\leq f(x) \exp(C \nu([x, y])) + \eps \nu([x, y]) \exp(C \nu([x, y]))
	\end{align*}
	and the claim follows by letting $\eps \to 0$.
\end{proof}

\begin{remark}
    A similar, more general result, which does not require continuity of $g$ but uses half-open integration intervals can be found in \cite[Theorem~5.1]{groenwall}.
\end{remark}


\section{Examples} \label{sec:examples}

We analyze the spectrum of $L = - \tfrac{1}{V(x)} \tfrac{d^2}{dx^2}$ in more detail and present examples of $V$ such that our assumptions \ref{ass:first}--\ref{ass:last} hold.
First we observe a qualitative result for the spectrum of $L$. 

\begin{lemma}\label{lem:spec}
    \begin{enumerate}
        \item $\sigma_\ess(L)= \sigma_\ess(- \tfrac{1}{V^-_\per(x)} \tfrac{d^2}{dx^2}) \cup \sigma_\ess( - \tfrac{1}{V^+_\per(x)} \tfrac{d^2}{dx^2})$ 
        \item The spectral bands $\sigma_\ess(L)$ consist of purely absolutely continuous spectrum of $L$ and edges of the spectral bands are no eigenvalues of $L$.
        \item Every gap of $\sigma_\ess(L)$ contains at most finitely many eigenvalues of $L$.
    \end{enumerate}
\end{lemma}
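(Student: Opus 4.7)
\textbf{Overall strategy.} The three parts rely on different ingredients. For (a) I will decouple $L$ via Glazman's trick into a direct sum whose essential spectrum is manifestly the union of the two periodic spectra. For (b) I will read off absolute continuity and the absence of eigenvalues at band edges from the explicit spectral representation developed in \cref{sec:embeddings}. For (c) I will count eigenvalues in a gap as zeros of an analytic Wronskian.

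\textbf{Essential spectrum, part (a).} Imposing additional Dirichlet conditions at $R^-$ and $R^+$ produces an operator $\widetilde L = L^- \oplus L^0 \oplus L^+$ acting as a direct sum on $L^2_V((-\infty, R^-]) \oplus L^2_V([R^-, R^+]) \oplus L^2_V([R^+, \infty))$. For $z$ in a common resolvent set, $(\widetilde L - z)^{-1} - (L - z)^{-1}$ is of finite rank (changing boundary conditions at two interior points is a rank-two perturbation of the resolvent), so by Weyl's theorem on stability of the essential spectrum we have $\sigma_\ess(\widetilde L) = \sigma_\ess(L)$. The middle summand $L^0$ is a regular Sturm-Liouville operator on a bounded interval, has compact resolvent and thus empty essential spectrum. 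The outer summands $L^\pm$ are Dirichlet half-line restrictions of the periodic operators $-\tfrac{1}{V_\per^\pm} \tfrac{d^2}{dx^2}$, whose essential spectra classically coincide with those of the full-line operators (e.g. via Floquet theory, using that $X^\pm$-(anti)periodic eigenfunctions on the period cell yield Weyl sequences). Taking the union yields (a).

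\textbf{Band interior and edges, part (b).} On the interior of any spectral band (a connected component of $\R \setminus S$ where $|\rho_+| = 1$ or $|\rho_-| = 1$), \cref{prop:spectral_density} gives a continuous density of $\mu$ with respect to Lebesgue measure; combined with the orthogonality of the point-spectrum part in \cref{thm:spectral_measure:explicit}, this yields purely absolutely continuous spectrum on the interior of each band. For a band edge $\lambda_0$, say of the $+$-family, we have $\rho_+(\lambda_0) \in \{-1, 1\}$ and $P^+(\lambda_0)$ is either $\pm \Id$ or a nontrivial Jordan block. In the first case every solution of $-u'' = \lambda_0 V u$ on $(R^+, \infty)$ is $X^+$-(anti)periodic, hence bounded but not in $L^2$; in the second case one solution is $X^+$-(anti)periodic and the linearly independent one grows linearly, again not in $L^2$. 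Either way no solution is square-integrable at $+\infty$, so $\lambda_0 \notin \sigma_p(L)$; the case of a $-$ band edge is symmetric, which concludes (b).

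\textbf{Finiteness in gaps, part (c).} On a gap $(a, b) \subseteq \R \setminus \sigma_\ess(L)$, \cref{lem:rho} gives real-analytic branches of $\rho_\pm$ with $|\rho_\pm| < 1$, and the corresponding $\phi_\pm(\,\cdot\,, \lambda)$ are real-analytic in $\lambda$ and span the one-dimensional $L^2$-solution spaces at $\pm\infty$. The Wronskian $w(\lambda) \coloneqq W(\phi_+(\,\cdot\,, \lambda), \phi_-(\,\cdot\,, \lambda))(0)$ is thus real-analytic on $(a, b)$ and vanishes precisely at eigenvalues of $L$ in the gap. Extending holomorphically to a neighborhood in $\H$, where $\phi_+$ and $\phi_-$ are linearly independent (as $L$ has no eigenvalues in $\H$), $w$ is not identically zero, so its zeros in any compact subset of $(a, b)$ are finite. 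The main obstacle is to rule out accumulation at the endpoints: near $\lambda_\star \in \{a, b\}$, $\rho_\pm$ has a square-root branch point, and with the substitution $\mu = \sqrt{\lambda - \lambda_\star}$ both $\rho_\pm$ and an appropriately chosen eigenvector $v_\pm$ become analytic functions of $\mu$ (this is where some care is required when $P^\pm(\lambda_\star)$ is a Jordan block, but the analyticity in $\mu$ persists). Hence $w$ is analytic in $\mu$ near $\mu = 0$; being not identically zero, it vanishes to finite order, so only finitely many zeros can accumulate at $\lambda_\star$. Since $(a,b)$ has only two such endpoints, $L$ has at most finitely many eigenvalues in the gap.
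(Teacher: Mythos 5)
Your proof is correct in substance but takes a more self-contained route than the paper, which argues almost entirely by citation: the paper decomposes at the single point $0$, invokes Theorem~2.1 of \cite{behrndt_smitz_teschl_trunk} to identify the essential spectra of the perturbed and purely periodic half-line operators, shows the resolvent difference of $L$ and $L^-\oplus L^+$ has rank at most two via the limit-point property (\cref{thm:limit_point}), and then quotes \cite{behrndt_smitz_teschl_trunk} for (b) and for finiteness in gaps on the half-lines, combined with the Birman--Solomjak eigenvalue-counting result \cite{birman_solomjak} for finite-rank resolvent perturbations to get (c). You instead decouple at $R^\pm$ so that the outer pieces are exactly periodic (thereby avoiding the half-line comparison theorem), derive (b) from the paper's own spectral-measure machinery (\cref{prop:spectral_density}, \cref{thm:spectral_measure:explicit}) together with a direct Floquet argument at band edges, and prove (c) by an analytic-Wronskian zero count with a square-root desingularization at gap edges. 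This buys independence from the cited literature, at the price of a few steps that need more care than you give them: (i) your parenthetical Weyl-sequence justification that the half-line periodic Dirichlet operators have the full periodic bands as essential spectrum is imprecise --- $X^\pm$-(anti)periodic cell eigenfunctions only reach the band edges; use Bloch waves of arbitrary quasimomentum, or again a finite-rank decoupling from the full-line periodic operator; (ii) for ``purely absolutely continuous in the bands'' you must also handle the discrete exceptional set $S$ inside the bands, since \cref{prop:spectral_density} only gives the density on $\R\setminus S$: your band-edge argument (no solution is square-integrable at the relevant infinity) applies verbatim at interior band points, so $\mu$ has no atoms there, and a countable set carries no continuous mass, which closes this; (iii) at a gap edge you should justify that the Wronskian is not identically zero after the substitution $\mu=\sqrt{\lambda-\lambda_\star}$ (the linear-independence argument via $\H$ does not directly transfer to the continued branch); one way is to note that otherwise every $\lambda$ in the gap near $\lambda_\star$ would be an eigenvalue, contradicting the separability of $L^2_V(\R)$. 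With these points filled in, your argument is complete and arguably more informative than the paper's citation-based proof.
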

\begin{proof} W.l.o.g. assume $R^- < 0 < R^+$.
    We follow the ideas of \cite{behrndt_smitz_teschl_trunk} and introduce the following notation: Let $L^+_\per$ be a self-adjoint realization of $ - \tfrac{1}{V^+_\per(x)} \tfrac{d^2}{dx^2}$ in $L^2((0,+ \infty); V^+_\per)$ and let $L^-_\per$ be a self-adjoint realization of $ - \tfrac{1}{V^-_\per(x)} \tfrac{d^2}{dx^2}$ in $L^2((- \infty,0); V^-_\per)$.		
    We denote by $L^+$ a self-adjoint realization of $- \tfrac{1}{V(x)} \tfrac{d^2}{dx^2}|_{(0,+\infty)} $ in $L^2((0,+ \infty); V)$  and by $L^-$ a self-adjoint realization of $- \tfrac{1}{V(x)} \tfrac{d^2}{dx^2} |_{(-\infty,0)} $ in $L^2((- \infty,0); V)$.
    
\begin{enumerate}
    \item By Theorem 2.1 in \cite{behrndt_smitz_teschl_trunk} we have $\sigma_\ess( L^\pm_\per)=\sigma_\ess( L^\pm)$ and $L^\pm$ are bounded from below.  
    Next we show that the resolvent difference of $L$ and $L^- \oplus L^+$ is an operator of rank at most two. Since $L^+, L^-$ and $L$ are bounded from below, resolvent operators exist for some constant $\kappa>0$ large enough and we can write
    \begin{align*}
        ((L + \kappa \id)^{-1} - (L^- \oplus L^+ + \kappa \id)^{-1})f=w .
    \end{align*}
    We set $(L + \kappa \id)u=f$ and $(L^- \oplus L^+ + \kappa \id)v=f$ and hence $w=u-v$.
    For the restriction on $(0,\infty)$ we have $(L + \kappa \id) w|_{(0,\infty)}=0$, therefore $w |_{(0,\infty)}$ lies in the kernel of $(L + \kappa \id)|_{(0,\infty)}$ which has dimension at most one since $L+\kappa I$ is of limit-point type at $+\infty$, cf. Theorem~\ref{thm:limit_point}. The same holds true for the restriction on $(-\infty,0)$.
    By Corollary~11.2.3 in \cite{davies} we obtain $\sigma_\ess(L)=\sigma_\ess(L^- \oplus L^+) = \sigma_\ess(L^-_\per) \cup \sigma_\ess( L^+_\per)$.

	\item See the proof of Theorems~1.1-1-3 in \cite{behrndt_smitz_teschl_trunk}.

	\item By Theorem 2.3 from \cite{behrndt_smitz_teschl_trunk} we know that every gap of $\sigma_\ess(L^\pm)$ contains at most finitely many eigenvalues of $L^\pm$ and therefore also every gap of $\sigma_\ess(L^- \oplus L^+)$ contains at most finitely many eigenvalues of $L^- \oplus L^+$. Since the resolvent difference of $L$ and $L^- \oplus L^+$ is an operator of rank at most two we conclude by Theorem 3, Chapter 9.3 in \cite{birman_solomjak} that in each gap of $\sigma_\ess(L)$ the operator $L$ gains at most two more eigenvalues compared to the finitely many eigenvalues of $L^-\oplus L^+$ in each gap of $\sigma_\ess(L^-\oplus L^+)$.  
	\qedhere
\end{enumerate}
\end{proof}

\subsection{Purely periodic case}
As potential $V$ we consider a positive periodic step function $V_\per$ given as follows:
take a partition $0=\theta_0<\theta_1<\ldots<\theta_N=1$ of the interval $[0,1]$ and positive values $a_1,\ldots, a_N>0$ to define 
\begin{align}\label{eq:multistepV}
    V_\per(x) = a_i \mbox{ for } x\in [\theta_{i-1}X, \theta_i X) \mbox{ and } i=1,\ldots, N
\end{align}
and extend $V_\per$ periodically to the real line with period $X$. First, we note that our assumptions \ref{ass:bounded} and \ref{ass:periodic_infinity} are satisfied by definition of $V_\per$. Moreover, by Theorem 5.3.1 in \cite{Eastham}, $\sigma_p(L)=\emptyset$ and hence \ref{ass:point_spectrum} is fulfilled.

\begin{lemma}\label{lem:multistepcond} Let $q_i \coloneqq \sqrt{a_i}(\theta_{i}-\theta_{i-1})X$ for $i=1,\ldots, N$.
    Assume that there is $T>0$ such that 
    $$
    4q_i \in T\N, \quad i=1,\ldots,N
    $$ 
    and suppose that 
    $$
    4 q_{i_j}\in T\Nodd
    $$
    is satisfied for an even number of indices $1\leq i_1<i_2<\ldots<i_{2m}\leq N$ and no others. If moreover
    \begin{align} \label{eq:quotient}
    \alpha := \frac{a_{i_1} a_{i_3}\cdot \ldots \cdot a_{i_{2m-1}}}{a_{i_2} a_{i_4} \cdot \ldots \cdot a_{i_{2m}}} \not =1
   \end{align}
    then \ref{ass:spectrum} is satisfied for $\omega = \frac{2 \pi}{T}$ and $V = V_\per$ from above.
\end{lemma}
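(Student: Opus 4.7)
The plan is to analyze the monodromy matrix of $L$ explicitly and invoke Floquet theory. Since $V_\per$ is $X$-periodic, the spectrum is characterized as $\sigma(L) = \{\lambda \geq 0 \colon |\tr M(\sqrt{\lambda})| \leq 2\}$, where $M(\mu)$ is the transfer matrix over one period for the equation $-u'' = \mu^2 V_\per(x) u$. On the sub-interval where $V_\per \equiv a_i$, the transfer matrix reads
\begin{align*}
    M_i(\mu) = \begin{pmatrix} \cos(\mu q_i) & (\mu \sqrt{a_i})^{-1} \sin(\mu q_i) \\ -\mu \sqrt{a_i} \sin(\mu q_i) & \cos(\mu q_i) \end{pmatrix},
\end{align*}
and $M(\mu) = M_N(\mu) \cdots M_1(\mu)$. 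Conjugating by $D = \operatorname{diag}(1, \mu)$ removes the explicit $\mu$-dependence in the sense that $\tilde M_i(\mu) \coloneqq D^{-1} M_i(\mu) D$ depends on $\mu$ only through the phase $\eta_i = \mu q_i$, while $\tr M(\mu) = \tr(\tilde M_N(\mu) \cdots \tilde M_1(\mu))$ is unaffected by the conjugation.

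Evaluating at $\mu = k\omega$ with $k \in \Nodd$, the relation $q_i = T n_i / 4$ together with $\omega = 2\pi/T$ gives $\eta_i = \pi k n_i / 2$. For indices with $n_i$ even one has $\cos(\eta_i) = \pm 1$, $\sin(\eta_i) = 0$, so $\tilde M_i(k\omega) = \pm I$. For the $2m$ indices $i_1, \ldots, i_{2m}$ with $n_i$ odd one instead obtains $\cos(\eta_i) = 0$, $\sin(\eta_i) = \pm 1$, so
\begin{align*}
    \tilde M_{i_j}(k\omega) = \pm \begin{pmatrix} 0 & a_{i_j}^{-1/2} \\ -a_{i_j}^{1/2} & 0 \end{pmatrix}.
\end{align*}
The $\pm I$ factors commute and contribute only a global sign. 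A direct pairwise computation shows that the product of the $2m$ antidiagonal factors (taken in their monodromy order) is diagonal with entries $\pm \sqrt{\alpha}$ and $\pm 1/\sqrt{\alpha}$, with $\alpha$ as in \eqref{eq:quotient}. Therefore $|\tr M(k\omega)| = \sqrt{\alpha} + 1/\sqrt{\alpha}$, which by the AM--GM inequality is strictly greater than $2$ exactly when $\alpha \neq 1$. This already gives $(k\omega)^2 \notin \sigma(L)$ for every $k \in \Nodd$.

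To upgrade this pointwise fact to the uniform separation required in \ref{ass:spectrum}, I expand near $\mu = k\omega + \xi$. Writing $k = 2j + 1$ and $n_i = 2 s_i + 1$ for the odd indices, elementary trigonometry produces the factorization
\begin{align*}
    \tilde M_i(k\omega + \xi) = \sigma_i \, A_i(\xi), \qquad \sigma_i \in \{-1, +1\},
\end{align*}
in which $A_i(\xi)$ depends only on $\xi$ and $a_i$; explicitly $\sigma_i = (-1)^{n_i/2}$ when $n_i$ is even and $\sigma_i = (-1)^{j + s_i}$ when $n_i$ is odd. Because the number $2m$ of odd-index factors is even, the $k$-dependent contributions combine into $(-1)^{2 m j} = 1$, so $\sigma_0 \coloneqq \prod_i \sigma_i$ is independent of the choice of odd $k$. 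Consequently
\begin{align*}
    D(k\omega + \xi) \coloneqq \tr M(k\omega + \xi) = \sigma_0 \, f(\xi),
    \qquad
    f(\xi) \coloneqq \tr\bigl(A_N(\xi) \cdots A_1(\xi)\bigr),
\end{align*}
where $f$ is continuous and does not depend on the particular odd $k$. Since $|f(0)| = \sqrt{\alpha} + 1/\sqrt{\alpha} > 2$, continuity of $f$ yields $\delta > 0$ with $|f(\xi)| > 2$ on $|\xi| < \delta$, and thus $|D(\mu)| > 2$ holds on the $\delta$-neighborhood of $\omega \Nodd$. As $\sqrt{\sigma(L)} = \{\mu \geq 0 \colon |D(\mu)| \leq 2\}$, this is precisely the content of \ref{ass:spectrum}.

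The main technical step is the $k$-uniform factorization $D(k\omega + \xi) = \sigma_0 f(\xi)$, and the crucial structural input that makes it work is the evenness of the number $2m$ of odd-index factors, which renders the residual $k$-dependent sign $(-1)^{2 m j}$ trivial.
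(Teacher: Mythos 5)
Your proof is correct and follows essentially the same approach as the paper's: after conjugation your $\tilde M_i$ coincides with the paper's weighted propagation matrix $P_i(\lambda)$, the evaluation at $\mu = k\omega$ producing $\pm I$ and $\pm\left(\begin{smallmatrix}0 & a_i^{-1/2}\\ -a_i^{1/2} & 0\end{smallmatrix}\right)$ factors and yielding $|\tr| = \sqrt{\alpha}+1/\sqrt{\alpha}>2$ is the same algebraic computation. The paper obtains the $k$-uniform separation from the $4\omega$-periodicity of the trace in $\sqrt{\lambda}$, while you derive the equivalent fact through the local factorization $D(k\omega+\xi)=\sigma_0 f(\xi)$; these are the same observation in slightly different clothing, and (as a minor remark) the $k$-independence of the \emph{sign} $\sigma_0$ is not actually needed since $|D(k\omega+\xi)|=|f(\xi)|$ already suffices.
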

\begin{proof} We denote by $P_i(\lambda)$ the weighted monodromy matrix such that any solution of $-u'' = \lambda a_i u$ on $[\theta_{i-1}X, \theta_i X]$ satisfies
\begin{align} \label{eq:prop_matrix}
     \begin{pmatrix}
        \sqrt{\lambda}u(\theta_i X) \\ u'(\theta_i X)
    \end{pmatrix} = P_i(\lambda) \begin{pmatrix}
        \sqrt{\lambda} u(\theta_{i-1}X) \\ u'(\theta_{i-1}X) 
    \end{pmatrix} .
\end{align}
It is given by
\begin{align} \label{eq:propmatrix_2}
    P_i(\lambda)= \begin{pmatrix}
        \cos(\sqrt{\lambda} q_i) & \frac{1}{\sqrt{a_i}}\sin(\sqrt{\lambda} q_i) \\
         -\sqrt{a_i}\sin(\sqrt{\lambda} q_i) & \cos(\sqrt{\lambda} q_i)
    \end{pmatrix}
\end{align}
and as a function of $\sqrt{\lambda}$ it is $\frac{2\pi}{q_i}$-periodic. Since $q_i\in \frac{T}{4}\N$ it is in particular also $\frac{8\pi}{T}$-periodic. The weighted monodromy matrix $P(\lambda;V_\per)$ of the full problem $-u''= \lambda V_\per(x) u$ on $[0,X]$ is then given by 
$$
P(\lambda;V_\per) = P_{N}(\lambda)\cdot \ldots \cdot P_1(\lambda),
$$
which is $4\omega=\frac{8\pi}{T}$-periodic as a function of $\sqrt{\lambda}$. 
Following Chapter 1 and Section 2.1 in \cite{Eastham} we know that 
\begin{align*}
    \sigma\left(-\frac{1}{V_\per(x)} \dxsquare \right)= \left\{ \lambda \in \R \colon |\tr  P(\lambda;V_\per) |\leq 2 \right\}
\end{align*}
where we use that the weighted monodromy matrix $P(\lambda;V_\per)$ and the standard monodromy matrix are similar and therefore have the same eigenvalues and the same trace. In order to check that \ref{ass:spectrum} holds, the $4\omega$-periodicity w.r.t. $\sqrt{\lambda}$ implies that it suffices to check that $|\tr P(\omega^2;V_\per)|, |\tr P(9\omega^2;V_\per)|>2$ since $\omega, 3\omega$ are the only odd multiples of $\omega$ in the periodicity cell $[0,4\omega]$. If we insert $k^2\omega^2$, $k\in \Nodd$ into $P_i(\lambda)$ then the assumptions yield that 
$$
    P_i(k^2\omega^2)= \pm \begin{pmatrix}
        0 & \frac{1}{\sqrt{a_i}}\\
         -\sqrt{a_i} & 0
    \end{pmatrix} \mbox{ if } 4q_i \in T\Nodd\quad \mbox{ and } \quad P_i(k^2\omega^2)= \pm\Id \mbox{ if } 4q_i \in T\Neven.
$$
By definition of the indices $i_1, \dots, i_{2m}$ we have
$$
P_{i_{j+1}}(k^2\omega^2) P_{i_j}(k^2\omega^2)= \pm \begin{pmatrix} \sqrt{\frac{a_{i_j}}{a_{i_{j+1}}}} & 0 \\ 0 & \sqrt{\frac{a_{i_{j+1}}}{a_{i_j}}}\end{pmatrix}
$$
and obtain 
\begin{align} \label{eq:structure_prop_per}
P(k^2\omega^2;V_\per) = \pm \begin{pmatrix}
        \sqrt{\alpha} & 0\\
         0 & \frac{1}{\sqrt{\alpha}}
    \end{pmatrix} \quad 
\end{align}
so that $|\tr{P(k^2\omega^2;V_\per)}| >2$ by \eqref{eq:quotient}. Therefore assumption~\ref{ass:spectrum} holds. 
\end{proof}

\begin{lemma} \label{lem:sufficient}
	Let $q_i \coloneqq \sqrt{a_i} (\theta_i - \theta_{i-1}) X$ for $i = 1, \dots, N$ be pairwise rational multiples of one another with 
    greatest common divisor $q \coloneqq \gcd(q_1, \dots, q_N)$ defined as the largest positive number such that all $q_i$ are integer multiples of $q$. If $\frac{q_i}{q}\in \Nodd$ is satisfied for an even number of indices $1\leq i_1<i_2<\ldots<i_{2m}\leq N$ and no others, and additionally 
    \begin{align*}
		\frac{a_{i_1} a_{i_3}\cdot \ldots \cdot a_{i_{2m-1}}}{a_{i_2} a_{i_4} \cdot \ldots \cdot a_{i_{2m}}} \neq 1
	\end{align*}
	holds then the assumptions of Lemma~\ref{lem:multistepcond} hold with $T = 4 \frac{q}{k}$ for any $k \in \Nodd$.
\end{lemma}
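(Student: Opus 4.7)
The plan is to verify directly that the conditions of \cref{lem:multistepcond} are fulfilled for the specific choice $T = 4q/k$ with $k \in \Nodd$, where $q = \gcd(q_1,\dots,q_N)$. The proof will essentially be a bookkeeping exercise with divisibility and parities.

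First, I would make sure that $q$ is well-defined as a positive real. Since the $q_i$ are pairwise rational multiples, fixing $q_1$ and writing $q_i/q_1 = p_i/s_i$ in lowest terms with $p_i,s_i\in\N$, the number $q \coloneqq q_1/\lcm(s_1,\dots,s_N)$ has the property that $n_i \coloneqq q_i/q \in \N$ for all $i$, and $\gcd(n_1,\dots,n_N)=1$ by construction; this is the unique largest real with this property and agrees with the $\gcd$ used in the statement.

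Next, with $T = 4q/k$ I would check the three hypotheses of \cref{lem:multistepcond} in order. For the first, compute
\begin{align*}
    \frac{4q_i}{T} = k \cdot \frac{q_i}{q} = k \, n_i \in \N,
\end{align*}
so $4q_i \in T\N$ for every $i$. For the second, observe that $k\, n_i$ is odd if and only if both $k$ and $n_i$ are odd; since $k\in\Nodd$ by assumption, this reduces to $n_i\in\Nodd$, which by hypothesis holds for exactly the even set of indices $i_1<\dots<i_{2m}$. Hence the indices for which $4q_{i_j}\in T\Nodd$ coincide with $i_1,\dots,i_{2m}$. Finally, the quotient condition \eqref{eq:quotient} on the $a_{i_j}$ is literally the hypothesis of the present lemma, so it transfers without change.

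Combining these three observations, all hypotheses of \cref{lem:multistepcond} are satisfied with $\omega = 2\pi/T = k\pi/(2q)$, and the conclusion follows. There is no real obstacle here; the only mild subtlety is the precise definition of $\gcd$ for the real numbers $q_i$, which is resolved once one passes to a common rational denominator as above.
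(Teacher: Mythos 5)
Your proposal is correct and follows essentially the same route as the paper: verify directly that $4q_i/T = k\,q_i/q \in \N$, note that for $k\in\Nodd$ the parity condition $4q_i\in T\Nodd$ holds exactly for the indices with $q_i/q\in\Nodd$, i.e.\ for $i_1,\dots,i_{2m}$, and observe that the quotient condition is literally the hypothesis. Your extra care in justifying that the real-valued $\gcd$ is well defined (via a common rational denominator, with $\gcd$ of the resulting integers equal to $1$) is a harmless addition the paper leaves implicit.
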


\begin{remark}
    One can check that the conditions of Lemma~\ref{lem:sufficient} are not only sufficient but also necessary for Lemma~\ref{lem:multistepcond}.
\end{remark}

\begin{proof}[Proof of \cref{lem:sufficient}]
	The condition $4 q_i \in T \N$ for all $\N$ means that $\frac{T}{4}$ is a common divisor of the $q_i$, and is therefore equivalent to $\frac{T}{4} = \frac{q}{k}$ for some $k \in \N$. We now have to check condition \eqref{eq:quotient}, where we have
	\begin{align*}
		4 q_i \in T \Nodd
		\iff k \frac{q_i}{q} \in \Nodd 
		\iff k \in \Nodd \text{ and } \frac{q_i}{q} \in \Nodd.
	\end{align*}
	Thus, for even $k$ we have $m = 0$ in \cref{lem:multistepcond} so that \eqref{eq:quotient} is false, whereas for odd $k$ the indices $i_1, \dots, i_{2m}$ of \cref{lem:multistepcond} coincide with the indices of the current lemma, so that \eqref{eq:quotient} holds by assumption.
\end{proof}

\begin{remark}[Two-step and three-step potentials]
	Let $N=2$ and assume that $a_1, a_2>0$, $0=\theta_0<\theta_1<\theta_2=1$ satisfy the assumptions from Lemma~\ref{lem:sufficient}. This means that $\frac{q_2}{q_1}= \frac{r}{s}$ with $r,s\in \Nodd$ coprime and $T\in \frac{4q}{\Nodd}$ with the greatest common divisor $q= \frac{q_1}{s}=\frac{q_2}{r}$. These conditions coincide with those stated in \cite{kohler_reichel} on admissible ranges of breather frequencies and material parameters for the case $X=2 \pi$. 
	
	In the case $N=3$, assuming $a_1,a_2,a_3 >0$ and $0=\theta_0 < \theta_1 < \theta_2 < \theta_3=1$ satisfying the conditions from Lemma \ref{lem:sufficient} means (up to a permutation of the $q_i$) that $\frac{q_2}{q_1}=\frac{r}{s}$ with $r,s \in \Nodd$ coprime, $\frac{q_3}{q_1}=\frac{\tilde{r}}{\tilde{s}}$ with $\tilde{r} \in \Neven$, $\tilde{s} \in \Nodd$ coprime and $a_1 \neq a_2$. Further, the greatest common divisor is given by $q=\gcd(q_1,q_2,q_3)=\frac{q_1}{\lcm(s, \tilde{s})}$ where $\lcm$ denotes the least common multiple.
\end{remark}

\subsection{Perturbed periodic case}
Next, we want to analyze the spectrum of $L$ when $V$ is a perturbed periodic potential and focus on two different cases. 

\subsubsection{Interface of dislocated periodic potentials}

Let $V_\per$ be given by \eqref{eq:multistepV} and for $V_0, d >0$ consider 
\begin{align*}
    V(x)=\begin{cases}
        V_\per(x),  & x <0,\\
        V_0,  & 0 \leq x < d, \\
        V_\per(x-d),  & d \leq x . \\
    \end{cases}
\end{align*}
We observe that our assumptions \ref{ass:bounded} and \ref{ass:periodic_infinity} hold by definition.

\begin{lemma}
    Assume that $V_\per$ satisfies the assumptions of Lemma~\ref{lem:multistepcond} and that for the same value of $T$ we have $4q_0\in T\Neven$ with $q_0:= \sqrt{V_0}d$. Then \ref{ass:spectrum} and \ref{ass:point_spectrum} hold. 
\end{lemma}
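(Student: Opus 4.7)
The plan is to combine Lemma~\ref{lem:spec} with a direct phase-plane analysis exploiting the arithmetic condition $4q_0\in T\Neven$. Since the asymptotic periodic profiles on both sides are the same function $V_\per$, Lemma~\ref{lem:spec}(i) gives $\sigma_\ess(L)=\sigma_\ess(-\tfrac{1}{V_\per(x)}\dxsquare)$, so the $\sqrt{\sigma_\ess(L)}$-part of \ref{ass:spectrum} is inherited verbatim from the hypothesis that $V_\per$ satisfies Lemma~\ref{lem:multistepcond} with the same $T$. What remains is to control the point spectrum.

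First I would show that no $k^2\omega^2$ with $k\in\Nodd$ is an eigenvalue. Suppose $\phi\in L^2_V(\R)$ is an eigenfunction at $\lambda=k^2\omega^2$ and work in the weighted phase-space coordinates $\Phi(x)=(\sqrt{\lambda}\phi(x),\phi'(x))^\top$ of \eqref{eq:prop_matrix}. By \eqref{eq:structure_prop_per}, the periodic monodromy satisfies $P(k^2\omega^2;V_\per)=\pm\mathrm{diag}(\sqrt{\alpha},1/\sqrt{\alpha})$ with $\alpha\neq 1$; assume $\alpha>1$ without loss of generality. The $L^2$-decay of $\phi$ as $x\to-\infty$ forces $\Phi(0^-)\in\lspan\{e_1\}$ (the unstable axis), so $\phi'(0)=0$; symmetrically, decay as $x\to+\infty$ forces $\Phi(d^+)\in\lspan\{e_2\}$ (the stable axis), so $\phi(d)=0$. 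On the slab $[0,d]$ the propagator is
\[
P_0(\lambda)=\begin{pmatrix}\cos(\sqrt{\lambda}q_0)&\tfrac{1}{\sqrt{V_0}}\sin(\sqrt{\lambda}q_0)\\ -\sqrt{V_0}\sin(\sqrt{\lambda}q_0)&\cos(\sqrt{\lambda}q_0)\end{pmatrix},
\]
and $4q_0\in T\Neven$ forces $k\omega q_0\in\pi\Z$, whence $P_0(k^2\omega^2)=\pm\id$. Continuity across $x=0$ and $x=d$ then gives $\Phi(d)=\pm\Phi(0)\in\lspan\{e_1\}\cap\lspan\{e_2\}=\{0\}$, so $\phi\equiv 0$ by uniqueness of the initial value problem, contradicting eigenfunctionality.

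Next I would upgrade this to uniform separation and quadratic growth using $4\omega$-periodicity in $\sqrt{\lambda}$. Both $P(\lambda;V_\per)$ (by the argument inside Lemma~\ref{lem:multistepcond}) and $P_0(\lambda)$ (because $4\omega q_0=2\pi n\in 2\pi\Z$) are $4\omega$-periodic as functions of $\sqrt{\lambda}$. Inside any spectral gap the expanding and contracting eigenvectors $v_-(\lambda),v_+(\lambda)$ of $P(\lambda;V_\per)$ are locally analytic in $\lambda$ by an implicit function argument as in Lemma~\ref{lem:rho}, and they inherit the $4\omega$-periodicity. Consequently the Evans-type function
\[
D(\lambda)\coloneqq\det\!\bigl[\,P_0(\lambda)\,v_-(\lambda)\;\big|\;v_+(\lambda)\,\bigr],
\]
whose zeros on the gaps characterise $\sigma_p(L)$, is continuous and $4\omega$-periodic in $\sqrt{\lambda}$. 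The previous step shows $D\neq 0$ at $\sqrt{\lambda}\in\{\omega,3\omega\}$—the only representatives of $\{k\omega:k\in\Nodd\}$ modulo $4\omega$—so continuity and periodicity deliver constants $\delta,c>0$ with $|D|\geq c$ on a $\delta$-tube around $\{k\omega:k\in\Nodd\}$ in $\sqrt{\lambda}$, which is the remaining half of \ref{ass:spectrum}. Analyticity of $D$ on each gap together with $4\omega$-periodicity yields only finitely many zeros per $\sqrt{\lambda}$-period, hence $\sqrt{\lambda_n}\gtrsim n$ and $\sum_n\lambda_n^{-r}<\infty$ for all $r>\tfrac12$, which is \ref{ass:point_spectrum}.

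The main obstacle is the middle paragraph: correctly translating the $L^2$-decay conditions at $\pm\infty$ into the linear-algebraic constraints $\phi'(0)=0$ and $\phi(d)=0$ in weighted coordinates, and recognising that $4q_0\in T\Neven$ collapses the slab propagator to $\pm\id$ at each $\lambda=k^2\omega^2$. Once this rigid picture is in place, the uniformity claim is essentially automatic from the shared $4\omega$-periodicity of the periodic monodromy and the slab propagator.
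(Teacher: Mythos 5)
Your argument is, in substance, the paper's own: the essential spectrum is dispatched via Lemma~\ref{lem:spec}(a) together with Lemma~\ref{lem:multistepcond}; the exclusion $k^2\omega^2\notin\sigma_p(L)$ for $k\in\Nodd$ comes from the mismatch of the stable/unstable eigenvectors of $P(k^2\omega^2;V_\per)=\pm\mathrm{diag}(\sqrt{\alpha},1/\sqrt{\alpha})$ combined with $P_0(k^2\omega^2)=\pm\Id$ (your translation of the decay conditions into $\phi'(0)=0$ and $\phi(d)=0$ is exactly the paper's condition $P_0(\lambda)\tilde v(\lambda)\in\R v(\lambda)$ specialized to $\lambda=k^2\omega^2$); and the uniformity in \ref{ass:spectrum} as well as \ref{ass:point_spectrum} are obtained from the $4\omega$-periodicity in $\sqrt{\lambda}$ of the matching condition, which is what the paper uses — your Evans-type determinant $D$ and the compactness/continuity argument on a tube around $\{\omega,3\omega\}$ just make explicit a step the paper leaves implicit. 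The one soft spot is the claim that analyticity of $D$ on each gap plus periodicity gives finitely many zeros per $\sqrt{\lambda}$-period: analyticity on an open gap does not by itself exclude zeros accumulating at a band edge, so you should instead invoke Lemma~\ref{lem:spec}(b),(c) (band edges are not eigenvalues, each gap contains finitely many eigenvalues), as the paper implicitly does; with that citation the counting, hence $\sqrt{\lambda_n}\gtrsim n$ and \ref{ass:point_spectrum}, is complete.
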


We state the proof together with the proof of Lemma \ref{lem:interface} since they are based on the same idea.

\subsubsection{Interface of periodic potentials}

Let $V_\per^+$ and $V_\per^-$ be different periodic potentials given by \eqref{eq:multistepV}.
Consider now the case
\begin{align*}
    V(x) = \begin{cases}
        V^-_\per(x) , & x<0 , \\
        V^+_\per(x) , & x\geq 0 .
    \end{cases}
\end{align*}
Note that assumptions \ref{ass:bounded} and \ref{ass:periodic_infinity} hold by definition.

\begin{lemma}\label{lem:interface} 
    Assume that $V^\pm_\per$ both satisfy the assumptions of Lemma~\ref{lem:multistepcond} for the same value of $T$ with values $\alpha^\pm$ from \eqref{eq:quotient}. If additionally $\alpha^+, \alpha^- > 1$ or $\alpha^+, \alpha^- < 1$ then \ref{ass:spectrum} and \ref{ass:point_spectrum} hold.
\end{lemma}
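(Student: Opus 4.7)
My strategy splits into verifying (A3) and (A4); assumptions (A1) and (A2) are immediate from the definition of $V$.

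For the essential spectrum part of (A3), Lemma~\ref{lem:spec}(1) together with the hypothesis that both $V^\pm_\per$ satisfy Lemma~\ref{lem:multistepcond} for the same $T$ yields a uniform $\sqrt\lambda$-distance from $\sigma_\ess(L)$ to $\{k\omega:k\in\Nodd\}$ with $\omega=2\pi/T$. For the point spectrum, Lemma~\ref{lem:spec}(2) places all eigenvalues of $L$ in the gaps of $\sigma_\ess(L)$, and I need to rule out eigenvalues close to any $k\omega$, $k\in\Nodd$.

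A value $\lambda$ inside a gap is an eigenvalue of $L$ precisely when the $L^2$-decaying solutions $\phi^\pm$ of $-u''=\lambda V u$ are linearly dependent, equivalently when the decaying eigenvectors $w^\pm(\lambda)$ of the weighted monodromies $P^\pm(\lambda;V^\pm_\per)$ (in the $(\sqrt\lambda u,u')$-basis of Lemma~\ref{lem:multistepcond}) satisfy $\det(w^+(\lambda),w^-(\lambda))=0$. Applying \eqref{eq:structure_prop_per} at $\lambda=k^2\omega^2$ gives $P^\pm(k^2\omega^2;V^\pm_\per)=\pm\operatorname{diag}(\sqrt{\alpha^\pm},1/\sqrt{\alpha^\pm})$. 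Decay at $+\infty$ requires the eigenvalue of modulus $<1$, so $w^+(k^2\omega^2)=(0,1)^\top$ if $\alpha^+>1$ and $(1,0)^\top$ if $\alpha^+<1$; decay at $-\infty$ requires the forward eigenvalue of modulus $>1$, so the roles swap and $w^-(k^2\omega^2)=(1,0)^\top$ if $\alpha^->1$ and $(0,1)^\top$ if $\alpha^-<1$. Under the hypothesis ($\alpha^+,\alpha^->1$ or $\alpha^+,\alpha^-<1$) we obtain $\det(w^+,w^-)=\pm1\neq0$ at every $\lambda=k^2\omega^2$, so these values are not eigenvalues of $L$.

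For the uniform version, the key observation is that the monodromies $P^\pm(\lambda)$ --- and therefore, with a consistent normalization on each spectral gap, their decaying eigenvectors $w^\pm(\lambda)$ --- are $4\omega$-periodic in $\sqrt\lambda$, a fact already exploited in the proof of Lemma~\ref{lem:multistepcond}. Hence $\det(w^+(\lambda),w^-(\lambda))$ is a continuous $4\omega$-periodic function of $\sqrt\lambda$ on the gaps of $\sigma_\ess(L)$; its nonvanishing at the two residues $\omega,3\omega\pmod{4\omega}$ yields a fixed $\sqrt\lambda$-neighborhood of every $k\omega$, $k\in\Nodd$, free of zeros, hence free of eigenvalues of $L$.

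Condition (A4) follows from Lemma~\ref{lem:spec}(3) combined with the classical uniform bound (at most one per gap) on the number of eigenvalues of a half-line periodic Sturm-Liouville operator in each of its spectral gaps. Since the proof of Lemma~\ref{lem:spec}(3) gives at most two more eigenvalues per gap for $L$ than for $L^-\oplus L^+$, this yields an integer $N_0$, independent of the gap, bounding the number of eigenvalues of $L$ per gap. Since the gaps of $\sigma_\ess(L)$ are localized in bounded $\sqrt\lambda$-intervals around $k\omega$ for $k\in\Nodd$ (by the $\sqrt\lambda$-periodicity of $P^\pm$), enumerating eigenvalues in increasing order gives $\lambda_n\gtrsim n^2\omega^2$, whence $\sum_{\lambda\in\sigma_p(L)}\lambda^{-r}<\infty$ for $r>\tfrac12$.

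The main obstacle is the uniform step in (A3): ensuring that the decaying eigenvectors $w^\pm(\lambda)$ of the weighted monodromies can be normalized so that $\det(w^+,w^-)$ becomes a genuinely continuous $4\omega$-periodic function of $\sqrt\lambda$ across all spectral gaps. This requires some care about branch choices for the eigenvectors as $\sqrt\lambda$ traverses gaps and band edges.
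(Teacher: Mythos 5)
Your proposal is correct, and its core coincides with the paper's proof: you characterize eigenvalues in spectral gaps by linear dependence of the decaying Cauchy data at $x=0$, identify these data with the contracting eigenvector of $P(\lambda;V^+_\per)$ and the expanding eigenvector of $P(\lambda;V^-_\per)$ in the $(\sqrt{\lambda}u,u')$-coordinates, and evaluate at $\lambda=k^2\omega^2$ using \eqref{eq:structure_prop_per} to see that the two directions are transversal exactly when $\alpha^+,\alpha^-$ lie on the same side of $1$ --- this is precisely the paper's argument for excluding $k^2\omega^2$ from $\sigma_p(L)$. Where you differ is in how you upgrade this to \ref{ass:spectrum} and \ref{ass:point_spectrum}: the paper gets both in one stroke from the $4\omega$-periodicity (in $\sqrt{\lambda}$) of the normalization-independent eigenvalue condition $\tilde v^-(\lambda)\in\R v^+(\lambda)$, which makes $\sqrt{\sigma_p(L)}$ a $4\omega$-periodic set with finitely many points per period (finiteness per gap coming from \cref{lem:spec}), so that \ref{ass:point_spectrum} is automatic and \ref{ass:spectrum} reduces to the two residues $\omega,3\omega$. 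Your route instead imports the classical fact that a half-line periodic Sturm--Liouville operator has at most one eigenvalue per gap and combines it with the rank-two resolvent bound from the proof of \cref{lem:spec}; this works, but note two points. First, your statement that the gaps of $\sigma_\ess(L)$ are ``localized around $k\omega$, $k\in\Nodd$'' is not accurate and not needed: what you actually use is that the band edges, being zeros of the $4\omega$-periodic analytic functions $\tr P(\lambda;V^\pm_\per)\mp2$ of $\sqrt{\lambda}$, give only finitely many gaps per $\sqrt{\lambda}$-period, which together with the uniform per-gap bound yields $\sqrt{\lambda_n}\gtrsim n$ and hence \ref{ass:point_spectrum}. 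Second, the ``main obstacle'' you flag (a globally continuous periodic normalization of $w^\pm$ across all gaps) is not actually required: the dependence locus $\R w^+(\lambda)=\R w^-(\lambda)$ is normalization-free and $4\omega$-periodic, and for the uniform gap in \ref{ass:spectrum} you only need continuity of the eigendirections near $\omega,3\omega$ modulo $4\omega$, where $\abs{\tr P(\lambda;V^\pm_\per)}>2$ guarantees simple eigenvalues; so either your local continuity argument or the paper's periodicity argument closes this step without any branch issues.
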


\begin{proof}
Observe first that according to Lemma~\ref{lem:spec} and Lemma~\ref{lem:multistepcond} we know that \ref{ass:spectrum} holds for the essential spectrum of $L$ and hence it remains to verify \ref{ass:spectrum} and \ref{ass:point_spectrum} for the eigenvalues of
\begin{align*}
    -u''= \lambda V(x) u \text{ for } x \in \R .
\end{align*}

In the following we use from Lemma~\ref{lem:multistepcond} that the square root of the spectrum of $L_\per$ is periodic with period $4\omega=\frac{8\pi}{T}$.

\emph{Interface of dislocated periodic potentials.}
In analogy to \eqref{eq:prop_matrix} we define the weighted propagation matrix $P_0(\lambda)$ for solutions of  $ - u''= \lambda V_0 u $ on $[0,d]$. It takes the form as in \eqref{eq:propmatrix_2} and hence, as a function of $\sqrt{\lambda}$, it is periodic with period $\frac{2\pi}{q_0}$. By the assumption $4q_0\in T\Neven$ it is co-periodic to the propagation matrix $P(\lambda;V_\per)$ of the periodic potential $V_\per$, which has period $4\omega=\frac{8\pi}{T}$.  

Let us consider a value $\lambda \in \sigma_p(L)$. 
By \cref{lem:spec} we have $\lambda\not \in \sigma_\ess(L)$ and hence $|\tr P(\lambda;V_\per)|>2$. Then, $P(\lambda;V_\per)$ has two distinct real eigenvalues $\rho(\lambda), \tilde \rho(\lambda)$ with $|\rho(\lambda)|<1<|\tilde \rho(\lambda)|$ and corresponding eigenvectors $v(\lambda), \tilde v(\lambda)\in \R^2$. 
We note that $\rho(\lambda), \tilde\rho(\lambda), \R v(\lambda), \R\tilde v(\lambda)$ are $4\omega$-periodic as functions of $\sqrt{\lambda}$ (inside the resolvent set of $L_\per$). 
If $\lambda \in \sigma_p(L)$ is an eigenvalue with $L^2(\R)$-eigenfunction $\phi$, we necessarily have $(\sqrt{\lambda} \phi(0), \phi'(0)) \in \R \tilde v(\lambda)$ and $(\sqrt{\lambda} \phi(d), \phi'(d)) \in \R v(\lambda)$ and therefore $P_0(\lambda) \tilde v(\lambda) \in \R v(\lambda)$.
Since this condition is also $4\omega$-periodic as a function of $\sqrt{\lambda}$ we see that $\sigma_p(L)$ has the same finite number of eigenvalues in every interval of length $4\omega$. This already implies \ref{ass:point_spectrum}. 
For \ref{ass:spectrum} we only need to check that $k^2\omega^2\not\in \sigma_p(L)$ for $k\in \Nodd$. From \eqref{eq:structure_prop_per} we see by the structure of the propagation matrices that 
$$
\alpha>1 \implies v(k^2\omega^2)= \begin{pmatrix}
    0 \\1 
\end{pmatrix}, \quad \tilde v(k^2\omega^2)= \begin{pmatrix}
    1 \\0 
\end{pmatrix} 
$$
and 
$$
\alpha<1 \implies v(k^2\omega^2)= \begin{pmatrix}
    1 \\0 
\end{pmatrix}, \quad \tilde v(k^2\omega^2)= \begin{pmatrix}
    0 \\1 
\end{pmatrix} 
$$
up to rescaling of the eigenvectors.
Since $P_0(k^2\omega^2)=\pm \Id$, we get $k^2\omega^2 \not \in \sigma_p(L)$ and hence also \ref{ass:spectrum} holds.

\emph{Interface of periodic potentials.} The considerations are similar to the previous case. As before, we denote by $P(\lambda;V^\pm_\per)$ the propagation matrix for $V^\pm_\per$. By our assumption, both have the same period $4\omega= \frac{8\pi}{T}$. For $\lambda \not \in \sigma_\ess(L)$, the matrix $P(\lambda; V^\pm_\per)$ has eigenvalues $\rho^\pm(\lambda), \tilde \rho^\pm(\lambda)$ with eigenvectors $v^\pm(\lambda), \tilde v^\pm(\lambda)$.  The eigenpair $(\rho^+(\lambda), v^+(\lambda))$ generates a solution on $[0,\infty)$ decaying to $0$ at $+\infty$ and $(\tilde \rho^-(\lambda),\tilde v^-(\lambda))$ generates a solution on $(-\infty,0]$ decaying to $0$ at $-\infty$. Therefore, the eigenvalue condition is given by 
$$
\lambda\in \sigma_p(L)
\iff
\tilde v^-(\lambda) \in \R v^+(\lambda).
$$
Recall that at $\lambda=k^2\omega^2$ we have 
$$
\alpha^+>1 \implies v^+(k^2\omega^2)= \begin{pmatrix}
    0 \\1 
\end{pmatrix}, \quad \alpha^+<1 \implies v^+(k^2\omega^2)= \begin{pmatrix}
    1 \\0 
\end{pmatrix} 
$$
and 
$$
\alpha^->1 \implies \tilde v^-(k^2\omega^2)= \begin{pmatrix}
    1 \\0 
\end{pmatrix}, \quad \alpha^-<1 \implies \tilde v^-(k^2\omega^2)= \begin{pmatrix}
    0 \\1 
\end{pmatrix}. 
$$
Using our assumption that $\alpha^+, \alpha^- > 1$ or $\alpha^+, \alpha^- < 1$ we conclude $k^2\omega^2\not\in \sigma_p(L)$ and hence \ref{ass:spectrum} and \ref{ass:point_spectrum} holds as seen in the previous case.
\end{proof}

    \section*{Acknowledgments}
	Funded by the Deutsche Forschungsgemeinschaft (DFG, German Research Foundation) – Project-ID 258734477 – SFB 1173. We thank Xian Liao, Michael Plum, and Robert Wegner for discussions on Floquet-Bloch theory and scattering which helped in the development of \cref{sec:embeddings}.

	\printbibliography
\end{document}